\theoremstyle{plain}
\newtheorem{theorem}{Theorem}[section]
\newtheorem{lemma}[theorem]{Lemma}
\newtheorem{proposition}[theorem]{Proposition}
\newtheorem{corollary}[theorem]{Corollary}
\theoremstyle{definition}
\newtheorem{definition}[theorem]{Definition}
\theoremstyle{remark}
\newtheorem{remark}[theorem]{Remark}
\numberwithin{equation}{section}
\numberwithin{figure}{section}
\author{K\^az\i m \.Ilhan \.IKEDA and Erol SERBEST}
\title{Generalized Fesenko reciprocity map}
\date{}
\begin{document}

\maketitle

\begin{abstract}
In this paper, which is the natural continuation and generalization of
\cite{fesenko2000, fesenko2001, fesenko2005} and \cite{ikeda-serbest},
we extend the theory of Fesenko to infinite $APF$-Galois extensions $L$ 
over a local field $K$, with finite residue-class field $\kappa_K$ of $q=p^f$ 
elements, satisfying $\pmb{\mu}_p(K^{sep})\subset K$ and
$K\subset L\subset K_{\varphi^d}$ where the residue-class degree 
$[\kappa_L:\kappa_K]=d$. More precisely, for such extensions $L/K$,
fixing a Lubin-Tate splitting $\varphi$ over $K$, we construct a $1$-cocycle,
\begin{equation*}
\pmb{\Phi}_{L/K}^{(\varphi)}:\text{Gal}(L/K)\rightarrow 
K^\times/N_{L_0/K}L_0^\times\times U_{\widetilde{\mathbb X}(L/K)}^\diamond
/Y_{L/L_0},
\end{equation*} 
where $L_0=L\cap K^{nr}$, and study its functorial and ramification-theoretic
properties. The case $d=1$ recovers the theory of Fesenko. 

\noindent
\textbf{2000 Mathematics Subject Classification} : Primary 11S37

\noindent
\textbf{Keywords} : Local fields, higher-ramification theory, 
$APF$-extensions, 
Fontaine-Wintenberger field of norms, Fesenko reciprocity map, 
generalized Fesenko reciprocity map, non-abelian local class field theory.

\end{abstract}

\maketitle
Let $K$ be a local field (that is, a complete discrete valuation field)
with finite residue-class field $\kappa_K$ of $q=p^f$ elements. 
Assume that $\pmb{\mu}_p(K^{sep})\subset K$. Fix a Lubin-Tate splitting 
$\varphi$ over $K$ (cf. \cite{koch-deshalit}).
In \cite{fesenko2000, fesenko2001, fesenko2005}, Fesenko introduced a 
very general non-abelian local reciprocity map  
\begin{equation*}
\Phi_{L/K}^{(\varphi)}:\text{Gal}(L/K)\rightarrow 
U_{\widetilde{\mathbb X}(L/K)}^\diamond/Y_{L/K}
\end{equation*}
defined for any totally-ramified infinite $APF$-Galois extension $L/K$ 
satisfying $K\subset L\subset K_\varphi$, which generalizes the previous
non-abelian local class field theories of Koch-de Shalit \cite{koch-deshalit}
and Gurevich \cite{gurevich}. In \cite{ikeda-serbest}, we have studied the
basic functorial and ramification-theoretic properties of the reciprocity 
map of Fesenko.
 
In this paper, which is the natural continuation and generalization of
\cite{fesenko2000, fesenko2001, fesenko2005} and \cite{ikeda-serbest},
we extend the theory of Fesenko to infinite $APF$-Galois extensions $L/K$ 
satisfying $K\subset L\subset K_{\varphi^d}$ where the residue-class degree 
$[\kappa_L:\kappa_K]=d$. More precisely, for such extensions $L/K$,
we construct a $1$-cocycle,
\begin{equation*}
\pmb{\Phi}_{L/K}^{(\varphi)}:\text{Gal}(L/K)\rightarrow 
K^\times/N_{L_0/K}L_0^\times\times U_{\widetilde{\mathbb X}(L/K)}^\diamond
/Y_{L/L_0},
\end{equation*} 
where $L_0=L\cap K^{nr}$, and study its functorial and ramification-theoretic
properties. Note that, the case $d=1$ recovers the theory of Fesenko.

The organization of this paper is as follows. In the first section, we briefly
review the necessary background material from Fontaine-Wintenberger theory of 
fields of norms. In the second section, we introduce the generalized Fesenko
reciprocity map $\pmb{\Phi}_{L/K}^{(\varphi)}$ of an extension $L/K$, which
is an infinite $APF$-Galois extension satisfying $K\subset L\subset 
K_{\varphi^d}$ where the residue-class degree $[\kappa_L:\kappa_K]=d$, and
study its functorial and ramification-theoretic properties.

The material and results of this paper play a fundamental role in our 
construction of non-abelian local class field theory \cite{ikeda-serbest-2}, 
which generalizes Laubie theory \cite{laubie} as well. 

\subsection*{Acknowledgements}
The first named author (K. I. I) would like to thank to the Institut de
Math{\'e}matiques, ``Th{\'e}orie des Groupes, Repr{\'e}sentations et
Applications'', Universit{\'e} Paris 7, Jussieu, Paris, France, and to
the School of Pure Mathematics of the Tata Institute of Fundamental
Research, Mumbai, India, for their hospitality and support, where
some parts of this work has been completed.
The second named author (E. S.) would like to thank to T\"UBITAK for
a fellowship, and to the School of Mathematics of the University of Nottingham
for the hospitality and support, where some parts of this work have been
introduced.
Both of the authors would like to thank I. B. Fesenko for his
interest and encouragement at all stages of this work.

\subsection*{Notation}
All through this work, $K$ will denote a local field (a complete
discrete valuation field) with finite residue
field $O_K/\mathfrak p_K=:\kappa_K$ of $q_K=q=p^f$ elements with $p$ a
prime number, where $O_K$
denotes the ring of integers in $K$ with the unique maximal 
ideal $\mathfrak p_K$.
Let $\pmb{\nu}_K$ denote the corresponding normalized valuation on $K$
(normalized by $\pmb{\nu}_K(K^\times)=\mathbb Z$), and
$\widetilde{\pmb{\nu}}$ the unique extension of $\pmb{\nu}_K$ to a fixed
separable closure $K^{sep}$ of $K$. For any sub-extension $L/K$ of
$K^{sep}/K$, the normalized form of the
valuation $\widetilde{\pmb{\nu}}\mid_L$ on $L$ will be denoted by
$\pmb{\nu}_L$.
As usual, we let $K^{nr}$ to denote the maximal unramified extension in 
$K^{sep}$, and $\widetilde{K}$ denotes the completion of $K^{nr}$ with
respect to $\pmb{\nu}_{K^{nr}}$. Fix a \textit{Lubin-Tate splitting 
$\varphi_K=\varphi$ 
over $K$}. The fixed field of the Lubin-Tate splitting $\varphi$ is denoted
by $K_\varphi$. Finally, let 
$\left(\pi_E\right)_{K\subset E\subset K_\varphi}$ 
be the canonical sequence of norm-compatible prime elements in finite 
sub-extensions $E/K$ in $K_\varphi/K$. This determines a unique 
\textit{Lubin-Tate labelling over $K$} (cf. paragraph 0.2 of 
\cite{koch-deshalit}).

\section{Preliminaries on Fontaine-Wintenberger field of norms}
For a brief review of $APF$-extensions and Fontaine-Wintenberger field
of norms, we refer the reader to \cite{ikeda-serbest}, and for detailed
proofs to \cite{fontaine-wintenberger1979-a, fontaine-wintenberger1979-b,
wintenberger1983}.

Let $L/K$ be an infinite Galois arithmetically profinite (in short $APF$)
extension such that the residue-class degree $[\kappa_L : \kappa_K]=d$ and
$K\subset L\subset K_{\varphi^d}$; that is, in the terminology of
Koch-de Shalit in \cite{koch-deshalit} and Laubie in \cite{laubie}, 
$L$ is \textit{compatible with} 
$(T,\varphi)$, where $T$ denotes the intersection field
$L\cap K_\varphi$. Note that, $T/K$ is in general \textit{not} a normal
extension! 
Denote by $L_0^{(K)}=L\cap K^{nr}=K_d^{nr}$. If there is no fear of
confusion, denote $L_0^{(K)}$ simply by $L_0$. So, there is the following
diagram :
\begin{equation*}
\xymatrix{
{} & {} & {} & {} & {K^{sep}}\ar@{-}[d] \\
{} & {} & {} & {} & {K_{\varphi^d}}\ar@{-}[d] \\
{K^{nr}}\ar@{-}[uurrrr]\ar@{-}[d] & {} & L\ar@{-}[urr]\ar@{-}[d] & {} & 
{K_\varphi} \\
{K^{nr}_d}\ar@{-}[urr]\ar@{-}[d] & {} & T\ar@{-}[urr] & {} & {} \\
K\ar@{-}[urr] & {} & {} & {} & {} 
}
\end{equation*}
\begin{remark}
\label{key-remark}
Note that, $\varphi'=\varphi^d$ is a Lubin-Tate splitting over $L_0=K^{nr}_d$. 
Thus, by Proposition 1.2.3 of \cite{wintenberger1983} or by Lemma 3.3 of 
\cite{ikeda-serbest}, $L/L_0$ is an infinite totally-ramified $APF$-Galois 
extension satisfying
$L_0\subseteq L\subseteq (L_0)_{\varphi'}$. Thus, Fesenko theory, developed
in \cite{fesenko2000, fesenko2001, fesenko2005} and \cite{ikeda-serbest},
works for the extension $L/L_0$.
\end{remark}
Since $L/T$ is an unramified extension, we have the following lemma. 
\begin{lemma}
\label{unramified-L/L_0-over-T/K}
The field of norms $\mathbb X(L/L_0)$
is an unramified extension of the field of norms $\mathbb X(T/K)$. 
\end{lemma}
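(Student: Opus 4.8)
The plan is to exploit the functoriality of the field-of-norms construction under the tower $K \subset T \subset L$ together with the fact that $L_0/K$ is the maximal unramified subextension of $L/K$, so that $L/T$ and $L_0/K$ are "parallel" unramified extensions. First I would recall the basic functorial fact from Fontaine--Wintenberger theory (cf. the references in Section~1): for a tower $K \subset M \subset N$ with $N/K$ (hence $N/M$ and $M/K$) an $APF$-extension, $\mathbb X(N/M)$ is naturally a subfield of $\mathbb X(N/K)$, and $\mathbb X(N/K)/\mathbb X(N/M)$ carries the same ramification and residue data as $M/K$; in particular, unramifiedness of $M/K$ forces $\mathbb X(N/K)/\mathbb X(N/M)$ to be unramified with residue degree $[\kappa_M:\kappa_K]$. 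Applying this with $M = T$, $N = L$, I would identify $\mathbb X(L/K)$ as an unramified extension of $\mathbb X(L/T)$, and symmetrically, since $L/L_0$ is $APF$ (Remark~\ref{key-remark}) and $L_0/K$ is unramified of degree $d$, I would identify $\mathbb X(L/K)$ as an unramified extension of $\mathbb X(L/L_0)$ of degree $d$.

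Next I would pin down the relationship between $\mathbb X(L/T)$ and $\mathbb X(T/K)$: since $T = L \cap K_\varphi$ and $L_0 = L \cap K^{nr} = K_d^{nr}$, one has $L = T \cdot L_0$ with $T/K$ totally ramified and $L_0/K$ unramified of degree $d$, so $L/T$ is unramified of degree $d$ and $\mathbb X(L/T)$ is an unramified extension of $\mathbb X(T/K)$ of degree $d$ by the same functoriality principle (this is essentially the remark preceding the lemma, that $L/T$ is unramified). Combining the two identifications, inside $\mathbb X(L/K)$ we have both $\mathbb X(L/L_0)$ and $\mathbb X(L/T)$ as unramified extensions of the appropriate base fields of norms. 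The residue field of $\mathbb X(L/L_0)$ is $\kappa_L$ (as $L/L_0$ is totally ramified, the residue field of the field of norms is $\kappa_{L_0} = \kappa_L$), and likewise the residue field of $\mathbb X(T/K)$ is $\kappa_K$ while that of $\mathbb X(L/T)$ is $\kappa_L$.

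Then I would assemble the conclusion: $\mathbb X(L/L_0) \subset \mathbb X(L/K)$ and $\mathbb X(T/K) \subset \mathbb X(L/T) \subset \mathbb X(L/K)$, and I must check that $\mathbb X(L/L_0)$ is precisely an unramified extension of $\mathbb X(T/K)$ sitting inside $\mathbb X(L/K)$. Since $\mathbb X(L/L_0)$ and $\mathbb X(L/T)$ are both the fixed-field-of-norms of the totally ramified parts and differ only by the unramified twist coming from $L_0$ versus $T$ as base, and since $\mathbb X(T/K) \subset \mathbb X(L/L_0)$ because every norm-compatible system for $T/K$ extends to one for $L/L_0$ (the prime elements $\pi_E$ being norm-compatible through the tower), I would conclude that $\mathbb X(L/L_0)/\mathbb X(T/K)$ is an extension of complete discrete valuation fields with the same prime element (the uniformizer of $\mathbb X(T/K)$ remains a uniformizer in $\mathbb X(L/L_0)$, as $L/T$ and $L_0/K$ induce no extra ramification in the norm fields), hence unramified, with residue extension $\kappa_L/\kappa_K$ of degree $d$.

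The main obstacle I anticipate is bookkeeping the several fields of norms and their inclusions consistently — in particular verifying that the uniformizer of $\mathbb X(T/K)$ stays a uniformizer in $\mathbb X(L/L_0)$, i.e. that passing from base $K$ to base $L_0 = K_d^{nr}$ genuinely contributes only an unramified extension at the level of norm fields and does not disturb the valuation. This is where the hypothesis $L_0 = K_d^{nr}$ (unramified) and the compatibility of $L$ with $(T,\varphi)$ are essential, and where I would lean on Proposition~1.2.3 of \cite{wintenberger1983} (invoked in Remark~\ref{key-remark}) together with the transitivity/functoriality of the field-of-norms functor in towers; the residue-degree count $[\kappa_L:\kappa_K]=d$ then gives that the unramified extension $\mathbb X(L/L_0)/\mathbb X(T/K)$ has degree exactly $d$.
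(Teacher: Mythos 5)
Your proposal is built on a functoriality principle that is stated incorrectly, and the error propagates into the degree bookkeeping. For a tower $K\subset M\subset N$ with $M/K$ \emph{finite}, the field of norms does not change when the base is enlarged to $M$: the finite subextensions of $N/M$ are cofinal among those of $N/K$, so there is a canonical identification $\mathbb X(N/M)\xrightarrow{\sim}\mathbb X(N/K)$; the extension $\mathbb X(N/K)/\mathbb X(N/M)$ is trivial and does not ``carry the ramification and residue data of $M/K$''. In particular your assertion that $\mathbb X(L/K)$ is an unramified extension of $\mathbb X(L/L_0)$ of degree $d$ is false --- these two fields are identified, and this identification (degree $1$, not $d$) is precisely the first step of the paper's proof, quoting section (5.6) of Chapter III of \cite{fesenko-vostokov}. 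Your application with $M=T$ is also inconsistent with your own principle: $T/K$ is totally ramified, yet you conclude that $\mathbb X(L/K)/\mathbb X(L/T)$ is unramified; moreover $L/T$ is a \emph{finite} (unramified, degree $d$) extension, so $\mathbb X(L/T)$ is not a field-of-norms local field of the theory at all, and the chain $\mathbb X(T/K)\subset\mathbb X(L/T)\subset\mathbb X(L/K)$ around which you organize the argument is not meaningful. The correct functoriality goes the other way: for $N/M$ finite over an infinite $APF$ extension $M/K$, $\mathbb X(N/K)$ is an extension of $\mathbb X(M/K)$ of degree $[N:M]$ (Galois with group $\text{Gal}(N/M)$ when $N/M$ is Galois), i.e.\ it mirrors $N/M$, not $M/K$. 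Note also that your claims contradict each other: if $\mathbb X(L/K)/\mathbb X(L/L_0)$ were unramified of degree $d$, then, since you correctly note $\kappa_{\mathbb X(L/L_0)}=\kappa_{L_0}=\kappa_L$, the residue field of $\mathbb X(L/K)$ would be a degree-$d$ extension of $\kappa_L$, whereas it is $\kappa_L$ itself.

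The last paragraph of your proposal does contain the kernel of a correct argument, but it is exactly the part that needs the corrected scaffolding. The paper's route is: identify $\mathbb X(L/L_0)$ with $\mathbb X(L/K)$; use the (correct) functoriality to see that $\mathbb X(L/K)/\mathbb X(T/K)$ is Galois with group isomorphic to $\text{Gal}(L/T)$, hence of degree $[L:T]=d$; and compare residue fields, $\kappa_{\mathbb X(L/K)}\simeq\kappa_L$ and $\kappa_{\mathbb X(T/K)}\simeq\kappa_T=\kappa_K$, so that residue degree equals extension degree and the extension is unramified. Your alternative observation --- that the canonical prime $\Pi_{\varphi;T/K}=(\pi_E)$ stays a prime element after base change to $L_0$ because $\pi_{EL_0}=\pi_E$ for the splitting $\varphi'=\varphi^d$ over $L_0$, so that $e=1$ --- is essentially the content of Lemma \ref{lemma-prime-equality}, which the paper proves \emph{after} (and using) the present lemma; it can be made into a proof, but only once the embedding $\mathbb X(T/K)\hookrightarrow\mathbb X(L/L_0)$ and the finiteness of the extension are justified by the functoriality above, not by the false principle of your first paragraph. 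As written, the wrong general statement and the erroneous degree claims constitute a genuine gap.
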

\begin{proof}
In fact, there exists a natural isomorphism 
$\mathbb X(L/L_0)\xrightarrow{\sim}\mathbb X(L/K)$
which identifies $\mathbb X(L/L_0)$ and $\mathbb X(L/K)$ (cf. section (5.6) of
Chapter III of \cite{fesenko-vostokov}). 
Now, $\mathbb X(L/K)$ is a Galois extension of $\mathbb X(T/K)$ with 
corresponding Galois group isomorphic to $\text{Gal}(L/T)$ (cf. 
\cite{ikeda-serbest} and \cite{wintenberger1983}).
As $\kappa_{\mathbb X(L/K)}\simeq\kappa_L$ and 
$\kappa_{\mathbb X(T/K)}\simeq\kappa_T$, it follows that
\begin{equation*}
[\kappa_{\mathbb X(L/K)}:\kappa_{\mathbb X(T/K)}]=
[\mathbb X(L/K):\mathbb X(T/K)],
\end{equation*} 
as $L/T$ is an unramified extension, which proves that $\mathbb X(L/K)$ is an
unramified extension of $\mathbb X(T/K)$.
\end{proof}
Now, as the Lubin-Tate splitting $\varphi$ over $K$ is fixed, the
unique element $\Pi_{\varphi;T/K}=(\pi_E)_{K\subset E\subset T}
\in\mathbb X(T/K)$ is a canonical prime element of the local field 
$\mathbb X(T/K)$. Thus, in view of Lemma 
\ref{unramified-L/L_0-over-T/K}, $\Pi_{\varphi;T/K}$ is a prime element of 
$\mathbb X(L/L_0)$ as well. Moreover,
\begin{lemma}
\label{lemma-prime-equality}
\begin{equation}
\label{prime-equality}
\Pi_{\varphi;T/K} = \Pi_{\varphi';L/L_0} .
\end{equation}
\end{lemma}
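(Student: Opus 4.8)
The plan is to unwind the definitions of both prime elements as norm-compatible sequences and check that they coincide coordinate-by-coordinate. Recall that $\Pi_{\varphi;T/K} = (\pi_E)_{K \subset E \subset T}$ is the canonical norm-compatible sequence of Lubin-Tate prime elements attached to the fixed splitting $\varphi$ over $K$, the indexing running over the finite sub-extensions $E/K$ inside $T = L \cap K_\varphi \subseteq K_\varphi$. On the other side, by Remark \ref{key-remark} the extension $L/L_0$ is an infinite totally-ramified $APF$-Galois extension with $L_0 \subseteq L \subseteq (L_0)_{\varphi'}$ where $\varphi' = \varphi^d$ is a Lubin-Tate splitting over $L_0 = K^{nr}_d$; so there is a canonical norm-compatible sequence of Lubin-Tate prime elements $\Pi_{\varphi';L/L_0} = (\pi'_{E'})_{L_0 \subset E' \subset L}$ attached to $\varphi'$ over $L_0$, indexed by the finite sub-extensions $E'/L_0$ inside $L$, which by Fesenko's theory (applied to $L/L_0$ as in Remark \ref{key-remark}) is a prime element of $\mathbb X(L/L_0)$.

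First I would pin down the identification $\mathbb X(T/K) \hookrightarrow \mathbb X(L/L_0) \xrightarrow{\sim} \mathbb X(L/K)$ used in Lemma \ref{unramified-L/L_0-over-T/K}: a finite sub-extension $E/K$ in $T/K$ corresponds to $E L_0/L_0$ in $L/L_0$, and the ``norm-coordinate at $E$'' of an element of $\mathbb X(L/K)$ matches the coordinate at $E L_0$, because the local ring of $\mathbb X(L/L_0)$ is generated over the subring corresponding to $\mathbb X(T/K)$ by the (unramified) residue extension only — prime elements are unaffected. So it suffices to compare, for each finite $E/K$ in $T/K$, the Lubin-Tate prime element $\pi_E$ of $E$ coming from the splitting $\varphi$ over $K$ with the Lubin-Tate prime element $\pi'_{EL_0}$ of $EL_0$ coming from the splitting $\varphi' = \varphi^d$ over $L_0$.

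The crux is therefore a \emph{compatibility of Lubin-Tate labellings under unramified base change}: if $\varphi$ is a Lubin-Tate splitting over $K$ with associated norm-coherent system of prime elements $(\pi_E)_{E \subset K_\varphi}$, and $L_0/K$ is the unramified extension of degree $d$, then $\varphi^d$ — which is visibly a Frobenius-type splitting over $L_0$ since $\kappa_{L_0} = \mathbb F_{q^d}$ — has as its associated prime elements precisely $(\pi_E)$ viewed in $E L_0$; equivalently $EL_0 = E \cdot K^{nr}_d$ and $\pi_{EL_0}^{(\varphi')} = \pi_E^{(\varphi)}$. This is exactly the content of the Koch–de Shalit formalism (paragraph 0.2 of \cite{koch-deshalit}): a Lubin-Tate splitting and its labelling are determined by the choice of prime element $\pi_K$ together with the Frobenius, and passing from $(K,\varphi)$ to $(L_0,\varphi^d)$ replaces $q$ by $q^d$ while keeping the \emph{same} prime element $\pi_K$ and the \emph{same} tower of fields $K_\varphi = (L_0)_{\varphi^d}$; by uniqueness of the norm-coherent labelling, the two sequences agree. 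I expect this verification — keeping careful track of which finite layers are being compared and invoking the uniqueness in Koch–de Shalit — to be the only real point; the rest is the bookkeeping of the field-of-norms identification from Lemma \ref{unramified-L/L_0-over-T/K}. Once $\pi_E = \pi'_{EL_0}$ for all $E$, the two sequences $(\pi_E)_E$ and $(\pi'_{E'})_{E'}$ are the same element of $\mathbb X(L/K) = \mathbb X(L/L_0)$, giving \eqref{prime-equality}.
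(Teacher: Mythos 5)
Your proposal follows essentially the same route as the paper: identify the coordinates of $\mathbb X(T/K)$ inside $\mathbb X(L/L_0)$ via $E\leftrightarrow EL_0$, and then invoke the unramified compatibility of the Lubin--Tate labelling, i.e. $\pi_{EL_0}^{(\varphi')}=\pi_E^{(\varphi)}$ because $EL_0/E$ is unramified --- which is exactly the one-line justification the paper gives, resting on paragraph 0.2 of \cite{koch-deshalit}. One slip worth correcting: $(L_0)_{\varphi^d}$ is the fixed field $K_{\varphi^d}$, not $K_\varphi$ (they differ as soon as $d>1$); but all your argument actually needs is that each $EL_0$ with $E\subset T\subset K_\varphi$ lies in $K_{\varphi^d}$ and that the $(L_0,\varphi^d)$-labelling assigns to it the prime $\pi_E$, so this misidentification does not affect the proof.
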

\begin{proof}
In fact, for the Lubin-Tate splitting $\varphi'=\varphi^d$ over $L_0=K_d^{nr}$,
there exists a unique element 
$(\pi_{L_0E})_{L_0\subset L_0E\subset L_0T=L}\in\mathbb X(L/L_0)$. As $L_0E/E$
is an unramified extension, it follows that $\pi_{L_0E}=\pi_E$ for each
$K\subset E\subset T$. Thus, eq. (\ref{prime-equality}) follows.
\end{proof}
The completion $\widetilde{\mathbb X}(L/K)$ of the maximal unramified extension
$\mathbb X(L/K)^{nr}$ of the field of norms $\mathbb X(L/K)$ is identified
with the field of norms $\mathbb X(\widetilde{L}/\widetilde{K})
=\mathbb X(\widetilde{L}/\widetilde{L_0})$.

\section{Generalized Fesenko reciprocity map}
The main references for this section are \cite{fesenko2000,
fesenko2001, fesenko2005} and \cite{ikeda-serbest}.

Fix a Lubin-Tate splitting $\varphi_K=\varphi$ over $K$.
The aim of this section is to generalize the reciprocity map 
$\Phi_{M/K}^{(\varphi)}$ of Fesenko, cf. \cite{fesenko2000,
fesenko2001, fesenko2005} and \cite{ikeda-serbest}, defined for
infinite totally-ramified $APF$-Galois extensions $M/K$ satisfying 
$K\subset M\subset K_\varphi$ to infinite $APF$-Galois extensions $L/K$
with residue-class degree $[\kappa_L:\kappa_K]=d$ and satisfying
$K\subset L\subset K_{\varphi^d}$.
We shall keep the notation introduced in \cite{ikeda-serbest} and introduced 
in the previous section in what follows. 

Recall that, for the extension $M/K$ as above, the diamond subgroup 
$U_{\widetilde{\mathbb X}(M/K)}^\diamond$ of the group 
$U_{\widetilde{\mathbb X}(M/K)}$ of units of the ring of integers of
$\widetilde{\mathbb X}(M/K)$ is defined by
\begin{equation*}
U_{\widetilde{\mathbb X}(M/K)}^\diamond = \text{Pr}_{\widetilde{K}}^{-1}
\left(U_K\right),
\end{equation*}
where $\text{Pr}_{\widetilde{K}}:U_{\widetilde{\mathbb X}(M/K)}
\rightarrow U_{\widetilde{K}}$ denotes the projection map on the 
$\widetilde{K}$-coordinate of $U_{\widetilde{\mathbb X}(M/K)}$.
More generally, for a given infinite $APF$-Galois extension $L/K$ with
residue-class degree $[\kappa_L : \kappa_K]=d$ and satisfying 
$K\subset L\subset K_{\varphi^d}$, define the diamond subgroup 
$U_{\widetilde{\mathbb X}(L/K)}^\diamond$ of the group 
$U_{\widetilde{\mathbb X}(L/K)}$ of units of the ring of integers of 
$\widetilde{\mathbb X}(L/K)= \widetilde{\mathbb X}(L/L_0)$ naturally 
as follows. 
\begin{definition}
\label{diamond-group-L/K}
$U_{\widetilde{\mathbb X}(L/K)}^\diamond$ is the subgroup
of the group $U_{\widetilde{\mathbb X}(L/K)}$ of units of the ring of integers
of the local field $\widetilde{\mathbb X}(L/K)$ whose 
$\widetilde{K}=\widetilde{L}_0$-coordinate belongs to $U_{L_0}$. That is,
\begin{equation*}
U_{\widetilde{\mathbb X}(L/K)}^\diamond =
U_{\widetilde{\mathbb X}(L/L_0)}^\diamond .
\end{equation*}  
\end{definition}
Now, as a first step, we shall generalize the arrow $\phi_{M/K}^{(\varphi)}$
defined for the extensions $M/K$, where $M/K$ is a totally-ramified
$APF$-Galois extension satisfying $K\subset M\subset K_\varphi$, 
of Fesenko theory,
which has been described in \cite{fesenko2000, fesenko2001, fesenko2005}
and in detail in Section 5 of \cite{ikeda-serbest}, to infinite $APF$-Galois 
extensions $L$ of $K$ with residue-class degree $d$ and satisfying 
$K^{nr}_{d}\subset L\subset K_{\varphi^d}$ and
construct the \textit{generalized arrow ${\pmb\phi}_{L/K}^{(\varphi)}$ 
for such extensions $L/K$} as follows.
There exists an isomorphism
\begin{equation}
\label{directproduct}
\text{Gal}(L/K)\xrightarrow{\sim}\text{Gal}(L_0/K)\times\text{Gal}(L/L_0)
\end{equation}
defined by
\begin{equation}
\label{directproduct-definition}
\sigma\mapsto (\sigma\mid_{L_0},\varphi^{-m}\sigma),
\end{equation}
for every $\sigma\in\text{Gal}(L/K)$, where $\sigma\mid_{L_0}=\varphi^m$ for
some $0\leq m\in\mathbb Z$. 
\begin{remark}
\label{directproduct-diagram}
\begin{itemize}
\item[(i)]
Let $M/K$ be a Galois sub-extension of $L/K$. Let $M_0=M\cap K^{nr}$.
Then, the following square
\begin{equation*}
\SelectTips{cm}{}\xymatrix{
{\text{Gal}(L/K)}\ar[d]_{res_M}\ar[r]^-{\sim} & 
{\text{Gal}(L_0/K)\times\text{Gal}(L/L_0)}\ar@{.>}[d]^{(res_{M_0},res_M)} \\
{\text{Gal}(M/K)}\ar[r]^-{\sim} & {\text{Gal}(M_0/K)\times\text{Gal}(M/M_0)}
}
\end{equation*}
is commutative. Now, for $\sigma\in\text{Gal}(L/K)$, there exists
$0\leq m,m'\in\mathbb Z$ such that $\sigma\mid_{L_0}=\varphi^m$ and
$(\sigma\mid_M)\mid_{M_0}=\varphi^{m'}$. Thus, $\varphi^m\mid_{M_0}=
\varphi^{m'}\mid_{M_0}$ and the identity
$(\varphi^{-m}\sigma)\mid_M=\varphi^{-m'}(\sigma\mid_M)$
is satisfied. 
\item[(ii)]
Let $F/K$ be a finite Galois sub-extension of $L/K$. Let 
$L_0^{(K)}=L\cap K^{nr}$ and $L_0^{(F)}=L\cap F^{nr}$. Then, the following
square
\begin{equation*}
\SelectTips{cm}{}\xymatrix{
{\text{Gal}(L/F)}\ar[d]_{\text{inc.}}\ar[r]^-{\sim} & 
{\text{Gal}(L_0^{(F)}/F)\times\text{Gal}(L/L_0^{(F)})}
\ar@{.>}[d]^{(res_{L_0^{(K)}},\text{inc.})} \\
{\text{Gal}(L/K)}\ar[r]^-{\sim} & {\text{Gal}(L_0^{(K)}/K)\times
\text{Gal}(L/L_0^{(K)})}
}
\end{equation*}
is commutative. Now, for any $\sigma\in\text{Gal}(L/F)$, there exists
$0\leq m,m'\in\mathbb Z$ such that
$\sigma\mid_{L_0^{(F)}}=\varphi_F^m$ and 
$\sigma\mid_{L_0^{(K)}}=\varphi_K^{m'}$. Thus, 
$\varphi_F^m\mid_{L_0^{(K)}}=\varphi_K^{m'}$ and the identity 
$\varphi_F^{-m}\sigma=\varphi_K^{-m'}\sigma$ is satisfied.
\end{itemize}
\end{remark}
Now, by Proposition 1.2.3 of 
\cite{wintenberger1983} or by Lemma 3.3 of \cite{ikeda-serbest}, $L/L_0$ 
is a totally-ramified $APF$-Galois extension with 
$L_0\subset L\subset (L_0)_{\varphi'}$, where $\varphi'=\varphi^d$ is a 
Lubin-Tate splitting over $L_0$ by Remark \ref{key-remark}. Thus, define 
the map
\begin{equation}
\label{generalized-arrow}
{\pmb\phi}_{L/K}^{(\varphi)} :
\text{Gal}(L/K)\rightarrow K^\times/N_{L_0/K}L_0^\times
\times U_{\widetilde{\mathbb X}(L/K)}^\diamond/U_{\mathbb X(L/K)}
\end{equation}
by
\begin{equation}
\label{generalized-arrow-definition}
{\pmb\phi}_{L/K}^{(\varphi)}(\sigma):=
\left(\pi_K^m . N_{L_0/K}L_0^\times ,
(u_{\widetilde{E}}).U_{\mathbb X(L/K)}\right),
\end{equation}
where $\sigma\in\text{Gal}(L/K)$ such that
$\sigma\mid_{L_0}=\varphi^m$, for some $0\leq m\in\mathbb Z$, and 
$U=(u_{\widetilde{E}})\in U_{\widetilde{\mathbb X}(L/L_0)}$ satisfies the
equality
\begin{equation}
\label{main-equation}
U^{1-\varphi^d}=\Pi_{\varphi';L/L_0}^{(\varphi^{-m}\sigma)-1},
\end{equation}
where $\Pi_{\varphi';L/L_0}$ is the canonical prime element
of the local field $\mathbb X(L/L_0)$, which is the canonical prime element 
$\Pi_{\varphi;T/K}$ of the local field $\mathbb X(T/K)$ by Lemma 
\ref{unramified-L/L_0-over-T/K} and by Lemma \ref{lemma-prime-equality}.
Thus, (\ref{main-equation}) can be reformulated by
\begin{equation}
\label{main-equation-alternative}
U^{1-\varphi^d}=\Pi_{\varphi';L/L_0}^{\sigma-1},
\end{equation}
as $\Pi_{\varphi;T/K}^\varphi=\Pi_{\varphi;T/K}$. Moreover, the solution
$U=\left(u_{\widetilde{E}}\right)\in U_{\widetilde{\mathbb X}(L/L_0)}$, which
is unique modulo $U_{\mathbb X(L/K)}$, satisfies $\text{Pr}_{L_0}(U)
=u_{\widetilde{L}_0}\in U_{L_0}$. In fact, by Lemma \ref{lemma-prime-equality},
$\text{Pr}_{L_0}(\Pi_{\varphi';L/L_0})=\pi_K$, and therefore
$\text{Pr}_{L_0}(\Pi_{\varphi';L/L_0}^{\sigma-1})=\pi_{K}^{\sigma-1}=1_K$.
Hence, $\text{Pr}_{L_0}(U^{1-\varphi^d})
=\text{Pr}_{L_0}(\Pi_{\varphi';L/L_0}^{\sigma-1})=1_K$ yields 
$u_{\widetilde{L}_0}^{1-\varphi^d}=1_K$, that is $u_{\widetilde{L}_0}\in
U_{L_0}$ as $\widetilde{L}_0\cap (L_0)_{\varphi'}=L_0$. Now, it follows that,
$\text{Pr}_{L_0}(U)=u_{\widetilde{L}_0}\in U_{L_0}$. Thus, 
$U=(u_{\widetilde{E}})$ belongs to $U_{\widetilde{\mathbb X}(L/K)}^\diamond$,
by Definition \ref{diamond-group-L/K}.
\begin{remark}
\label{generalized-arrow-definition-new}
We can reformulate the definition of the generalized arrow
\begin{equation*}
{\pmb\phi}_{L/K}^{(\varphi)}:\text{Gal}(L/K)
\rightarrow K^\times/N_{L_0/K}L_0^\times
\times U_{\widetilde{\mathbb X}(L/K)}^\diamond/U_{\mathbb X(L/K)}
\end{equation*}
for the extension $L/K$ as
\begin{equation*}
{\pmb\phi}_{L/K}^{(\varphi)}(\sigma)=\left(\pi_K^m.N_{L_0/K}L_0^\times,
\phi_{L/L_0}^{(\varphi')}(\varphi^{-m}\sigma)\right)
\end{equation*}
for every $\sigma\in\text{Gal}(L/K)$, where $\sigma\mid_{L_0}=\varphi^m$
for some $0\leq m\in\mathbb Z$.  
\end{remark}
There is a natural continuous action of $\text{Gal}(L/K)$ on the
topological group  
$K^\times/N_{L_0/K}L_0^\times\times U_{\widetilde{\mathbb X}(L/K)}^\diamond
/U_{\mathbb X(L/K)}$ 
defined by abelian local class field theory on
the first component and by eq.s (5.5) and (5.7) of \cite{ikeda-serbest}
on the second component as
\begin{equation}
\label{galoismodule1}
(\overline{a},\overline{U})^\sigma=
\left(\overline{a}^{\varphi^m},\overline{U}^{\varphi^{-m}\sigma}\right)=
\left(\overline{a},\overline{U}^{\varphi^{-m}\sigma}\right),
\end{equation}
for every $\sigma\in\text{Gal}(L/K)$, where $\sigma\mid_{L_0}=\varphi^m$
for some $0\leq m\in\mathbb Z$, and
for every $a\in K^\times$ with $\overline{a}=a.N_{L_0/K}L_0^\times$ 
and 
$U\in U_{\widetilde{\mathbb X}(L/K)}^\diamond$ with
$\overline{U}=U.U_{\mathbb X(L/K)}$.
\textit{We shall always view 
$K^\times/N_{L_0/K}L_0^\times
\times U_{\widetilde{\mathbb X}(L/K)}^\diamond/U_{\mathbb X(L/K)}$
as a topological $\text{Gal}(L/K)$-module in this text}.
\begin{theorem}
\label{cocycle1}
Let $L/K$ be any infinite $APF$-Galois sub-extension of $K_{\varphi^d}/K$
with residue-class degree $d$. Then the generalized arrow
\begin{equation*}
{\pmb\phi}_{L/K}^{(\varphi)}:\text{Gal}(L/K)\rightarrow K^\times/
N_{L_0/K}L_0^\times\times U_{\widetilde{\mathbb X}(L/K)}^\diamond/
U_{\mathbb X(L/K)} 
\end{equation*} 
defined for the extension $L/K$ is an injection, and for every
$\sigma,\tau\in\text{Gal}(L/K)$, 
\begin{equation}
{\pmb\phi}_{L/K}^{(\varphi)}(\sigma\tau)={\pmb\phi}_{L/K}^{(\varphi)}(\sigma)
{\pmb\phi}_{L/K}^{(\varphi)}(\tau)^\sigma
\end{equation} 
co-cycle condition is satisfied.
\end{theorem}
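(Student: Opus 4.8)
The plan is to handle the two coordinates of ${\pmb\phi}_{L/K}^{(\varphi)}$ separately, reducing the second coordinate to Fesenko's theory for the totally-ramified extension $L/L_0$ (legitimate by Remark~\ref{key-remark}) and the first coordinate to abelian local class field theory for the unramified extension $L_0/K$. Recall from Remark~\ref{generalized-arrow-definition-new} that, writing $\sigma\mid_{L_0}=\varphi^m$ with $0\le m\in\mathbb Z$, one has ${\pmb\phi}_{L/K}^{(\varphi)}(\sigma)=\bigl(\pi_K^m.N_{L_0/K}L_0^\times,\ \phi_{L/L_0}^{(\varphi')}(\varphi^{-m}\sigma)\bigr)$ with $\varphi'=\varphi^d$. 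First I would record that this is independent of the representative $m$: replacing $m$ by $m+d$ multiplies $\pi_K^m$ by $\pi_K^d=N_{L_0/K}(\pi_K)\in N_{L_0/K}L_0^\times$ and changes $\varphi^{-m}\sigma$ by $(\varphi')^{-1}$, which restricts to the identity on $L$ since $L\subseteq K_{\varphi^d}$, hence leaves $\phi_{L/L_0}^{(\varphi')}$ untouched; that ${\pmb\phi}_{L/K}^{(\varphi)}(\sigma)$ lies in the asserted target is already established in the discussion preceding the theorem.

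For injectivity, suppose ${\pmb\phi}_{L/K}^{(\varphi)}(\sigma)=(\overline 1,\overline 1)$. Triviality of the first coordinate means $\pi_K^m\in N_{L_0/K}L_0^\times$; since $L_0/K$ is unramified, the norm-residue isomorphism $K^\times/N_{L_0/K}L_0^\times\xrightarrow{\sim}\text{Gal}(L_0/K)$ carries $\pi_K$ to the Frobenius $\varphi\mid_{L_0}$, so $d\mid m$ and $\sigma\mid_{L_0}=\mathrm{id}$, i.e. $\sigma\in\text{Gal}(L/L_0)$. Taking $m=0$, the second coordinate reduces to $\phi_{L/L_0}^{(\varphi')}(\sigma)=\overline 1$, and the injectivity of Fesenko's arrow for the extension $L/L_0$ (proved in \cite{fesenko2000,fesenko2001,fesenko2005} and Section~5 of \cite{ikeda-serbest}) gives $\sigma=\mathrm{id}$.

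For the cocycle identity, fix $\sigma,\tau\in\text{Gal}(L/K)$ with $\sigma\mid_{L_0}=\varphi^m$ and $\tau\mid_{L_0}=\varphi^n$, so $(\sigma\tau)\mid_{L_0}=\varphi^{m+n}$. On the first coordinate the $\text{Gal}(L/K)$-action of (\ref{galoismodule1}) is trivial (it factors through the abelian $\text{Gal}(L_0/K)$) and $\pi_K^{m+n}\equiv\pi_K^m\pi_K^n$ modulo $N_{L_0/K}L_0^\times$, so the first coordinates of ${\pmb\phi}_{L/K}^{(\varphi)}(\sigma\tau)$ and of ${\pmb\phi}_{L/K}^{(\varphi)}(\sigma)\,{\pmb\phi}_{L/K}^{(\varphi)}(\tau)^\sigma$ agree. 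For the second coordinate, pick solutions $U_\sigma,U_\tau,U_{\sigma\tau}\in U_{\widetilde{\mathbb X}(L/L_0)}^\diamond$ of (\ref{main-equation-alternative}) attached to $\sigma$, $\tau$, $\sigma\tau$ (each unique modulo $U_{\mathbb X(L/K)}$ by the kernel computation for $1-\varphi^d$), and set $W:=U_\sigma\cdot U_\tau^{\sigma}$, where $(\,\cdot\,)^\sigma$ denotes the action on $\widetilde{\mathbb X}(L/L_0)=\widetilde{\mathbb X}(L/K)$ used in (\ref{galoismodule1}). Granting that this action is multiplicative in $\sigma$ and commutes with $\varphi^d$, and that $\Pi_{\varphi';L/L_0}=\Pi_{\varphi;T/K}$ is $\varphi$-fixed (Lemma~\ref{lemma-prime-equality} and the remarks following it), one computes
\begin{equation*}
W^{1-\varphi^d}=U_\sigma^{1-\varphi^d}\cdot\bigl(U_\tau^{1-\varphi^d}\bigr)^{\sigma}=\Pi_{\varphi';L/L_0}^{\sigma-1}\cdot\bigl(\Pi_{\varphi';L/L_0}^{\tau-1}\bigr)^{\sigma}=\Pi_{\varphi';L/L_0}^{\sigma\tau-1},
\end{equation*}
the last step being the elementary identity $(\Pi^{\sigma}\Pi^{-1})(\Pi^{\sigma\tau}(\Pi^{\sigma})^{-1})=\Pi^{\sigma\tau}\Pi^{-1}$. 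By uniqueness modulo $U_{\mathbb X(L/K)}$ this forces $\overline W=\overline{U_{\sigma\tau}}$, i.e. the second coordinate of ${\pmb\phi}_{L/K}^{(\varphi)}(\sigma\tau)$ equals $\phi_{L/L_0}^{(\varphi')}(\varphi^{-m}\sigma)\cdot\phi_{L/L_0}^{(\varphi')}(\varphi^{-n}\tau)^{\varphi^{-m}\sigma}$, which is exactly the second coordinate of ${\pmb\phi}_{L/K}^{(\varphi)}(\sigma)\,{\pmb\phi}_{L/K}^{(\varphi)}(\tau)^\sigma$. This proves the cocycle relation.

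The step I expect to be the genuine obstacle is the one invoked parenthetically above: that (\ref{galoismodule1}) really defines a $\text{Gal}(L/K)$-module structure, and in particular that the induced action on $\widetilde{\mathbb X}(L/L_0)$ is multiplicative and commutes with $\varphi^d$. The delicate point is that, because $T=L\cap K_\varphi$ need not be normal over $K$, the isomorphism (\ref{directproduct}) is only a bijection and $\text{Gal}(L/K)$ is merely the semidirect product $\text{Gal}(L/L_0)\rtimes\text{Gal}(L_0/K)$; one must therefore act by the canonical lift of $\sigma\in\text{Gal}(L/K)$ to an automorphism of $\widetilde{\mathbb X}(L/L_0)=\mathbb X(\widetilde L/\widetilde{L_0})$ whose $\widetilde{L_0}$-coordinate is $\varphi^{m(\sigma)}$, and check — using $L\cap K^{nr}=L_0$ and, crucially, that $\varphi^d$ acts trivially on $L$ (as $L\subseteq K_{\varphi^d}$) — that $\sigma\mapsto$(this lift) is a homomorphism commuting with $\varphi^d$. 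These verifications, together with the restriction/inflation compatibilities of Remark~\ref{directproduct-diagram}, constitute the technical core; they run parallel to the corresponding facts for totally-ramified extensions established in \cite{ikeda-serbest}, the only new ingredient being the bookkeeping of the unramified coordinate $\text{Gal}(L_0/K)$.
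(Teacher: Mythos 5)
Your overall reduction coincides with the paper's: the first coordinate is handled by abelian class field theory for the unramified extension $L_0/K$, the second by Fesenko's theory for $L/L_0$ (Remark \ref{generalized-arrow-definition-new}), and your injectivity argument is the paper's argument. The genuine problem is the step you explicitly defer. The paper's proof of the cocycle identity rests on reading (\ref{directproduct})--(\ref{directproduct-definition}) as a group isomorphism, so that $\varphi^{-(m+n)}\sigma\tau=(\varphi^{-m}\sigma)(\varphi^{-n}\tau)$ in $\mathrm{Gal}(L/L_0)$, and then quoting the $1$-cocycle property of $\phi_{L/L_0}^{(\varphi^d)}$ (Theorem 5.6 of \cite{ikeda-serbest}); that multiplicativity is also exactly what makes (\ref{galoismodule1}) a $\mathrm{Gal}(L/K)$-action. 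You instead insist that (\ref{directproduct}) is a priori only a bijection and $\mathrm{Gal}(L/K)$ only a semidirect product, and then ``grant'' the multiplicativity of the action. But what you grant is not a routine verification parallel to the totally ramified case: writing $\sigma_0=\varphi^{-m}\sigma$, $\tau_0=\varphi^{-n}\tau$, $(\sigma\tau)_0=\varphi^{-(m+n)}\sigma\tau$, Fesenko's cocycle property gives that the second coordinate of ${\pmb\phi}_{L/K}^{(\varphi)}(\sigma){\pmb\phi}_{L/K}^{(\varphi)}(\tau)^\sigma$ is $\phi_{L/L_0}^{(\varphi^d)}(\sigma_0\tau_0)$, while that of ${\pmb\phi}_{L/K}^{(\varphi)}(\sigma\tau)$ is $\phi_{L/L_0}^{(\varphi^d)}((\sigma\tau)_0)$; by the injectivity of $\phi_{L/L_0}^{(\varphi^d)}$ the desired equality is therefore \emph{equivalent} to $(\sigma\tau)_0=\sigma_0\tau_0$, i.e.\ to $\varphi\mid_L$ commuting with $\mathrm{Gal}(L/L_0)$, i.e.\ (since $L=TL_0$ and $\varphi$ fixes $T$ pointwise) to $\sigma_0(T)=T$ for every $\sigma_0\in\mathrm{Gal}(L/L_0)$, i.e.\ to $T/K$ being normal --- exactly the property you deny in general. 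So the deferred ``technical core'' is not bookkeeping of the unramified coordinate; it is equivalent to the cocycle identity itself, and it is precisely what the paper's appeal to the isomorphism (\ref{directproduct}) supplies.

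Moreover, the fix you sketch does not produce it. The lift of $\sigma$ to $\widetilde L$ whose $\widetilde{L}_0$-coordinate is $\varphi^{m(\sigma)}$ depends on a choice of $m(\sigma)$ well defined only modulo $d$, and $\sigma\mapsto(\text{this lift})$ is multiplicative only up to integral powers of $\varphi^{d}$; now $\varphi^{d}$ is trivial on $L$ but not on $\widetilde{L}_0$, and it does not act trivially on $U_{\widetilde{\mathbb X}(L/K)}^\diamond/U_{\mathbb X(L/K)}$, so the observation ``$\varphi^d$ acts trivially on $L$'' cannot repair the failure of $m$ to be additive. The same difficulty is hidden in your ``elementary identity'' step: with the action of $\sigma$ taken as in (\ref{galoismodule1}), i.e.\ via $\sigma_0$, the quantity $(\Pi^{\tau-1})^{\sigma}$ equals $\Pi^{\sigma\tau}(\Pi^{\sigma})^{-1}$ only if the extra factor $\varphi^{-m}$ fixes $\sigma_0\tau_0(\Pi)\,\sigma_0(\Pi)^{-1}$, whose components lie in $L$ but not, in general, in the $\varphi$-fixed field $T$. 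In short: either you accept, as the paper does, that (\ref{directproduct})--(\ref{directproduct-definition}) is a homomorphism of groups (equivalently that $T$ is stable under $\mathrm{Gal}(L/K)$), in which case your computation collapses to the paper's proof and the ``obstacle'' disappears; or you maintain that it is only a bijection, and then your proposal is missing the essential argument and, as sketched, cannot be completed.
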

\begin{proof}
The injectivity of the arrow given by eq. (\ref{generalized-arrow}) 
and defined by eq. (\ref{generalized-arrow-definition}) is clear from the 
canonical topological isomorphism defined by
(\ref{directproduct}) and (\ref{directproduct-definition})
combined with abelian local class field theory and Theorem
5.6 of Fesenko in \cite{ikeda-serbest}. 
To be precise, let ${\pmb\phi}_{L/K}^{(\varphi)}(\sigma)
=(\pi_K^m,(u_{\widetilde{E}}))$ with $d\mid m$ and $(u_{\widetilde{E}})
\in U_{\mathbb X(L/L_0)}=U_{\mathbb X(L/K)}$. As $d\mid m$, $\sigma$ acts
trivially on $L_0$. Since 
$(u_{\widetilde{E}})^{\varphi^d-1}=(1_{\widetilde{E}})=
\Pi_{\varphi';L/L_0}^{\sigma-1}$, $\sigma$ acts trivially on the prime 
elements of finite sub-extensions between $L_0$ and $L$. Thus, $\sigma$
is the identity element of $\text{Gal}(L/L_0)$. 
Now, for
$\sigma,\tau\in\text{Gal}(L/K)$, with $\sigma\mid_{L_0}=\varphi^m$ and
$\tau\mid_{L_0}=\varphi^n$ for some $0\leq m,n\in\mathbb Z$, following the
alternative definition of the generalized arrow ${\pmb\phi}_{L/K}^{(\varphi)}$
introduced in Remark \ref{generalized-arrow-definition-new},
\begin{equation*}
\begin{aligned}
{\pmb\phi}_{L/K}^{(\varphi)}(\sigma\tau) & =
(\pi_K^{m+n}.N_{L_0/K}L_0^\times,\phi_{L/L_0}^{(\varphi')}(\varphi^{-(m+n)}
\sigma\tau))\\
& =\left((\pi_K^m.N_{L_0/K}L_0^\times)(\pi_K^n.N_{L_0/K}L_0^\times),
\phi_{L/L_0}^{(\varphi')}(\varphi^{-m}\sigma)
\phi_{L/L_0}^{(\varphi')}(\varphi^{-n}\tau)^{\varphi^{-m}\sigma}\right)\\
& =\left(\pi_K^m.N_{L_0/K}L_0^\times ,\phi_{L/L_0}^{(\varphi')}
(\varphi^{-m}\sigma)\right)
\left(\pi_K^n.N_{L_0/K}L_0^\times,\phi_{L/L_0}^{(\varphi')}(\varphi^{-n}\tau)
^{\varphi^{-m}\sigma}\right)\\
&={\pmb\phi}_{L/K}^{(\varphi)}(\sigma){\pmb\phi}_{L/K}^{(\varphi)}(\tau)^\sigma
\end{aligned}
\end{equation*} 
by Theorem 5.6 of Fesenko in \cite{ikeda-serbest} and by the definition of the
action of $\sigma\in\text{Gal}(L/K)$ on 
${\pmb\phi}_{L/K}^{(\varphi)}(\tau)\in K^\times/
N_{L_0/K}L_0^\times
\times U_{\widetilde{\mathbb X}(L/K)}^\diamond/
U_{\mathbb X(L/K)}$
defined by eq. (\ref{galoismodule1}).
\end{proof}
Now, we immediately have the following result.
\begin{corollary}
\label{grouplaw1}
Define a law of composition $\ast$ on 
$\text{im}({\pmb\phi}_{L/K}^{(\varphi)})$ by
\begin{equation}
\label{star1}
(\overline{a},\overline{U})\ast (\overline{b},\overline{V})
=(\overline{a}.\overline{b},\overline{U}.
\overline{V}^{(\phi_{L/L_0}^{(\varphi')})^{-1}(\overline{U})}),
\end{equation}
for $(\overline{a},\overline{U})$, $(\overline{b},\overline{V})\in
\text{im}({\pmb\phi}_{L/K}^{(\varphi)})$, where
$\overline{a}=a.N_{L_0/K}L_0^\times, \overline{b}=b.N_{L_0/K}L_0^\times
\in K^\times/N_{L_0/K}L_0^\times$ with $a,b\in K^\times$ and
for $\overline{U}=U.U_{\mathbb X(L/K)}, \overline{V}=V.U_{\mathbb X(L/K)}\in 
U_{\widetilde{\mathbb X}(L/K)}^\diamond/U_{\mathbb X(L/K)}$ with $U,V\in
U_{\widetilde{\mathbb X}(L/K)}^\diamond$. Then
$\text{im}({\pmb\phi}_{L/K}^{(\varphi)})$ is a topological 
group under $\ast$, and the map ${\pmb\phi}_{L/K}^{(\varphi)}$ induces 
an isomorphism of topological groups
\begin{equation}
\label{isomorphism1}
{\pmb\phi}_{L/K}^{(\varphi)}:\text{Gal}(L/K)\xrightarrow{\sim}
\text{im}({\pmb\phi}_{L/K}^{(\varphi)}), 
\end{equation}
where the topological group structure on 
$\text{im}({\pmb\phi}_{L/K}^{(\varphi)})$
is defined with respect to the binary operation $\ast$ defined by eq. 
(\ref{star1}).
\end{corollary}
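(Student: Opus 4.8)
The plan is to derive everything from Theorem~\ref{cocycle1} together with the cocycle identity, treating the claim as a formal consequence of the fact that a $1$-cocycle with values in a $\text{Gal}(L/K)$-module becomes a homomorphism once the target is equipped with the appropriate ``twisted'' product. First I would observe that the binary operation $\ast$ defined by eq.~(\ref{star1}) is exactly the operation one obtains by transporting the group structure of $\text{Gal}(L/K)$ through the bijection $\pmb{\phi}_{L/K}^{(\varphi)}$; concretely, for $\sigma,\tau\in\text{Gal}(L/K)$ with $\sigma\mid_{L_0}=\varphi^m$, writing $\pmb{\phi}_{L/K}^{(\varphi)}(\sigma)=(\overline{a},\overline{U})$ and $\pmb{\phi}_{L/K}^{(\varphi)}(\tau)=(\overline{b},\overline{V})$, the cocycle relation of Theorem~\ref{cocycle1} gives
\begin{equation*}
\pmb{\phi}_{L/K}^{(\varphi)}(\sigma\tau)=\pmb{\phi}_{L/K}^{(\varphi)}(\sigma)\,\pmb{\phi}_{L/K}^{(\varphi)}(\tau)^{\sigma}=\left(\overline{a}\,\overline{b},\ \overline{U}\,\overline{V}^{\,\varphi^{-m}\sigma}\right),
\end{equation*}
using eq.~(\ref{galoismodule1}) for the Galois action on the second component. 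The point is then that on the image, $\varphi^{-m}\sigma$ is precisely the element of $\text{Gal}(L/L_0)$ corresponding to $\overline{U}$ under Fesenko's reciprocity isomorphism, i.e. $\varphi^{-m}\sigma=(\phi_{L/L_0}^{(\varphi')})^{-1}(\overline{U})$ by Theorem~5.6 of \cite{ikeda-serbest} applied via Remark~\ref{generalized-arrow-definition-new}. Hence the exponent $\varphi^{-m}\sigma$ appearing above is intrinsically recovered from $\overline{U}$, and the displayed formula for $\pmb{\phi}_{L/K}^{(\varphi)}(\sigma\tau)$ becomes exactly $(\overline{a},\overline{U})\ast(\overline{b},\overline{V})$.

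With this identification in hand the proof is essentially bookkeeping. I would argue: (1) Closure and the group axioms for $\ast$ on $\text{im}(\pmb{\phi}_{L/K}^{(\varphi)})$ follow because, by the computation above, $\ast$ is the push-forward under the bijection $\pmb{\phi}_{L/K}^{(\varphi)}$ of the group law on $\text{Gal}(L/K)$; a push-forward of a group law along a bijection is automatically a group law, with identity $\pmb{\phi}_{L/K}^{(\varphi)}(1)=(\overline 1,\overline 1)$ and inverse of $(\overline a,\overline U)$ given by $\pmb{\phi}_{L/K}^{(\varphi)}(\sigma^{-1})$. (2) By construction $\pmb{\phi}_{L/K}^{(\varphi)}$ intertwines the group laws, so it is a group isomorphism; it is injective by Theorem~\ref{cocycle1} and surjective onto its image by definition. (3) For the topological statement, I would equip $\text{im}(\pmb{\phi}_{L/K}^{(\varphi)})$ with the subspace topology from $K^\times/N_{L_0/K}L_0^\times\times U_{\widetilde{\mathbb X}(L/K)}^\diamond/U_{\mathbb X(L/K)}$ and note that $\pmb{\phi}_{L/K}^{(\varphi)}$ is a homeomorphism onto its image: continuity and openness reduce, via the decomposition (\ref{directproduct})--(\ref{directproduct-definition}) and Remark~\ref{generalized-arrow-definition-new}, to the corresponding topological assertions for abelian local class field theory on the $\text{Gal}(L_0/K)$-component and for Fesenko's map $\phi_{L/L_0}^{(\varphi')}$ on the $\text{Gal}(L/L_0)$-component, both of which are known (Theorem~5.6 of \cite{ikeda-serbest}). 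Then continuity of $\ast$ and of inversion on $\text{im}(\pmb{\phi}_{L/K}^{(\varphi)})$ is inherited from continuity of multiplication and inversion on $\text{Gal}(L/K)$.

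I expect the only genuinely delicate point to be the continuity of the twisted operation $\ast$ in the formula (\ref{star1}) as written \emph{intrinsically} on the image — that is, verifying that the assignment $\overline U\mapsto (\phi_{L/L_0}^{(\varphi')})^{-1}(\overline U)$ and the ensuing action $\overline V\mapsto \overline V^{(\phi_{L/L_0}^{(\varphi')})^{-1}(\overline U)}$ vary continuously, since this uses that $(\phi_{L/L_0}^{(\varphi')})^{-1}$ is continuous on $\text{im}(\phi_{L/L_0}^{(\varphi')})$ and that the Galois action map $\text{Gal}(L/L_0)\times U_{\widetilde{\mathbb X}(L/K)}^\diamond/U_{\mathbb X(L/K)}\to U_{\widetilde{\mathbb X}(L/K)}^\diamond/U_{\mathbb X(L/K)}$ is continuous. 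Both facts are part of, or immediate from, the structure theory recalled from \cite{ikeda-serbest}, so in the write-up I would simply cite them; everything else is a formal transport-of-structure argument, and I would organize the exposition so that it is visibly so, leaving the routine verifications of the individual group axioms to the reader.
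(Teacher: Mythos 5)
Your argument is correct and is essentially the paper's: the paper offers no separate proof, stating only that the corollary follows immediately from Theorem \ref{cocycle1}, and your transport-of-structure reading — using the cocycle identity, the identification $\varphi^{-m}\sigma=(\phi_{L/L_0}^{(\varphi')})^{-1}(\overline{U})$ via Remark \ref{generalized-arrow-definition-new} and Theorem 5.6 of \cite{ikeda-serbest}, and the continuity facts cited there — is exactly the intended justification.
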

Recall that, for an infinite $APF$-Galois extension $L/K$ and for
every $-1\leq u\in\mathbb R$, the $u^{th}$ higher ramification 
subgroup $\text{Gal}(L/K)_{u}$ of $\text{Gal}(L/K)$ in lower numbering
is defined by
\begin{equation*}
\text{Gal}(L/K)_u=\text{Gal}(L/K)^{\varphi_{L/K}(u)},
\end{equation*}
where the number $-1\leq\varphi_{L/K}(u)\in\mathbb R$ is defined by
eq. (3.1) in \cite{ikeda-serbest} and as usual the $\varphi_{L/K}(u)^{th}$ 
higher ramification subgroup $\text{Gal}(L/K)^{\varphi_{L/K}(u)}$ 
of $\text{Gal}(L/K)$ in upper numbering by the projective limit
$\text{Gal}(L/K)^{\varphi_{L/K}(u)}=\varprojlim\limits_{K\subseteq
  F\subset L}\text{Gal}(F/K)^{\varphi_{L/K}(u)}$
defined by eq.s (2.1) and (2.2) in \cite{ikeda-serbest}. 
Now, let $E/K$ be a Galois sub-extension of $L/K$. Then, for any chain
of field extensions 
$\overbrace{\underbrace{K\subseteq F}_{\text{finite Gal.}}\subseteq F'}
^{\text{finite Gal.}}\subset L$, the square
\begin{equation}
\label{t-square}
\SelectTips{cm}{}\xymatrix{
{\text{Gal}(F'/K)^{\varphi_{L/K}(u)}}\ar[rr]^-{t_{F'\cap E}^{F'}
(\varphi_{L/K}(u))}
\ar[d]_{t_F^{F'}(\varphi_{L/K}(u))} & & {\text{Gal}(F'\cap
  E/K)^{\varphi_{L/K}(u)}}\ar[d]^{t_{F\cap E}^{F'\cap E}
(\varphi_{L/K}(u))} \\
{\text{Gal}(F/K)^{\varphi_{L/K}(u)}}\ar[rr]^-{t_{F\cap E}^F
(\varphi_{L/K}(u))} & &
{\text{Gal}(F\cap E/K)^{\varphi_{L/K}(u)}}
}
\end{equation}
is commutative. Thus, passing to the projective limits, there exists 
a continuous group homomorphism
\begin{equation}
\label{t-morphism}
t_E^L(\varphi_{L/K}(u))=
\varprojlim\limits_{K\subseteq F\subset L}
t_{F\cap E}^{F}(\varphi_{L/K}(u)) :
\text{Gal}(L/K)^{\varphi_{L/K}(u)}\rightarrow
\text{Gal}(E/K)^{\varphi_{L/K}(u)},
\end{equation}
which is essentially the restriction morphism from $L$ to $E$.
This morphism is a surjection, as the objects in the 
respective projective systems are compact and Hausdorff. Furthermore,
the following square
\begin{equation}
\label{r-t-square}
\SelectTips{cm}{}\xymatrix{
{\text{Gal}(L/K)}\ar[rr]^-{r_E^L} & & {\text{Gal}(E/K)} \\
{\text{Gal}(L/K)^{\varphi_{L/K}(u)}}\ar[rr]^-{t_E^L(\varphi_{L/K}(u))}
\ar[u]^{\text{inc.}} & &
{\text{Gal}(E/K)^{\varphi_{L/K}(u)}}\ar[u]_{\text{inc.}} \\
{\text{Gal}(L/K)^{\varphi_{L/K}(u')}}\ar[rr]^-{t_E^L(\varphi_{L/K}(u'))}
\ar[u]^{\text{inc.}} & &
{\text{Gal}(E/K)^{\varphi_{L/K}(u')}}\ar[u]_{\text{inc.}}
}
\end{equation}
is commutative for every pair $u,u'\in\mathbb R_{\geq -1}$ satisfying
$u\leq u'$. Here, the arrow $r_E^L : \text{Gal}(L/K)\rightarrow
\text{Gal}(E/K)$ denotes the restriction map.
Therefore, immediately, we have the following result.
\begin{lemma}
\label{directproduct-lemma}
For $0\leq u\in\mathbb R$,
the topological isomorphism defined by eq.s (\ref{directproduct}) and
(\ref{directproduct-definition}) induces a topological isomorphism
\begin{equation}
\label{directproduct2}
\text{Gal}(L/K)_u\simeq\underbrace{\text{Gal}(L_0/K)^{\varphi_{L/K}(u)}}
_{\left<\text{id}_{L_0}\right>}\times\text{Gal}(L/L_0)^{\varphi_{L/K}(u)}
\end{equation}
defined by
\begin{equation}
\label{directproduct-definition2}
\sigma\mapsto \left(t_{L_0}^L(\varphi_{L/K}(u))(\sigma),\varphi^{-m}\sigma
\right)=(\text{id}_{L_0},\sigma),
\end{equation}
for every $\sigma\in\text{Gal}(L/K)_u$ with $\sigma\mid_{L_0}=
t_{L_0}^L(\varphi_{L/K}(u))(\sigma)=\varphi^m\mid_{L_0}$ for some 
$0\leq m\in\mathbb Z$ satisfying $d\mid m$. 
\end{lemma}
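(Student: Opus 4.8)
The plan is to deduce this lemma from the isomorphism in equations~(\ref{directproduct}) and (\ref{directproduct-definition}) by restricting it to the higher ramification subgroup $\text{Gal}(L/K)_u$ and checking that the two factors of the target decomposition are exactly as claimed. First I would recall that $\text{Gal}(L/K)_u = \text{Gal}(L/K)^{\varphi_{L/K}(u)}$ by the very definition of lower numbering recalled just before the statement, and likewise $\text{Gal}(L_0/K)_u$ and $\text{Gal}(L/L_0)_u$ should be related to the upper-numbering groups via the appropriate Herbrand functions; the key point is that, since $L_0/K$ is \emph{unramified}, for $u \geq 0$ one has $\text{Gal}(L_0/K)^{\varphi_{L/K}(u)} = \langle \text{id}_{L_0}\rangle$, which explains the underbrace in (\ref{directproduct2}). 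This is because the only ramification break of an unramified extension is at $-1$ (in the classical normalization), so any positive-index ramification subgroup of $\text{Gal}(L_0/K)$ is trivial; I would invoke this together with the surjectivity of $t_{L_0}^L(\varphi_{L/K}(u))$ from (\ref{t-morphism}) to see that $\sigma\mid_{L_0} = t_{L_0}^L(\varphi_{L/K}(u))(\sigma) = \text{id}_{L_0}$ for every $\sigma \in \text{Gal}(L/K)_u$, hence $m$ can be taken with $d \mid m$ (indeed $m \equiv 0$ on $L_0$).

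Next I would verify that the isomorphism (\ref{directproduct}) carries $\text{Gal}(L/K)_u$ \emph{into} $\langle\text{id}_{L_0}\rangle \times \text{Gal}(L/L_0)^{\varphi_{L/K}(u)}$ and that its restriction is surjective onto this subgroup. The inclusion into the first factor is the content of the previous paragraph. For the second factor, since $\sigma\mid_{L_0} = \text{id}_{L_0}$ we have $\varphi^{-m}\sigma = \sigma \in \text{Gal}(L/L_0)$ after identifying (this is the equality $(\text{id}_{L_0},\sigma)$ in (\ref{directproduct-definition2})); I then need that $\sigma$ lies in $\text{Gal}(L/L_0)^{\varphi_{L/K}(u)}$, which follows from the compatibility square (\ref{r-t-square}) applied with $E = L$ viewed over the base $L_0$ — more precisely, from the transitivity of Herbrand functions $\varphi_{L/K} = \varphi_{L_0/K}\circ\varphi_{L/L_0}$ and the fact that on the unramified part $\varphi_{L_0/K}$ is the identity for nonnegative arguments, so that the upper-numbering filtration of $\text{Gal}(L/K)$ restricted to the normal subgroup $\text{Gal}(L/L_0)$ agrees with the upper-numbering filtration of $\text{Gal}(L/L_0)$ (Herbrand's theorem in the $APF$ setting, as in Wintenberger and as recalled via (\ref{t-square})). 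Conversely, given $\tau \in \text{Gal}(L/L_0)^{\varphi_{L/K}(u)}$, the same compatibility shows its image in $\text{Gal}(L/K)$ under the inverse of (\ref{directproduct}) — namely $\tau$ itself, since it already fixes $L_0$ — lies in $\text{Gal}(L/K)^{\varphi_{L/K}(u)} = \text{Gal}(L/K)_u$; this gives surjectivity.

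Having established the set-theoretic bijection, I would note that it is a restriction of the topological isomorphism (\ref{directproduct}), hence automatically a homeomorphism onto its image, and a group isomorphism since (\ref{directproduct}) is; the formula (\ref{directproduct-definition2}) is just (\ref{directproduct-definition}) with $m$ chosen divisible by $d$, simplified using $\sigma\mid_{L_0} = \text{id}_{L_0}$. I expect the main obstacle to be the bookkeeping around the Herbrand transitivity formula and the precise normalization of ramification filtrations in the $APF$ (infinite, non-classical) setting: one must be careful that ``$\varphi_{L/K}(u)$'' computed for the big extension restricts correctly to both quotients, and that the unramified sub-extension $L_0/K$ really contributes nothing for $u \geq 0$ under the chosen conventions of \cite{ikeda-serbest}. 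Once that compatibility is cited correctly — essentially it is already packaged in the commutative squares (\ref{t-square}) and (\ref{r-t-square}) together with the surjectivity of the $t$-maps — the rest is formal, so I would keep the written proof short, pointing to those squares and to the standard behaviour of ramification groups under unramified base change.
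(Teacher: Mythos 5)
Your proposal is correct and follows essentially the same route as the paper: restrict the isomorphism of eq.s (\ref{directproduct}) and (\ref{directproduct-definition}) to $\text{Gal}(L/K)_u$, use the triviality of $\text{Gal}(L_0/K)^{\varphi_{L/K}(u)}$ (since $L_0/K$ is finite unramified) together with the square (\ref{r-t-square}) to get $\sigma\mid_{L_0}=\text{id}_{L_0}$ and $\varphi^{-m}\sigma=\sigma$, and obtain surjectivity from the equality $\text{Gal}(L/K)_u=\text{Gal}(L/L_0)^{\varphi_{L/K}(u)}$. The only difference is that the paper simply cites this last equality, whereas you sketch its justification via Herbrand transitivity and the vanishing contribution of the unramified part, which is a harmless (and correct) elaboration.
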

\begin{proof}
Note that, $\text{Gal}(L_0/K)^{\varphi_{L/K}(u)}$, for $0\leq u\in\mathbb R$,
is the trivial group $<\text{id}_{L_0}>$, as $L_0/K$ is a finite unramified 
extension. Thus, for $\sigma\in\text{Gal}(L/K)_u$, by the commutativity
of the diagram (\ref{r-t-square}),  
\begin{equation*}
t_{L_0}^L(\varphi_{L/K}(u))(\sigma)=\sigma\mid_{L_0}=\text{id}_{L_0},
\end{equation*}
and in return
\begin{equation*}
\sigma\mapsto\left(\text{id}_{L_0},\varphi^{-m}\sigma\right)
=(\text{id}_{L_0},\sigma),
\end{equation*}
where $\sigma\mid_{L_0}=\text{id}_{L_0}=\varphi^m\mid_{L_0}$ for some
$0\leq m\in\mathbb Z$ satisfying $d\mid m$. As $L$ is fixed by $\varphi^d$,
$\varphi^{-m}\sigma=\sigma$, thus 
$(\text{id}_{L_0},\varphi^{-m}\sigma)=(\text{id}_{L_0},\sigma)$. Now,
the injectivity of the morphism given by eq. (\ref{directproduct2})
and defined by eq. (\ref{directproduct-definition2}) is clear from 
the commutative square eq. (\ref{r-t-square}) and by the injectivity
of the arrow given by eq. (\ref{directproduct}) and defined by 
eq. (\ref{directproduct-definition}). Thus
it suffices to prove that this morphism is a surjection, which follows
from the triviality of $\text{Gal}(L_0/K)^{\varphi_{L/K}(u)}$
for $0\leq u\in\mathbb R$, and from the equality
$\text{Gal}(L/K)_{u}=\text{Gal}(L/L_0)^{\varphi_{L/K}(u)}$. 
\end{proof}
Now, $L/L_0$ is an $APF$-Galois sub-extension of $L/K$ by part (i) of 
Lemma 3.3 in \cite{ikeda-serbest}. Let 
$\varphi_{L/L_0}:\mathbb R_{\geq-1}\rightarrow\mathbb R_{\geq-1}$ 
be the Hasse-Herbrand function corresponding to
the $APF$-extension $L/L_0$ defined by eq. (3.1) in \cite{ikeda-serbest}, 
which is piecewise-linear and continuous.
So there exists a unique number $w=w(u,L/K)\in\mathbb R_{\geq -1}$, depending
on $u$, satisfying
$\varphi_{L/K}(u)=\varphi_{L/L_0}(w)$ and
\begin{equation*}
\text{Gal}(L/L_0)^{\varphi_{L/K}(u)}=
\text{Gal}(L/L_0)^{\varphi_{L/L_0}(w)}=
\text{Gal}(L/L_0)_w .
\end{equation*}
Thus, Lemma \ref{directproduct-lemma} can be reformulated as follows. The
topological isomorphism defined by eq.s (\ref{directproduct}) and 
(\ref{directproduct-definition}) induces a topological isomorphism
\begin{equation*}
\text{Gal}(L/K)_u\simeq \left<\text{id}_{L_0}\right>\times
\text{Gal}(L/L_0)_{w(u,L/K)},
\end{equation*}
for every $0\leq u\in\mathbb R$.

For each $0\leq i\in\mathbb R$, consider the $i^{th}$ higher unit group
$U_{\widetilde{\mathbb X}(L/K)}^i$ of the field $\widetilde{\mathbb X}(L/K)$,
and define the group
\begin{equation}
\label{diamond-group-i} 
\left(U_{\widetilde{\mathbb X}{(L/K)}}^\diamond\right)^i=
U_{\widetilde{\mathbb X}{(L/K)}}^\diamond\cap U_{\widetilde{\mathbb X}(L/K)}^i.
\end{equation}
Now, Fesenko ramification theorem, stated as Theorem 5.8 in 
\cite{ikeda-serbest}, has the following generalization for the 
generalized arrow $\pmb{\phi}_{L/K}^{(\varphi)}$ corresponding to the 
extension $L/K$, which is an infinite $APF$-Galois sub-extension of 
$K_{\varphi^d}/K$ with residue-class degree $[\kappa_L:\kappa_K]=d$.
\begin{theorem}[Ramification theorem]
\label{ramification1} 
For $0\leq u\in\mathbb R$,
let $\text{Gal}(L/K)_u$ denote the $u^{th}$ higher ramification 
subgroup in the lower numbering of the Galois group $\text{Gal}(L/K)$ 
corresponding to the infinite $APF$-Galois sub-extension $L/K$ of 
$K_{\varphi^d}/K$ with residue-class degree $[\kappa_L:\kappa_K]=d$. 
Then, for $0\leq n\in\mathbb Z$, there exists the inclusion
\begin{multline*}
\pmb{\phi}_{L/K}^{(\varphi)}
\left(\text{Gal}(L/K)_{\psi_{L/K}\circ\varphi_{L/L_0}(n)}
-\text{Gal}(L/K)_{\psi_{L/K}\circ\varphi_{L/L_0}(n+1)}\right)
\subseteq \\
\left<1_{K^\times/N_{L_0/K}L_0^\times}\right>\times
\left(
\left(U_{\widetilde{\mathbb X}{(L/K)}}^\diamond\right)^{n} 
U_{\mathbb X(L/K)}/U_{\mathbb X(L/K)}-
\left(U_{\widetilde{\mathbb X}{(L/K)}}^\diamond\right)^{n+1} 
U_{\mathbb X(L/K)}/U_{\mathbb X(L/K)}
\right).
\end{multline*}
\end{theorem}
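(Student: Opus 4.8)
The plan is to reduce the assertion to Fesenko's ramification theorem (Theorem~5.8 of \cite{ikeda-serbest}) applied to the infinite totally-ramified $APF$-Galois extension $L/L_0$, using the product decomposition of Lemma~\ref{directproduct-lemma} to dispose of the $\text{Gal}(L_0/K)$-coordinate, and the identifications $\widetilde{\mathbb X}(L/K)=\widetilde{\mathbb X}(L/L_0)$ and $\mathbb X(L/K)\simeq\mathbb X(L/L_0)$ of Lemma~\ref{unramified-L/L_0-over-T/K}, together with $U_{\widetilde{\mathbb X}(L/K)}^\diamond=U_{\widetilde{\mathbb X}(L/L_0)}^\diamond$ of Definition~\ref{diamond-group-L/K}, to transport the target filtration.

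First I would fix $n\in\mathbb Z_{\geq 0}$ and set $u_n=\psi_{L/K}\circ\varphi_{L/L_0}(n)$ and $u_{n+1}=\psi_{L/K}\circ\varphi_{L/L_0}(n+1)$. Since $\varphi_{L/L_0}$ and $\psi_{L/K}$ are increasing and vanish at $0$, one has $0\leq u_n\leq u_{n+1}$, so Lemma~\ref{directproduct-lemma} applies to both exponents. From $\varphi_{L/K}(u_n)=\varphi_{L/L_0}(n)$ and the defining relation $\varphi_{L/K}(u_n)=\varphi_{L/L_0}(w(u_n,L/K))$, the strict monotonicity of $\varphi_{L/L_0}$ forces $w(u_n,L/K)=n$, and likewise $w(u_{n+1},L/K)=n+1$. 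Hence the reformulation of Lemma~\ref{directproduct-lemma} recorded immediately before the theorem gives topological isomorphisms
\[
\text{Gal}(L/K)_{u_n}\simeq\langle\text{id}_{L_0}\rangle\times\text{Gal}(L/L_0)_n,
\qquad
\text{Gal}(L/K)_{u_{n+1}}\simeq\langle\text{id}_{L_0}\rangle\times\text{Gal}(L/L_0)_{n+1},
\]
induced by (\ref{directproduct}), the subscripts on the right being in the lower numbering of the $APF$-extension $L/L_0$. Since both decompositions lie over the constant first factor $\langle\text{id}_{L_0}\rangle$, passing to set differences identifies $\text{Gal}(L/K)_{u_n}-\text{Gal}(L/K)_{u_{n+1}}$ with $\text{Gal}(L/L_0)_n-\text{Gal}(L/L_0)_{n+1}$; in particular every $\sigma$ in the former set satisfies $\sigma\mid_{L_0}=\text{id}_{L_0}$.

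For such a $\sigma$, Remark~\ref{generalized-arrow-definition-new} with the choice $m=0$ yields
\[
\pmb{\phi}_{L/K}^{(\varphi)}(\sigma)=\big(1_{K^\times/N_{L_0/K}L_0^\times},\ \phi_{L/L_0}^{(\varphi')}(\sigma)\big),
\]
so the first coordinate already lies in $\langle 1_{K^\times/N_{L_0/K}L_0^\times}\rangle$. For the second coordinate I would invoke Fesenko's ramification theorem (Theorem~5.8 of \cite{ikeda-serbest}) for $L/L_0$, with the Lubin-Tate splitting $\varphi'=\varphi^d$ over $L_0$ (legitimate by Remark~\ref{key-remark}), which gives
\begin{multline*}
\phi_{L/L_0}^{(\varphi')}\big(\text{Gal}(L/L_0)_n-\text{Gal}(L/L_0)_{n+1}\big)\subseteq \\
\big(U_{\widetilde{\mathbb X}(L/L_0)}^\diamond\big)^{n}U_{\mathbb X(L/L_0)}/U_{\mathbb X(L/L_0)}-\big(U_{\widetilde{\mathbb X}(L/L_0)}^\diamond\big)^{n+1}U_{\mathbb X(L/L_0)}/U_{\mathbb X(L/L_0)}.
\end{multline*}
Transporting along $\widetilde{\mathbb X}(L/K)=\widetilde{\mathbb X}(L/L_0)$, $\mathbb X(L/K)\simeq\mathbb X(L/L_0)$, $U_{\widetilde{\mathbb X}(L/K)}^\diamond=U_{\widetilde{\mathbb X}(L/L_0)}^\diamond$ and the resulting identification of the higher unit filtration (\ref{diamond-group-i}), and combining with the triviality of the first coordinate, gives exactly the asserted inclusion.

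The step I expect to need the most care, and where an error would most likely creep in, is the ramification-theoretic bookkeeping of the second paragraph: confirming $w(\psi_{L/K}\circ\varphi_{L/L_0}(n),L/K)=n$, checking that the lower-numbering filtration of $L/L_0$ occurring in Theorem~5.8 of \cite{ikeda-serbest} is literally the one produced by Lemma~\ref{directproduct-lemma} (this is where the precise normalization of the Hasse-Herbrand function of eq.~(3.1) of \cite{ikeda-serbest} enters, together with the transitivity of the Hasse-Herbrand functions along $K\subset L_0\subset L$ for the finite unramified step $L_0/K$), and that forming set differences is compatible with the product decomposition. Once this reduction is in place, everything else is a transcription of Fesenko's theorem through the field-of-norms identifications, with the $\text{Gal}(L_0/K)$-coordinate contributing nothing.
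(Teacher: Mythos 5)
Your proposal is correct and follows essentially the same route as the paper's proof: both reduce to Fesenko's ramification theorem (Theorem~5.8 of \cite{ikeda-serbest}) for $L/L_0$ with the splitting $\varphi'=\varphi^d$, after observing that any $\tau\in\text{Gal}(L/K)_u$ restricts trivially to $L_0$, so that $\pmb{\phi}_{L/K}^{(\varphi)}(\tau)=\bigl(1_{K^\times/N_{L_0/K}L_0^\times},\phi_{L/L_0}^{(\varphi')}(\tau)\bigr)$, and identifying $\text{Gal}(L/K)_{\psi_{L/K}\circ\varphi_{L/L_0}(n)}$ with $\text{Gal}(L/L_0)_n$ (the paper does this via the chain $\text{Gal}(L/K)_u=\text{Gal}(L/L_0)^{\varphi_{L/K}(u)}=\text{Gal}(L/L_0)^{\varphi_{L/L_0}(n)}=\text{Gal}(L/L_0)_n$, which is the same bookkeeping you carry out with $w(u,L/K)=n$).
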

\begin{proof}
Now, we start with the following general observation.
Let $0\leq u\in\mathbb R$.
Let $\tau\in\text{Gal}(L/K)_u=\text{Gal}(L/L_0)^{\varphi_{L/K}(u)}$. 
Then, by the definition of the generalized
arrow $\pmb{\phi}_{L/K}^{(\varphi)}$ reformulated as in 
Remark \ref{generalized-arrow-definition-new},
\begin{equation*}
\pmb{\phi}_{L/K}^{(\varphi)}(\tau) =
\left(\pi_K^m.N_{L_0/K}L_0^\times,
\phi_{L/L_0}^{(\varphi')}(\varphi^{-m}\tau)\right),
\end{equation*}
where $\tau\mid_{L_0}=\varphi^m\mid_{L_0}$, for some $0\leq m\in\mathbb Z$ 
satisfying $d\mid m$, as $\tau\in\text{Gal}(L/K)_u$ and 
$\tau\mid_{L_0}=t_{L_0}^L(\varphi_{L/K}(u))(\tau)\in
\text{Gal}(L_0/K)^{\varphi_{L/K}(u)}=\left<\text{id}_{L_0}\right>$. Thus,
\begin{equation*}
\pmb{\phi}_{L/K}^{(\varphi)}(\tau) =
\left(1_{K^\times/N_{L_0/K}L_0^\times},
\phi_{L/L_0}^{(\varphi')}(\varphi^{-m}\tau)\right),
\end{equation*}
as $m=dm'$ and thereby $\pi_K^{dm'}N_{L_0/K}L_0^\times = N_{L_0/K}L_0^\times =
1_{K^\times/N_{L_0/K}L_0^\times}$, since $N_{L_0/K}\pi_K^{m'}=\pi_K^m$. 
Therefore,
\begin{equation*}
\pmb{\phi}_{L/K}^{(\varphi)}(\tau) = 
\left(1_{K^\times/N_{L_0/K}L_0^\times}, 
\phi_{L/L_0}^{(\varphi')}(\tau)\right),
\end{equation*}
since $\varphi^{-m}\tau=\tau$ in $\text{Gal}(L/L_0)$, as $d\mid m$, and
$L\subset K_{\varphi^d}$.

Now, to prove the theorem, let
$u=\psi_{L/K}\circ\varphi_{L/L_0}(n)$
and $u'=\psi_{L/K}\circ\varphi_{L/L_0}(n+1)$. Then, for
any $\tau\in\text{Gal}(L/K)_u-\text{Gal}(L/K)_{u'}$,
it follows from Fesenko ramification theorem 
(cf. Theorem 5.8 in \cite{ikeda-serbest}) that
the second coordinate of $\pmb{\phi}_{L/K}^{(\varphi)}(\tau)$
satisfies 
\begin{equation*}
\phi_{L/L_0}^{(\varphi')}(\tau)\in
\left(U_{\widetilde{\mathbb X}{(L/K)}}^\diamond\right)^{n}U_{\mathbb X(L/K)}
/U_{\mathbb X(L/K)}-
\left(U_{\widetilde{\mathbb X}{(L/K)}}^\diamond\right)^{n+1}U_{\mathbb X(L/K)}
/U_{\mathbb X(L/K)} ,
\end{equation*}
since 
\begin{equation*}
\text{Gal}(L/K)_u=\text{Gal}(L/L_0)^{\varphi_{L/K}(u)}
=\text{Gal}(L/L_0)^{\varphi_{L/L_0}(n)}=\text{Gal}(L/L_0)_n
\end{equation*} 
and likewise
\begin{equation*}
\text{Gal}(L/K)_{u'}=\text{Gal}(L/L_0)^{\varphi_{L/K}(u')}
=\text{Gal}(L/L_0)^{\varphi_{L/L_0}(n+1)}=\text{Gal}(L/L_0)_{n+1},
\end{equation*}
which completes the proof.
\end{proof}
Now, let $M/K$ be an infinite Galois sub-extension of $L/K$. Thus, by 
Lemma 3.3 of \cite{ikeda-serbest}, $M$ is an $APF$-Galois extension over $K$.
We further assume that, the residue-class degree $[\kappa_M:\kappa_K]=d'$ and 
$K\subset M\subset K_{\varphi^{d'}}$ for some $d'\mid d$.
Let
\begin{equation*}
\pmb{\phi}_{M/K}^{(\varphi)}:\text{Gal}(M/K)\rightarrow
K^\times/N_{M_0/K}M_0^\times
\times U_{\widetilde{\mathbb X}(M/K)}^\diamond/U_{\mathbb X(M/K)}
\end{equation*}
be the corresponding generalized arrow defined for the extension $M/K$.
Here, $M_0$ is defined by $M_0=M\cap K^{nr}=K_{d'}^{nr}$.

Now, let
\begin{equation*}
K\subset L_o=E_o\subset E_1\subset\cdots\subset E_i\subset\cdots\subset L
\end{equation*}
be an ascending chain satisfying $L=\bigcup_{0\leq i\in\mathbb Z}E_i$
and $[E_{i+1}:E_i]<\infty$ for every $0\leq i\in\mathbb Z$. Then
\begin{equation*}
K\subset M_o=E_o\cap M\subseteq E_1\cap M\subseteq\cdots\subseteq E_i\cap M
\subseteq\cdots\subset M
\end{equation*}
is an ascending chain of field extensions satisfying the conditions
$M=\bigcup_{0\leq i\in\mathbb Z}(E_i\cap M)$ and also 
$[E_{i+1}\cap M:E_i\cap M]<\infty$ for every $0\leq i\in\mathbb Z$.  
Thus, we construct $\mathbb X(M/K)$ by the sequence 
$(E_i\cap M)_{0\leq i\in\mathbb Z}$ and $\widetilde{\mathbb X}(M/K)$ by
the sequence $(\widetilde{E_i\cap M})_{0\leq i\in\mathbb Z}$.
Note that, $E_i\cap M\neq E_i$ for every $0\leq i\in\mathbb Z$.
Furthermore, the commutative square
\begin{equation*}
\SelectTips{cm}{}\xymatrix{
{\widetilde{E}_i^\times}
\ar[d]_{\prod_{0\leq\ell\lneq f(L/M)}(\varphi^{d'})^\ell
\widetilde{N}_{E_i/E_i\cap M}} & & {\widetilde{E}_{i'}^\times}
\ar[ll]_{\widetilde{N}_{E_{i'}/E_i}}
\ar[d]^{\prod_{0\leq\ell\lneq f(L/M)}(\varphi^{d'})^\ell
\widetilde{N}_{E_{i'}/E_{i'}\cap M}} \\
{\widetilde{E_i\cap M}^\times} & & {\widetilde{E_{i'}\cap M}^\times}
\ar[ll]_{\widetilde{N}_{E_{i'}\cap M/E_i\cap M}}
}
\end{equation*}
for every pair $0\leq i,i'\in\mathbb Z$ satisfying $i\leq i'$, induces the 
group homomorphism
\begin{equation}
\label{coleman-norm-map}
\widetilde{\mathcal N}_{L/M}=
\varprojlim_{0\leq i\in\mathbb Z}
\left(\prod_{0\leq\ell\lneq f(L/M)}(\varphi^{d'})^\ell
\widetilde{N}_{E_i/E_i\cap M}\right) : \widetilde{\mathbb X}(L/K)^\times
\rightarrow
\widetilde{\mathbb X}(M/K)^\times
\end{equation}
defined by
\begin{equation}
\label{coleman-norm-map-definition}
\widetilde{\mathcal N}_{L/M}
\left((\alpha_{\widetilde{E}_i})_{0\leq i\in\mathbb Z}\right)= 
\left(\prod_{0\leq\ell\lneq f(L/M)}(\varphi^{d'})^\ell
\widetilde{N}_{E_i/E_i\cap M}(\alpha_{\widetilde{E}_i})\right)
_{0\leq i\in\mathbb Z},
\end{equation}
for every $(\alpha_{\widetilde{E}_i})_{0\leq i\in\mathbb Z}
\in\widetilde{\mathbb X}(L/K)^\times$.
\begin{remark}
\label{chain-independence}
The group homomorphism
\begin{equation*}
\widetilde{\mathcal N}_{L/M} : \widetilde{\mathbb X}(L/K)^\times\rightarrow
\widetilde{\mathbb X}(M/K)^\times
\end{equation*}
defined by eq.s (\ref{coleman-norm-map}) and 
(\ref{coleman-norm-map-definition}) does \textit{not} depend on the choice 
of the ascending chain 
\begin{equation*}
K\subset L_o=E_o\subset E_1\subset\cdots\subset E_i\subset\cdots\subset L
\end{equation*}
satisfying $L=\bigcup_{0\leq i\in\mathbb Z}E_i$ and $[E_{i+1}:E_i]<\infty$ 
for every $0\leq i\in\mathbb Z$.
\end{remark}
\begin{remark}
\label{norm-norm}
For $0\leq i\in\mathbb Z$, let 
$E_{i,0}^{(E_i\cap M)}=E_i\cap (E_i\cap M)^{nr}$ be the maximal unramified
extension of $E_i\cap M$ inside $E_i$. To simplify the notation, let
$E_{i,0}=E_i\cap (E_i\cap M)^{nr}$. Then the Galois group 
$\text{Gal}(E_{i,0}/E_i\cap M)$ is cyclic of order $f(L/M)=\frac{d}{d'}$ 
generated by $\varphi^{d'}$. Thus, for $\alpha\in E_i$,
\begin{equation*}
N_{E_i/E_i\cap M}(\alpha)=\widetilde{N}_{E_i/E_i\cap M}(\alpha)
^{1+\varphi^{d'}+\cdots+\varphi^{d'(f(L/M)-1)}}.
\end{equation*}
\end{remark}
The basic properties of this group homomorphism are the following.
\begin{itemize}
\item[(i)]
If $U=(u_{\widetilde{E}_i})_{0\leq i\in\mathbb Z}
\in U_{\widetilde{\mathbb X}(L/K)}$, then
$\widetilde{\mathcal N}_{L/M}(U)\in U_{\widetilde{\mathbb X}(M/K)}$.
\begin{proof}
In fact, following the definition of the valuation 
$\nu_{\widetilde{\mathbb X}(M/K)}$ of $\widetilde{\mathbb X}(M/K)$ and
the definition of the valuation $\nu_{\widetilde{\mathbb X}(L/K)}$ of
$\widetilde{\mathbb X}(L/K)$, it follows that
\begin{equation*}
\begin{aligned}
\nu_{\widetilde{\mathbb X}(M/K)}
\left(\widetilde{\mathcal N}_{L/M}(U)\right) & =
\nu_{\widetilde{\mathbb X}(M/K)}
\left(\left(\prod_{0\leq\ell\lneq f(L/M)}(\varphi^{d'})^\ell
\widetilde{N}_{E_i/E_i\cap M}(u_{\widetilde{E}_i})\right)
_{0\leq i\in\mathbb Z}\right) \\
& = \nu_{\widetilde{K}}\left(\prod_{0\leq\ell\lneq f(L/M)}(\varphi^{d'})
^\ell (u_{\widetilde{K}})\right) \\
& = \sum_{0\leq\ell\lneq f(L/M)}\nu_{\widetilde{K}}\left((\varphi^{d'})^\ell 
(u_{\widetilde{K}})\right) \\
& = \sum_{0\leq\ell\lneq f(L/M)}\nu_{\widetilde{K}}(u_{\widetilde{K}}) \\
& = 0,
\end{aligned}
\end{equation*}
as 
$\nu_{\widetilde{K}}\left((\varphi^{d'})^\ell(u_{\widetilde{K}})\right)=
\nu_{\widetilde{K}}(u_{\widetilde{K}})$ for 
$\ell = 1,\cdots,f(L/M)-1$, and
\begin{equation*}
\nu_{\widetilde{\mathbb X}(L/K)}(U)=
\nu_{\widetilde{K}}(u_{\widetilde{K}})=0,
\end{equation*}
since $U\in U_{\widetilde{\mathbb X}(L/K)}$. 
\end{proof}
\item[(ii)]
If $U=(u_{\widetilde{E}_i})_{0\leq i\in\mathbb Z}
\in U_{\widetilde{\mathbb X}(L/K)}^\diamond$, 
then $\widetilde{\mathcal N}_{L/M}(U)
\in U_{\widetilde{\mathbb X}(M/K)}^\diamond$.
\begin{proof}
Note that, $\widetilde{L}_0=\widetilde{K}$ and 
$\widetilde{M}_0=\widetilde{K}$. Now, the assertion follows by 
observing that
\begin{equation*}
\text{Pr}_{\widetilde{K}}(U)=u_{\widetilde{K}}\in U_{L_0}
\end{equation*} 
and
\begin{equation*} 
\begin{aligned}
\text{Pr}_{\widetilde{K}}\left(\widetilde{\mathcal N}_{L/M}(U)\right)&=
\prod_{0\leq\ell\lneq f(L/M)}(\varphi^{d'})^\ell
\widetilde{N}_{E_o/E_o\cap M}(u_{\widetilde{E}_o})\\
&=\prod_{0\leq\ell\lneq f(L/M)}(\varphi^{d'})^\ell u_{\widetilde{K}}\\
&=N_{E_o/E_o\cap M}(u_{\widetilde{K}})\in U_{M_0}.
\end{aligned}
\end{equation*} 
\end{proof}
\item[(iii)]
If $U=(u_{E_i})_{0\leq i\in\mathbb Z}\in U_{{\mathbb X}(L/K)}$, then 
$\widetilde{\mathcal N}_{L/M}(U)\in U_{{\mathbb X}(M/K)}$.
\begin{proof}
The assertion follows by the definition 
eq. (\ref{coleman-norm-map-definition}) of the 
homomorphism eq. (\ref{coleman-norm-map}) combined with the fact that 
$\widetilde{N}_{E_i/E_i\cap M}(u_{E_i})^{1+\varphi^{d'}+\cdots
+\varphi^{d'(f(L/M)-1)}}=N_{E_i/E_i\cap M}(u_{E_i})$
for every $u_{E_i}\in U_{E_i}$ and for every $0\leq i\in\mathbb Z$ by Remark
\ref{norm-norm}.
\end{proof}
\end{itemize}
Note that,
$\widetilde{N}_{E_i/E_{i-1}}\left(\alpha^{1+\varphi^{d'}+\cdots+
\varphi^{d'(f-1)}}\right)=
\widetilde{N}_{E_i/E_{i-1}}\left(\alpha\right)^{1+\varphi^{d'}+\cdots+
\varphi^{d'(f-1)}}$, 
for any $\alpha\in\widetilde{E}_i$ with $1\leq i\in\mathbb Z$,
where $f=f(L/M)$. Thus, there exists a homomorphism
\begin{equation}
\label{diamond-frobenius}
\left<\varphi\right>_{L/M}:\widetilde{\mathbb X}(L/L_0)^\times\rightarrow
\widetilde{\mathbb X}(L/L_0)^\times
\end{equation}
defined by
\begin{equation}
\label{diamond-frobenius-definition}
\left<\varphi\right>_{L/M}:\left(\alpha_{\widetilde{E}_i}\right)
_{0\leq i\in\mathbb Z}\mapsto\left(\alpha_{\widetilde{E}_i}^{1+\varphi^{d'}
+\cdots+\varphi^{d'(f-1)}}\right)_{0\leq i\in\mathbb Z},
\end{equation}
for every $\left(\alpha_{\widetilde{E}_i}\right)_{0\leq i\in\mathbb Z}\in
\widetilde{\mathbb X}(L/L_0)^\times$. The basic properties of this group 
homomorphism are the following.
\begin{itemize}
\item[(i)] 
$\left<\varphi\right>_{L/M}\left(U_{\widetilde{\mathbb X}(L/L_0)}\right)
\subseteq U_{\widetilde{\mathbb X}(L/L_0)}$.
\begin{proof}
In fact, following the definition of the valuation 
 $\nu_{\widetilde{\mathbb X}(L/L_0)}$ of $\widetilde{\mathbb X}(L/L_0)$, 
it follows that
\begin{equation*}
\begin{aligned}
\nu_{\widetilde{\mathbb X}(L/L_0)}
\left(\left<\varphi\right>_{L/M}(U)\right) & =
\nu_{\widetilde{\mathbb X}(L/L_0)}
\left(
u_{\widetilde{E}_i}^{1+\varphi^{d'}+\cdots+\varphi^{d'(f(L/M)-1)}}\right)
_{0\leq i\in\mathbb Z} \\
& = \nu_{\widetilde{K}}\left(u_{\widetilde{K}}^{1+\varphi^{d'}+\cdots
+\varphi^{d'(f(L/M)-1)}}\right) \\
& = \sum_{0\leq\ell\lneq f(L/M)}\nu_{\widetilde{K}}\left((\varphi^{d'})^\ell 
(u_{\widetilde{K}})\right) \\
& = \sum_{0\leq\ell\lneq f(L/M)}\nu_{\widetilde{K}}(u_{\widetilde{K}}) \\
& = 0,
\end{aligned}
\end{equation*}
as 
$\nu_{\widetilde{K}}\left((\varphi^{d'})^\ell(u_{\widetilde{K}})\right)=
\nu_{\widetilde{K}}(u_{\widetilde{K}})$ for 
$\ell = 1,\cdots,f(L/M)-1$, and
\begin{equation*}
\nu_{\widetilde{\mathbb X}(L/K)}(U)=
\nu_{\widetilde{K}}(u_{\widetilde{K}})=0,
\end{equation*}
since $U\in U_{\widetilde{\mathbb X}(L/K)}$. 
\end{proof}
\item[(ii)]
$\left<\varphi\right>_{L/M}\left(U^\diamond_{\widetilde{\mathbb X}(L/L_0)}
\right)\subseteq U^\diamond_{\widetilde{\mathbb X}(L/L_0)}$.
\begin{proof}
Note that, $\widetilde{L}_0=\widetilde{K}$. Now, the assertion follows by 
observing that
\begin{equation*}
\text{Pr}_{\widetilde{K}}(U)=u_{\widetilde{K}}\in U_{L_0}
\end{equation*} 
and
\begin{equation*} 
\begin{aligned}
\text{Pr}_{\widetilde{K}}\left(\left<\varphi\right>_{L/M}(U)\right)&=
u_{\widetilde{K}}^{1+\varphi^{d'}+\cdots+\varphi^{d'(f(L/M)-1)}}\\
&=\prod_{0\leq\ell\lneq f(L/M)}(\varphi^{d'})^\ell u_{\widetilde{K}}\\
&=N_{E_o/E_o\cap M}(u_{\widetilde{K}})\in U_{M_0}\subseteq U_{L_0}.
\end{aligned}
\end{equation*} 
\end{proof}
\item[(iii)]
$\left<\varphi\right>_{L/M}\left(U_{{\mathbb X}(L/L_0)}\right)\subseteq 
U_{{\mathbb X}(L/L_0)}$.
\begin{proof}
Clearly, for $U=(u_{E_i})_{0\leq i\in\mathbb Z}\in U_{\mathbb X(L/L_o)}$,
\begin{equation*}
\left<\varphi\right>_{L/M}(U)=(u_{E_i}^{1+\varphi^{d'}+\cdots+
\varphi^{d'(f(L/M)-1)}})_{0\leq i\in\mathbb Z}\in
U_{\mathbb X(L/L_o)},
\end{equation*}
as $u_{E_i}\in U_{E_i}$ for every $0\leq i\in\mathbb Z$.
\end{proof}
\end{itemize}
Thus, there exists a group homomorphism 
\begin{equation}
\label{coleman-norm-0}
\widetilde{\mathcal N}_{L/M}\circ\left<\varphi\right>_{L/M}:
\widetilde{\mathbb X}(L/K)^\times\rightarrow\widetilde{\mathbb X}(M/K)^\times
\end{equation}
satisfying
\begin{itemize}
\item[(i)]
$\widetilde{\mathcal N}_{L/M}\circ\left<\varphi\right>_{L/M}
\left(U_{\widetilde{\mathbb X}(L/K)}\right)
\subseteq U_{\widetilde{\mathbb X}(M/K)}$;
\item[(ii)]
$\widetilde{\mathcal N}_{L/M}\circ\left<\varphi\right>_{L/M}
\left(U^\diamond_{\widetilde{\mathbb X}(L/K)}\right)
\subseteq U^\diamond_{\widetilde{\mathbb X}(M/K)}$;
\item[(iii)]
$\widetilde{\mathcal N}_{L/M}\circ\left<\varphi\right>_{L/M}
\left(U_{{\mathbb X}(L/K)}\right)
\subseteq U_{{\mathbb X}(M/K)}$.
\end{itemize}
Now, define the \textit{Coleman norm map}
\begin{equation}
\label{coleman-norm-1}
\widetilde{\mathcal N}_{L/M}^{\text{Coleman}} : 
U_{\widetilde{\mathbb X}(L/K)}^\diamond/U_{\mathbb X(L/K)}\rightarrow 
U_{\widetilde{\mathbb X}(M/K)}^\diamond/U_{\mathbb X(M/K)}
\end{equation}
from $L$ to $M$ by
\begin{equation}
\label{coleman-norm-1-def}
\widetilde{\mathcal N}_{L/M}^{\text{Coleman}}(\overline{U}) =
\widetilde{\mathcal N}_{L/M}\circ\left<\varphi\right>_{L/M}(U). 
U_{\mathbb X(M/K)},
\end{equation}
for every $U\in U_{\widetilde{\mathbb X}(L/K)}^\diamond$, where $\overline{U}$
denotes, as before, the coset $U.U_{\mathbb X(L/K)}$ in 
$U_{\widetilde{\mathbb X}(L/K)}^\diamond/U_{\mathbb X(L/K)}$.
\begin{lemma}
\label{square1-preliminary}
For an infinite Galois sub-extension $M/K$ of $L/K$ such that the 
residue-class degree $[\kappa_M:\kappa_K]=d'$ and 
$K\subset M\subset K_{\varphi^{d'}}$ for some $d'\mid d$,
the square
\begin{equation}
\label{square-L/M-U}
\SelectTips{cm}{}\xymatrix{
{\text{Gal}(L/L_0)}\ar[r]^-{\phi_{L/L_0}^{(\varphi^d)}}
\ar[d]_{\text{res}_M} & 
{U_{\widetilde{\mathbb X}{(L/L_0)}}^\diamond/U_{\mathbb X(L/L_0)}}
\ar[d]^{\widetilde{\mathcal N}_{L/M}^{\text{Coleman}}} \\
{\text{Gal}(M/M_0)}\ar[r]^-{\phi_{M/M_0}^{(\varphi^{d'})}} & 
{U_{\widetilde{\mathbb X}{(M/M_0)}}^\diamond/U_{\mathbb X(M/M_0)}},
}
\end{equation}
where the right-vertical arrow is the Coleman norm map 
$\widetilde{\mathcal N}_{L/M}^{\text{Coleman}}$ from $L$ to $M$ 
defined by eq.s 
(\ref{coleman-norm-1}) and (\ref{coleman-norm-1-def}), is commutative.
\end{lemma}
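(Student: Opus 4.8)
The plan is to reduce the commutativity of \eqref{square-L/M-U} to the defining functional equation of the Fesenko arrow attached to $M/M_0$ and then appeal to its uniqueness part. First note that $M_0=M\cap K^{nr}\subseteq L_0=L\cap K^{nr}$, so $\mathrm{res}_M$ does carry $\mathrm{Gal}(L/L_0)$ into $\mathrm{Gal}(M/M_0)$; and by the $M$-analogue of Remark~\ref{key-remark}, $\varphi''=\varphi^{d'}$ is a Lubin--Tate splitting over $M_0$ and $M/M_0$ is totally ramified $APF$ with $M_0\subseteq M\subseteq (M_0)_{\varphi''}$, so $\phi_{M/M_0}^{(\varphi^{d'})}$ is well defined. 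Fix $\sigma\in\mathrm{Gal}(L/L_0)$, put $\bar\sigma=\sigma\mid_M$, and let $U\in U_{\widetilde{\mathbb X}(L/L_0)}^\diamond$ be a solution of $U^{1-\varphi^d}=\Pi_{\varphi';L/L_0}^{\sigma-1}$, so that $\phi_{L/L_0}^{(\varphi^d)}(\sigma)=\overline U$. By properties (i)--(iii) of $\widetilde{\mathcal N}_{L/M}\circ\langle\varphi\rangle_{L/M}$ established above, $W:=\widetilde{\mathcal N}_{L/M}(\langle\varphi\rangle_{L/M}(U))$ lies in $U_{\widetilde{\mathbb X}(M/M_0)}^\diamond$, its class $\overline W$ modulo $U_{\mathbb X(M/M_0)}$ depends only on $\overline U$, and by \eqref{coleman-norm-1-def} one has $\overline W=\widetilde{\mathcal N}_{L/M}^{\mathrm{Coleman}}(\overline U)$. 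Since, by the properties of the Fesenko arrow recalled in \cite{ikeda-serbest}, $\phi_{M/M_0}^{(\varphi^{d'})}(\bar\sigma)$ is the unique coset in $U_{\widetilde{\mathbb X}(M/M_0)}^\diamond/U_{\mathbb X(M/M_0)}$ represented by an element $W'$ with $(W')^{1-\varphi^{d'}}=\Pi_{\varphi'';M/M_0}^{\bar\sigma-1}$, it suffices to prove $W^{1-\varphi^{d'}}\equiv\Pi_{\varphi'';M/M_0}^{\bar\sigma-1}\pmod{U_{\mathbb X(M/M_0)}}$.

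The next step is a formal manipulation with the operator $\langle\varphi\rangle_{L/M}$. Writing $f=f(L/M)=d/d'$, in the (commutative) group ring $\mathbb Z[\langle\varphi\rangle]$ one has $(1-\varphi^{d'})(1+\varphi^{d'}+\cdots+\varphi^{d'(f-1)})=1-\varphi^{d'f}=1-\varphi^d$; since by \eqref{diamond-frobenius-definition} $\langle\varphi\rangle_{L/M}$ raises each coordinate to the exponent $1+\varphi^{d'}+\cdots+\varphi^{d'(f-1)}$, it follows that $(\langle\varphi\rangle_{L/M}(U))^{1-\varphi^{d'}}=U^{1-\varphi^d}=\Pi_{\varphi';L/L_0}^{\sigma-1}$ in $\widetilde{\mathbb X}(L/L_0)^\times$. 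Now apply $\widetilde{\mathcal N}_{L/M}$: it commutes with $\varphi^{d'}$ — immediate from \eqref{coleman-norm-map-definition}, since $\varphi^{d'}$ commutes with each $(\varphi^{d'})^\ell$ and with the norm maps $\widetilde N_{E_i/E_i\cap M}$ on the completions — and it is equivariant for the respective Galois actions, i.e. $\widetilde{\mathcal N}_{L/M}(\alpha^\tau)=\widetilde{\mathcal N}_{L/M}(\alpha)^{\tau\mid_M}$ for $\tau\in\mathrm{Gal}(L/L_0)$, which reduces to the equivariance of the $\widetilde N_{E_i/E_i\cap M}$ together with the compatibility of the $\mathrm{Gal}(L/L_0)$- and $\mathrm{Gal}(M/M_0)$-actions on the fields of norms and their completions (note $\tau$ fixes each $E_i\cap M$, so the norm and $\tau$ genuinely commute). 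Hence $W^{1-\varphi^{d'}}=\widetilde{\mathcal N}_{L/M}(\Pi_{\varphi';L/L_0}^{\sigma-1})=\widetilde{\mathcal N}_{L/M}(\Pi_{\varphi';L/L_0})^{\bar\sigma-1}$.

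It remains to identify $\widetilde{\mathcal N}_{L/M}(\Pi_{\varphi';L/L_0})$ with $\Pi_{\varphi'';M/M_0}$ modulo a factor in $U_{\mathbb X(M/M_0)}$ fixed by $\bar\sigma$, and this is the step where I expect the real work. By Lemmas~\ref{unramified-L/L_0-over-T/K} and~\ref{lemma-prime-equality} together with their $M$-analogues, $\Pi_{\varphi';L/L_0}=\Pi_{\varphi;T/K}=(\pi_E)_{K\subseteq E\subseteq T}$ and $\Pi_{\varphi'';M/M_0}=\Pi_{\varphi;S/K}=(\pi_E)_{K\subseteq E\subseteq S}$ with $S=M\cap K_\varphi\subseteq T$, and both are extracted from one and the same canonical norm-compatible Lubin--Tate labelling $(\pi_E)_{K\subseteq E\subseteq K_\varphi}$; so the claim is precisely the norm-compatibility of these distinguished prime elements along the Coleman-type map $\widetilde{\mathcal N}_{L/M}$. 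Using Remark~\ref{norm-norm} to replace $\widetilde N_{E_i/E_i\cap M}^{\,1+\varphi^{d'}+\cdots+\varphi^{d'(f-1)}}$ by the genuine field norm $N_{E_i/E_i\cap M}$ on the $\mathbb X(L/L_0)$-coordinates, and then invoking $N_{E/E'}\pi_E=\pi_{E'}$ for the canonical labelling, one verifies coordinate by coordinate that $\widetilde{\mathcal N}_{L/M}(\Pi_{\varphi';L/L_0})$ and $\Pi_{\varphi'';M/M_0}$ agree modulo $U_{\mathbb X(M/M_0)}$; the delicate point is matching the Frobenius twists $\prod_{0\le\ell<f}(\varphi^{d'})^\ell$ against the residue degree $f$ of $\mathbb X(L/K)/\mathbb X(M/K)$, which is exactly the reason $\langle\varphi\rangle_{L/M}$ was built into \eqref{coleman-norm-1-def}. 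Granting this, $W^{1-\varphi^{d'}}\equiv\Pi_{\varphi'';M/M_0}^{\bar\sigma-1}\pmod{U_{\mathbb X(M/M_0)}}$, and since the canonical prime is fixed by $\bar\sigma$ (and its $\widetilde K$-coordinate by $N_{M_0/K}$), the uniqueness recalled in \cite{ikeda-serbest} forces $\overline W=\phi_{M/M_0}^{(\varphi^{d'})}(\bar\sigma)$, which is the asserted commutativity of \eqref{square-L/M-U}.
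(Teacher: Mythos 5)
Your proposal is correct and follows essentially the same route as the paper's proof: reduce to verifying that $\widetilde{\mathcal N}_{L/M}\circ\left<\varphi\right>_{L/M}(U_\sigma)$ satisfies the defining equation $W^{1-\varphi^{d'}}=\Pi_{\varphi^{d'};M/M_0}^{\sigma\mid_M-1}$, using the telescoping identity $(1+\varphi^{d'}+\cdots+\varphi^{d'(f-1)})(1-\varphi^{d'})=1-\varphi^{d}$, Remark \ref{norm-norm} to convert the twisted norms into genuine field norms, the norm-compatibility of the canonical Lubin--Tate primes, and finally uniqueness modulo $U_{\mathbb X(M/M_0)}$. The only difference is bookkeeping: you factor out $\sigma-1$ by Galois-equivariance of $\widetilde{\mathcal N}_{L/M}$ and then identify $\widetilde{\mathcal N}_{L/M}(\Pi_{\varphi^d;L/L_0})$ with $\Pi_{\varphi^{d'};M/M_0}$, whereas the paper performs the same coordinate-wise computation directly on $\Pi_{\varphi^d;L/L_0}^{\sigma-1}$, obtaining $\pi_{E_i\cap M}^{\sigma\mid_M-1}$ in each coordinate.
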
 
\begin{proof}
For $\sigma\in\text{Gal}(L/L_0)$, 
$\text{res}_M(\sigma)=\sigma\mid_M\in\text{Gal}(M/M_0)$, 
as $L_0\cap M=L\cap K^{nr}\cap M=M\cap K^{nr}=M_0$.
Now, for any $\sigma\in\text{Gal}(L/L_0)$, following the definition
$\phi_{L/L_0}^{(\varphi^d)}(\sigma)=U_\sigma . U_{\mathbb X(L/L_0)}$, 
where $U_\sigma\in U_{\widetilde{\mathbb X}(L/L_0)}^\diamond$ satisfies
the equation $U_\sigma^{1-\varphi^d}=\Pi_{\varphi^d ; L/L_0}^{\sigma-1}$.
Thus, to prove the commutativity of the square, it suffices to prove that
\begin{equation*}
\widetilde{\mathcal N}_{L/M}(U_\sigma^{1+\varphi^{d'}+\cdots+\varphi
^{d'(f(L/M)-1)}})\equiv U_{\sigma\mid_M}\pmod{U_{\mathbb X(M/M_0)}},
\end{equation*}
where $U_{\sigma\mid_M}\in U_{\widetilde{\mathbb X}(M/M_0)}^\diamond$ satisfies
the equation $U_{\sigma\mid_M}^{1-\varphi^{d'}}
=\Pi_{\varphi^{d'} ; M/M_0}^{\sigma\mid_M-1}$. Now, without loss of
generality, in view of Remark \ref{chain-independence}, fix a 
\textit{basic sequence} (cf. \cite{ikeda-serbest})
\begin{equation*}
L_0=E_0\subset E_1\subset\cdots\subset E_i\subset\cdots\subset L ,
\end{equation*}
where 
\begin{itemize}
\item[(i)]
$L=\bigcup_{0\leq i\in\mathbb Z}E_i$;
\item[(ii)]
$E_i/L_0$ is a Galois extension for every $0\leq i\in\mathbb Z$;
\item[(iii)]
$E_{i+1}/E_i$ is cyclic of prime degree 
$[E_{i+1}:E_i]=p=\text{char}(\kappa_{L_0})$ for each $1\leq i\in\mathbb Z$;
\item[(iv)]
$E_1/E_0$ is cyclic of degree relatively prime to $p$. 
\end{itemize}
Thus, each extension $E_i/L_0$ is finite and Galois for $0\leq i\in\mathbb Z$.
Now, note that
\begin{equation*}
\widetilde{\mathcal N}_{L/M}(U_\sigma^{1+\varphi^{d'}+\cdots+
\varphi^{d'(f(L/M)-1)}})^{1-\varphi^{d'}} 
=\widetilde{\mathcal N}_{L/M}(U_\sigma)^{1-\varphi^d}
=\widetilde{\mathcal N}_{L/M}(U_\sigma^{1-\varphi^d}).
\end{equation*}
As $U_\sigma^{1-\varphi^d}=\Pi_{\varphi^d ; L/L_0}^{\sigma-1}$, 
setting $U_\sigma=(u_{\widetilde{E}_i})_{0\leq i\in\mathbb Z}$, for 
$0\leq i\in\mathbb Z$,
\begin{equation*}
\begin{aligned}
\widetilde{\mathcal N}_{L/M}(U_\sigma^{1+\varphi^{d'}+\cdots+
\varphi^{d'(f(L/M)-1)}})_i^{1-\varphi^{d'}}& =
\widetilde{\mathcal N}_{L/M}\left(\Pi_{\varphi^d ; L/L_0}^{\sigma-1}\right)_i\\
&=\widetilde{N}_{E_i/E_i\cap M}(\pi_{E_i}^{\sigma-1})^{1+\varphi^{d'}+\cdots+
\varphi^{d'(f(L/M)-1)}}\\
&=N_{E_i/E_i\cap M}(\pi_{E_i}^{\sigma-1})\\
&=\pi_{E_i\cap M}^{\sigma\mid_{M}-1} .
\end{aligned}
\end{equation*}
Now, it follows that, 
$\widetilde{\mathcal N}_{L/M}\circ\left<\varphi\right>_{L/M}(U_\sigma)
^{1-\varphi^{d'}}=\Pi_{\varphi^{d'};M/M_0}^{\sigma\mid_M-1}$, 
which yields the congruence
$\widetilde{\mathcal N}_{L/M}\circ\left<\varphi\right>_{L/M}(U_\sigma)
\equiv U_{\sigma\mid_M}\pmod{U_{\mathbb X(M/M_0)}}$ completing the proof.
\end{proof}
So, we have the following theorem.
\begin{theorem}
\label{square1}
For an infinite Galois sub-extension $M/K$ of $L/K$ such that the 
residue-class degree $[\kappa_M:\kappa_K]=d'$ and 
$K\subset M\subset K_{\varphi^{d'}}$ for some $d'\mid d$,
the square
\begin{equation*}
\SelectTips{cm}{}\xymatrix{
{\text{Gal}(L/K)}\ar[r]^-{\pmb{\phi}_{L/K}^{(\varphi)}}\ar[d]_{\text{res}_M} & 
{K^\times/N_{L_0/K}L_0^\times
\times
U_{\widetilde{\mathbb X}{(L/K)}}^\diamond
/U_{\mathbb X(L/K)}}\ar[d]^{\left(e_{L_0/M_0}^{\text{CFT}},
\widetilde{\mathcal N}_{L/M}^{\text{Coleman}}\right)} \\
{\text{Gal}(M/K)}\ar[r]^-{\pmb{\phi}_{M/K}^{(\varphi)}} & 
{K^\times/N_{M_0/K}M_0^\times
\times
U_{\widetilde{\mathbb X}{(M/K)}}^\diamond
/U_{\mathbb X(M/K)}},
}
\end{equation*}
where the right-vertical arrow
\begin{equation*}
K^\times/N_{L_0/K}L_0^\times
\times U_{\widetilde{\mathbb X}{(L/K)}}^\diamond/U_{\mathbb X(L/K)}
\xrightarrow{\left(e_{L_0/M_0}^{\text{CFT}},\widetilde{\mathcal N}_{L/M}
^{\text{Coleman}}\right)} 
K^\times/N_{M_0/K}M_0^\times
\times U_{\widetilde{\mathbb X}{(M/K)}}^\diamond/U_{\mathbb X(M/K)}
\end{equation*}
defined by
\begin{equation*}
\left(e_{L_0/M_0}^{\text{CFT}},\widetilde{\mathcal N}_{L/M}
^{\text{Coleman}}\right) :
(\overline{a},\overline{U})\mapsto\left(e_{L_0/M_0}^{\text{CFT}}(\overline{a}),
\widetilde{\mathcal N}_{L/M}^{\text{Coleman}}(\overline{U})\right)
\end{equation*}
for every $(\overline{a},\overline{U})\in K^\times/N_{L_0/K}L_0^\times\times
U_{\widetilde{\mathbb X}(L/K)}^\diamond/U_{\mathbb X(L/K)}$, is commutative.
Here, 
\begin{equation*}
e_{L_0/M_0}^{\text{CFT}}:K^\times/N_{L_0/K}L_0^\times\rightarrow
K^\times/N_{M_0/K}M_0^\times
\end{equation*} 
is the natural inclusion defined via the existence theorem of local 
class field theory.
\end{theorem}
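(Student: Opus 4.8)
The plan is to reduce the commutativity of the large square to the already-established Lemma~\ref{square1-preliminary} together with the compatibility of abelian local class field theory with the norm maps on the unramified parts. Concretely, I would fix $\sigma\in\text{Gal}(L/K)$ with $\sigma\mid_{L_0}=\varphi^m$ for some $0\leq m\in\mathbb Z$, and chase it around both ways using the reformulated definition of the generalized arrow from Remark~\ref{generalized-arrow-definition-new}, namely $\pmb{\phi}_{L/K}^{(\varphi)}(\sigma)=\bigl(\pi_K^m.N_{L_0/K}L_0^\times,\phi_{L/L_0}^{(\varphi')}(\varphi^{-m}\sigma)\bigr)$. The point is that this presents $\pmb{\phi}_{L/K}^{(\varphi)}$ as a pair whose two coordinates are governed by two independent pieces of machinery, so the square splits into two squares, one for each coordinate.

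First I would treat the first coordinate. Here one must check that the diagram
\begin{equation*}
\SelectTips{cm}{}\xymatrix{
{\text{Gal}(L/K)}\ar[r]\ar[d]_{\text{res}_M} & {K^\times/N_{L_0/K}L_0^\times}
\ar[d]^{e_{L_0/M_0}^{\text{CFT}}} \\
{\text{Gal}(M/K)}\ar[r] & {K^\times/N_{M_0/K}M_0^\times}
}
\end{equation*}
commutes, where the horizontal arrows send $\sigma\mapsto\pi_K^m.N_{L_0/K}L_0^\times$ with $\sigma\mid_{L_0}=\varphi^m$. Since $L_0=K_d^{nr}$ and $M_0=K_{d'}^{nr}$ with $d'\mid d$, if $\sigma\mid_{L_0}=\varphi^m$ then $\sigma\mid_{M_0}=(\sigma\mid_M)\mid_{M_0}=\varphi^{m}\mid_{M_0}$, so the same integer $m$ appears; thus both routes produce $\pi_K^m$ modulo the respective norm subgroups, and commutativity is exactly the statement that $e_{L_0/M_0}^{\text{CFT}}$ is the natural inclusion $K^\times/N_{L_0/K}L_0^\times\hookrightarrow K^\times/N_{M_0/K}M_0^\times$ (which makes sense precisely because $N_{L_0/K}L_0^\times\subseteq N_{M_0/K}M_0^\times$ for $d'\mid d$ in the unramified case). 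This coordinate is essentially formal.

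Next I would treat the second coordinate, which is where Lemma~\ref{square1-preliminary} does the work. One needs the outer square
\begin{equation*}
\SelectTips{cm}{}\xymatrix{
{\text{Gal}(L/K)}\ar[r]\ar[d]_{\text{res}_M} &
{U_{\widetilde{\mathbb X}(L/K)}^\diamond/U_{\mathbb X(L/K)}}
\ar[d]^{\widetilde{\mathcal N}_{L/M}^{\text{Coleman}}} \\
{\text{Gal}(M/K)}\ar[r] & {U_{\widetilde{\mathbb X}(M/K)}^\diamond/U_{\mathbb X(M/K)}}
}
\end{equation*}
to commute, where the top arrow is $\sigma\mapsto\phi_{L/L_0}^{(\varphi')}(\varphi^{-m}\sigma)$ and the bottom arrow is $\tau\mapsto\phi_{M/M_0}^{(\varphi^{d'})}(\varphi^{-m'}\tau)$ with $\tau\mid_{M_0}=\varphi^{m'}$. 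The restriction $\sigma\mapsto\sigma\mid_M$ carries $\varphi^{-m}\sigma\in\text{Gal}(L/L_0)$ to $(\varphi^{-m}\sigma)\mid_M=\varphi^{-m'}(\sigma\mid_M)\in\text{Gal}(M/M_0)$ by part~(i) of Remark~\ref{directproduct-diagram} (using $\widetilde{\mathbb X}(L/K)=\widetilde{\mathbb X}(L/L_0)$ and likewise for $M$). Therefore the outer square for the second coordinate is obtained from the square of Lemma~\ref{square1-preliminary} by precomposing the top and bottom arrows with $\sigma\mapsto\varphi^{-m}\sigma$ and $\tau\mapsto\varphi^{-m'}\tau$ respectively, and these precompositions are intertwined correctly by $\text{res}_M$; commutativity then follows directly from Lemma~\ref{square1-preliminary}.

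The main obstacle, and the step deserving the most care, is the bookkeeping of the two Frobenius twists: making sure that the integer $m$ read off from $\sigma\mid_{L_0}$ and the integer $m'$ read off from $(\sigma\mid_M)\mid_{M_0}$ are compatible (so that $\varphi^m\mid_{M_0}=\varphi^{m'}\mid_{M_0}$ and $(\varphi^{-m}\sigma)\mid_M=\varphi^{-m'}(\sigma\mid_M)$), and that the Lubin-Tate splittings match up, i.e.\ $\varphi'=\varphi^d$ over $L_0$ restricts/relates correctly to $\varphi^{d'}$ over $M_0$ when one passes through $\text{res}_M$. Once Remark~\ref{directproduct-diagram}(i) is invoked to pin these identities down, the rest is the two routine coordinate-wise verifications above, and the theorem follows by combining them.
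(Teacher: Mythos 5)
Your proposal is correct and follows essentially the same route as the paper's own proof: evaluate $\pmb{\phi}_{L/K}^{(\varphi)}(\sigma)$ via Remark~\ref{generalized-arrow-definition-new}, handle the second coordinate by Lemma~\ref{square1-preliminary}, the first coordinate by the compatibility of $e_{L_0/M_0}^{\text{CFT}}$ with the norm subgroups, and reconcile the exponents $m$ and $m'$ via Remark~\ref{directproduct-diagram}(i). The only cosmetic difference is that the paper phrases the first-coordinate step as $\pi_K^m N_{M_0/K}M_0^\times=\pi_K^{m'}N_{M_0/K}M_0^\times$ rather than keeping the single exponent $m$, which amounts to the same bookkeeping you describe.
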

\begin{proof}
By the isomorphism defined by eq.s (\ref{directproduct}) and 
(\ref{directproduct-definition}), for $\sigma\in\text{Gal}(L/K)$, there
exists a unique $0\leq m\in\mathbb Z$ such that $\sigma\mid_{L_0}=\varphi^m$
and $\varphi^{-m}\sigma\in\text{Gal}(L/L_0)$. Now, following the definition,
\begin{equation*}
\pmb{\phi}_{L/K}^{(\varphi)}(\sigma) = \left(\pi_K^m N_{L_0/K}L_0^\times ,
\phi_{L/L_0}^{(\varphi^d)}(\varphi^{-m}\sigma)\right).
\end{equation*}
Thus,
\begin{equation*}
\begin{aligned}
\left(e_{L_0/M_0}^{\text{CFT}},
\widetilde{\mathcal N}_{L/M}^{\text{Coleman}}\right)
\left(\pi_K^m N_{L_0/K}L_0^\times , 
\phi_{L/L_0}^{(\varphi^d)}(\varphi^{-m}\sigma)\right)  =\\ 
\left(e_{L_0/M_0}^{\text{CFT}}(\pi_K^m N_{L_0/K}L_0^\times),
\widetilde{\mathcal N}_{L/M}^{\text{Coleman}}(\phi_{L/L_0}
^{(\varphi^d)}(\varphi^{-m}\sigma))\right)  =\\
\left(\pi_K^m N_{M_0/K}M_0^\times,\phi_{M/M_0}^{(\varphi^{d'})}
(\varphi^{-m}\sigma\mid_{M})\right) 
\end{aligned}
\end{equation*}
by Lemma \ref{square1-preliminary}.
Note that, by the existence theorem of local class field theory,
\begin{equation*}
e_{L_0/M_0}^{\text{CFT}}(\pi_K^m N_{L_0/K}L_0^\times)
=\pi_K^mN_{M_0/K}M_0^\times=\pi_K^{m'}N_{M_0/K}M_0^\times ,
\end{equation*}
where $0\leq m'\in\mathbb Z$ is the unique integer satisfying
$(\sigma\mid_M)\mid_{M_0}=\sigma\mid_{M_0}=\varphi^{m'}$ and 
$\varphi^{-m'}(\sigma\mid_{M})\in\text{Gal}(M/M_0)$.
Hence,
\begin{equation*}
\begin{aligned}
\left(e_{L_0/M_0}^{\text{CFT}},
\widetilde{\mathcal N}_{L/M}^{\text{Coleman}}\right)
(\pmb{\phi}_{L/K}^{(\varphi)}(\sigma)) &= 
\left(\pi_K^{m'}N_{M_0/K}M_0^\times ,\phi_{M/M_0}^{(\varphi^{d'})}
(\varphi^{-m}\sigma\mid_{M})\right)\\
& = \left(\pi_K^{m'}N_{M_0/K}M_0^\times ,\phi_{M/M_0}^{(\varphi^{d'})}
(\varphi^{-m'}(\sigma\mid_{M}))\right)\\
&=\pmb{\phi}_{M/K}^{(\varphi)}(\text{res}_M(\sigma))
\end{aligned}
\end{equation*}
by Remark \ref{directproduct-diagram} part (i), which completes the proof.
\end{proof}
Now, let $F/K$ be a finite sub-extension of $L/K$. Thus, $L/F$ is an infinite
$APF$-Galois extension (cf. Lemma 3.3 of \cite{ikeda-serbest}). Fix a 
Lubin-Tate splitting $\varphi_F$ over $F$. Now, assume that the residue-class
degree $[\kappa_L : \kappa_F]=d'$, for some $d'\mid d$, 
and there exists the chain of field extensions 
\begin{equation*}
F\subset L\subset F_{(\varphi_F)^{d'}}.
\end{equation*}
Thus, there exists the generalized arrow
\begin{equation*}
\pmb{\phi}_{L/F}^{(\varphi_F)}:\text{Gal}(L/F)\rightarrow
F^\times/N_{L_0^{(F)}/F}{L_0^{(F)}}^\times
\times
U_{\widetilde{\mathbb X}{(L/F)}}^\diamond
/U_{\mathbb X(L/F)}
\end{equation*}
corresponding to the extension $L/F$. Here, $L_0^{(F)}$ is defined as
usual by $L_0^{(F)}=L\cap F^{nr}=F^{nr}_{d'}$. Note that, there is the
following diagram of field extensions.
\begin{equation*}
\xymatrix{
{} & L\ar@{-}[d] & {} \\
{} & {L_0^{(F)}}\ar@{-}[dl]_{\text{totally-ramified}}
\ar@{-}[dr]^{[L_0^{(F)}:F]<\infty} & {} \\
{L_0^{(K)}}\ar@{-}[dr]_{[L_0^{(K)}:K]<\infty} & {} & 
F\ar@{-}[dl]^{[F:K]<\infty} \\
{} & K & {}
}
\end{equation*}
Thus, $L/L_0^{(F)}$ and $L/L_0^{(K)}$ are infinite totally-ramified
$APF$-Galois extensions, by Lemma 3.3 of \cite{ikeda-serbest}, and satisfy
$L_0^{(F)}\subset L\subset \left(L_0^{(F)}\right)_{\varphi_F^{d'}}$ and
$L_0^{(K)}\subset L\subset \left(L_0^{(K)}\right)_{\varphi_K^{d}}$.
\begin{remark}
\label{varphi-varphi}
Note that $L_0^{(F)}$ is compatible with $(L_0^{(K)},\varphi_{L_0^{(K)}})$,
in the sense of \cite{koch-deshalit} pp. 89, where 
$\varphi_{L_0^{(K)}}=\varphi_K^d$. Thus, $\varphi_{L_0^{(F)}}=\varphi_F^{d'}
=\varphi_{L_0^{(K)}}^{f(L_0^{(F)}/L_0^{(K)})}=\varphi_K^d$, as 
$L_0^{(F)}/L_0^{(K)}$ is totally-ramified.
\end{remark}
For the extension $L/L_0^{(F)}$, fix an ascending chain
\begin{equation*}
L_0^{(F)}=F_o\subset F_1\subset\cdots\subset F_i\subset\cdots\subset L
\end{equation*}
satisfying
$L=\bigcup_{0\leq i\in\mathbb Z}F_i$ and $[F_{i+1}:F_i]<\infty$ for
every $0\leq i\in\mathbb Z$. Following \cite{ikeda-serbest}, introduce 
the homomorphism
\begin{equation}
\label{Lambda-map}
\Lambda_{F/K} : \widetilde{\mathbb X}(L/L_0^{(F)})^\times\rightarrow
\widetilde{\mathbb X}(L/L_0^{(K)})^\times
\end{equation}
by
\begin{multline}
\label{Lambda-map-definition}
\Lambda_{F/K} :
(\alpha_{F_0}\xleftarrow{\widetilde{N}_{F_1/F_0}}\alpha_{F_1}
\xleftarrow{\widetilde{N}_{F_2/F_1}}\cdots)\mapsto \\
(\widetilde{N}_{L_0^{(F)}/L_0^{(K)}}(\alpha_{F_0})\xleftarrow
{\widetilde{N}_{L_0^{(F)}/L_0^{(K)}}}
\alpha_{F_0}\xleftarrow{\widetilde{N}_{F_1/F}}\alpha_{F_1}
\xleftarrow{\widetilde{N}_{F_2/F_1}}\cdots),
\end{multline}
for each $\left(\alpha_{F_i}\right)_{0\leq i\in\mathbb Z}
\in\widetilde{\mathbb X}(L/L_0^{(F)})^\times$. 
This homomorphism induces a group homomorphism
\begin{equation}
\label{lambda-map}
\lambda_{F/K} : 
U_{\widetilde{\mathbb X}{(L/L_0^{(F)})}}^\diamond/U_{\mathbb X(L/L_0^{(F)})}
\rightarrow
U_{\widetilde{\mathbb X}{(L/L_0^{(K)})}}^\diamond/U_{\mathbb X(L/L_0^{(K)})}
\end{equation}
defined by
\begin{equation}
\label{lambda-map-definition}
\lambda_{F/K}:\overline{U}\mapsto\Lambda_{F/K}(U).U_{\mathbb X(L/L_0^{(K)})},
\end{equation}
for every $U\in U_{\widetilde{\mathbb X}{(L/L_0^{(F)})}}^\diamond$, where
$\overline{U}$ denotes the coset $U.U_{\mathbb X(L/L_0^{(F)})}$ 
in $U_{\widetilde{\mathbb X}{(L/L_0^{(F)})}}^\diamond
/U_{\mathbb X(L/L_0^{(F)})}$ (for details cf. \cite{ikeda-serbest}).
\begin{lemma}
\label{square2-preliminary}
Let $F/K$ be a finite sub-extension of $L/K$. Fix a 
Lubin-Tate splitting $\varphi_F$ over $F$. Assume that the residue-class
degree $[\kappa_L : \kappa_F]=d'$ 
and $F\subset L\subset F_{(\varphi_F)^{d'}}$ for some $d'\mid d$. 
Then the square
\begin{equation}
\label{square-F/K-U-preliminary}
\SelectTips{cm}{}\xymatrix{
{\text{Gal}(L/L_0^{(F)})}\ar[r]^-{\phi_{L/L_0^{(F)}}^{(\varphi_K^{d})}}
\ar[d]_{\text{inc.}} & 
{U_{\widetilde{\mathbb X}{(L/L_0^{(F)})}}^\diamond/U_{\mathbb X(L/L_0^{(F)})}}
\ar[d]^{\lambda_{F/K}} \\
{\text{Gal}(L/L_0^{(K)})}\ar[r]^-{\phi_{L/L_0^{(K)}}^{(\varphi_K^d)}} & 
{U_{\widetilde{\mathbb X}{(L/L_0^{(K)})}}^\diamond/U_{\mathbb X(L/L_0^{(K)})}},
}
\end{equation}
where the right-vertical arrow
\begin{equation*}
\lambda_{F/K} : U_{\widetilde{\mathbb X}{(L/L_0^{(F)})}}^\diamond/
U_{\mathbb X(L/L_0^{(F)})}
\rightarrow U_{\widetilde{\mathbb X}{(L/L_0^{(K)})}}^\diamond/
U_{\mathbb X(L/L_0^{(K)})}
\end{equation*}
is defined by eq.s (\ref{lambda-map}) and
(\ref{lambda-map-definition}), 
is commutative.
\end{lemma}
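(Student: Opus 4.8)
The plan is to recognize the square (\ref{square-F/K-U-preliminary}) as the functoriality square of the original Fesenko arrow with respect to the \emph{finite} extension $L_0^{(F)}/L_0^{(K)}$, and to deduce its commutativity from the corresponding result of \cite{ikeda-serbest}. First I would observe that, since $\kappa_{L_0^{(F)}}=\kappa_L=\kappa_{L_0^{(K)}}$, the extension $L_0^{(F)}/L_0^{(K)}$ is finite and totally ramified; that $L/L_0^{(F)}$ and $L/L_0^{(K)}$ are both infinite totally-ramified $APF$-Galois extensions with $L_0^{(F)}\subset L\subset(L_0^{(F)})_{\varphi_F^{d'}}$ and $L_0^{(K)}\subset L\subset(L_0^{(K)})_{\varphi_K^{d}}$; and that, by Remark \ref{varphi-varphi}, $\varphi_{L_0^{(F)}}=\varphi_F^{d'}=\varphi_K^{d}=\varphi_{L_0^{(K)}}$. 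Hence both horizontal maps in (\ref{square-F/K-U-preliminary}) are Fesenko arrows attached to one and the same Lubin-Tate splitting $\varphi_K^d$, the left vertical arrow is the inclusion of Galois groups, and $\lambda_{F/K}$, as defined in (\ref{lambda-map})--(\ref{lambda-map-definition}), is precisely the transition map of \cite{ikeda-serbest} associated with $L_0^{(F)}/L_0^{(K)}$; the commutativity is then the one already proven there.

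For a self-contained verification I would argue directly, in the spirit of the proof of Lemma \ref{square1-preliminary}. Fix a chain $L_0^{(K)}\subset L_0^{(F)}=F_0\subset F_1\subset\cdots\subset L$ exhausting $L$ with finite steps, and compute $\widetilde{\mathbb X}(L/L_0^{(F)})$ along $(F_i)_{i\geq 0}$, $\widetilde{\mathbb X}(L/L_0^{(K)})$ along $(L_0^{(K)},F_0,F_1,\dots)$. For $\sigma\in\text{Gal}(L/L_0^{(F)})\subseteq\text{Gal}(L/L_0^{(K)})$ let $U_\sigma\in U_{\widetilde{\mathbb X}(L/L_0^{(F)})}^\diamond$ solve $U_\sigma^{1-\varphi_K^d}=\Pi_{\varphi_K^d;L/L_0^{(F)}}^{\sigma-1}$ and let $V_\sigma\in U_{\widetilde{\mathbb X}(L/L_0^{(K)})}^\diamond$ solve $V_\sigma^{1-\varphi_K^d}=\Pi_{\varphi_K^d;L/L_0^{(K)}}^{\sigma-1}$; since in Fesenko's theory these solutions are unique modulo $U_{\mathbb X}$, it suffices to show $\Lambda_{F/K}(U_\sigma)^{1-\varphi_K^d}=\Pi_{\varphi_K^d;L/L_0^{(K)}}^{\sigma-1}$. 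As $\Lambda_{F/K}$ is a homomorphism built from the extended norm maps $\widetilde{N}$, which are equivariant under the Frobenius-type automorphism $\varphi_K^d$, it commutes with $1-\varphi_K^d$, so $\Lambda_{F/K}(U_\sigma)^{1-\varphi_K^d}=\Lambda_{F/K}(\Pi_{\varphi_K^d;L/L_0^{(F)}}^{\sigma-1})$; and the reindexing formula (\ref{Lambda-map-definition}), combined with the identity $\widetilde{N}_{L_0^{(F)}/L_0^{(K)}}(\pi_{L_0^{(F)}})=\pi_{L_0^{(K)}}$ (total ramification of $L_0^{(F)}/L_0^{(K)}$ and coincidence of Lubin-Tate labellings, Remark \ref{varphi-varphi}) together with the fact that $\sigma$ fixes $L_0^{(F)}$, shows $\Lambda_{F/K}(\Pi_{\varphi_K^d;L/L_0^{(F)}}^{\sigma-1})=\Pi_{\varphi_K^d;L/L_0^{(K)}}^{\sigma-1}$. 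Thus $\Lambda_{F/K}(U_\sigma)\equiv V_\sigma\pmod{U_{\mathbb X(L/L_0^{(K)})}}$, which is the commutativity of (\ref{square-F/K-U-preliminary}).

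The hard part will be exactly that last step: checking that $\Lambda_{F/K}$ simultaneously intertwines the two $\varphi_K^d$-actions and carries the canonical prime $\Pi_{\varphi_K^d;L/L_0^{(F)}}$ to $\Pi_{\varphi_K^d;L/L_0^{(K)}}$. Both facts rest on Remark \ref{varphi-varphi}: because $L_0^{(F)}/L_0^{(K)}$ is totally ramified and the two Lubin-Tate splittings coincide, the canonical norm-compatible systems of prime elements over $L_0^{(F)}$ and over $L_0^{(K)}$, as well as $\varphi_K^d$ itself, are compatible along $\widetilde{N}_{L_0^{(F)}/L_0^{(K)}}$ — in parallel with the way Lemmas \ref{unramified-L/L_0-over-T/K} and \ref{lemma-prime-equality} supply the analogous compatibility in the unramified direction. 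Everything else is routine bookkeeping inside the field-of-norms formalism.
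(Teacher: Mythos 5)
Your proposal is correct and follows the paper's own route: the paper's entire proof is the citation ``Look at the proof of Theorem 5.12 of \cite{ikeda-serbest}'', i.e.\ it views the square as the functoriality of the original Fesenko arrow under the finite, totally-ramified base change $L_0^{(F)}/L_0^{(K)}$ with the common Lubin-Tate splitting $\varphi_F^{d'}=\varphi_K^{d}$ of Remark \ref{varphi-varphi}, exactly as in your first paragraph. Your second paragraph simply reproduces the computation of that cited proof (that $\Lambda_{F/K}$ commutes with $1-\varphi_K^{d}$ and carries $\Pi_{\varphi_K^{d};L/L_0^{(F)}}^{\sigma-1}$ to $\Pi_{\varphi_K^{d};L/L_0^{(K)}}^{\sigma-1}$ via the norm-compatibility of the labellings), so it is a correct expansion rather than a different argument.
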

\begin{proof}
Look at the proof of Theorem 5.12 of \cite{ikeda-serbest}.
\end{proof}
So, we have the following theorem.
\begin{theorem}
\label{square2}
Let $F/K$ be a finite sub-extension of $L/K$. Fix a 
Lubin-Tate splitting $\varphi_F$ over $F$. Assume that the residue-class
degree $[\kappa_L : \kappa_F]=d'$ 
and $F\subset L\subset F_{(\varphi_F)^{d'}}$ for some $d'\mid d$. 
Then the square
\begin{equation}
\label{square-F/K-U}
\SelectTips{cm}{}\xymatrix{
{\text{Gal}(L/F)}\ar[r]^-{\pmb{\phi}_{L/F}^{(\varphi_F)}}
\ar[d]_{\text{inc.}} & 
{F^\times/N_{L_0^{(F)}/F}{L_0^{(F)}}^\times
\times
U_{\widetilde{\mathbb X}{(L/F)}}^\diamond
/U_{\mathbb X(L/F)}}\ar[d]^{(N_{F/K},\lambda_{F/K})} \\
{\text{Gal}(L/K)}\ar[r]^-{\pmb{\phi}_{L/K}^{(\varphi_K)}} & 
{K^\times/N_{L_0^{(K)}/K}{L_0^{(K)}}^\times
\times
U_{\widetilde{\mathbb X}{(L/K)}}^\diamond
/U_{\mathbb X(L/K)}},
}
\end{equation}
where the right-vertical arrow
\begin{multline*}
(N_{F/K},\lambda_{F/K}) :
F^\times/N_{L_0^{(F)}/F}{L_0^{(F)}}^\times
\times
U_{\widetilde{\mathbb X}(L/F)}^\diamond
/U_{\mathbb X(L/F)}
\rightarrow \\
K^\times/N_{L_0^{(K)}/K}{L_0^{(K)}}^\times
\times
U_{\widetilde{\mathbb X}{(L/K)}}^\diamond
/U_{\mathbb X(L/K)}
\end{multline*}
defined by
\begin{equation*}
(N_{F/K},\lambda_{F/K}) :
(\overline{a},\overline{U})\mapsto\left(\overline{N_{F/K}(a)},
\lambda_{F/K}(\overline{U})\right),
\end{equation*}
for every $(\overline{a},\overline{U})\in 
{F^\times/N_{L_0^{(F)}/F}{L_0^{(F)}}^\times}\times
U_{\widetilde{\mathbb X}(L/F)}^\diamond/U_{\mathbb X(L/F)}$, 
is commutative.
\end{theorem}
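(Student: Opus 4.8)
The plan is to chase an element $\sigma\in\text{Gal}(L/F)$ around the square, using in both directions the reformulated description of the generalized arrow from Remark \ref{generalized-arrow-definition-new}, and reducing the two coordinates to, respectively, the functoriality of the Artin map of abelian local class field theory and to Lemma \ref{square2-preliminary}. First fix $\sigma\in\text{Gal}(L/F)$ and let $n,m\in\mathbb Z_{\ge 0}$ be the integers determined by $\sigma\mid_{L_0^{(F)}}=\varphi_F^{\,n}$ and $\sigma\mid_{L_0^{(K)}}=\varphi_K^{\,m}$. By Remark \ref{varphi-varphi} one has $\varphi_F^{\,d'}=\varphi_K^{\,d}$ on $L$, so that the second coordinate of $\pmb{\phi}_{L/F}^{(\varphi_F)}(\sigma)$ is $\phi_{L/L_0^{(F)}}^{(\varphi_K^{\,d})}(\varphi_F^{-n}\sigma)$; and by Remark \ref{directproduct-diagram}(ii) one has $\varphi_F^{-n}\sigma=\varphi_K^{-m}\sigma$ inside $\text{Gal}(L/L_0^{(K)})$. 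Note that $L_0^{(K)}\subseteq L_0^{(F)}$ (indeed $L_0^{(F)}=L_0^{(K)}F$, since $F\subseteq L$ and a residue-degree count gives $[L_0^{(F)}:F]=d'=[L_0^{(K)}F:F]$). Applying $(N_{F/K},\lambda_{F/K})$ to $\pmb{\phi}_{L/F}^{(\varphi_F)}(\sigma)$ and invoking Lemma \ref{square2-preliminary} for the second coordinate yields
\[
\lambda_{F/K}\!\left(\phi_{L/L_0^{(F)}}^{(\varphi_K^{\,d})}(\varphi_F^{-n}\sigma)\right)=\phi_{L/L_0^{(K)}}^{(\varphi_K^{\,d})}(\varphi_F^{-n}\sigma)=\phi_{L/L_0^{(K)}}^{(\varphi_K^{\,d})}(\varphi_K^{-m}\sigma),
\]
which is exactly the second coordinate of $\pmb{\phi}_{L/K}^{(\varphi_K)}(\sigma)$ by Remark \ref{generalized-arrow-definition-new} applied to $L/K$.

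It then remains to match the first coordinates, i.e.\ to show
\[
N_{F/K}(\pi_F)^{\,n}\equiv\pi_K^{\,m}\pmod{N_{L_0^{(K)}/K}(L_0^{(K)})^\times}.
\]
For this one passes through the reciprocity maps of abelian local class field theory: since $L_0^{(F)}/F$ and $L_0^{(K)}/K$ are finite and unramified, every prime element of $F$ is sent by $\text{rec}_{L_0^{(F)}/F}$ to the arithmetic Frobenius $\varphi_F\mid_{L_0^{(F)}}$, so $\pi_F^{\,n}\mapsto\varphi_F^{\,n}\mid_{L_0^{(F)}}=\sigma\mid_{L_0^{(F)}}$; restricting to $L_0^{(K)}$ and using the standard functoriality $\text{rec}_{L_0^{(K)}/K}(N_{F/K}(a))=\text{rec}_{L_0^{(F)}/F}(a)\mid_{L_0^{(K)}}$ for $a\in F^\times$ (valid as $L_0^{(F)}=L_0^{(K)}F$), one obtains $\text{rec}_{L_0^{(K)}/K}(N_{F/K}(\pi_F^{\,n}))=\sigma\mid_{L_0^{(K)}}=\varphi_K^{\,m}\mid_{L_0^{(K)}}=\text{rec}_{L_0^{(K)}/K}(\pi_K^{\,m})$, and injectivity of $\text{rec}_{L_0^{(K)}/K}$ on $K^\times/N_{L_0^{(K)}/K}(L_0^{(K)})^\times$ gives the congruence. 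Combining the two coordinates, $(N_{F/K},\lambda_{F/K})\bigl(\pmb{\phi}_{L/F}^{(\varphi_F)}(\sigma)\bigr)=\pmb{\phi}_{L/K}^{(\varphi_K)}(\sigma)$, which is the asserted commutativity of the square.

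The real content here is Lemma \ref{square2-preliminary} (equivalently Theorem 5.12 of \cite{ikeda-serbest}), which already encodes the delicate norm-behaviour of the Fesenko arrow on the diamond unit quotients; granting that, the present argument is essentially bookkeeping. The only place requiring genuine care is the reconciliation of the two exponents $n$ and $m$ and of the two Lubin-Tate splittings $\varphi_F^{\,d'}$ and $\varphi_K^{\,d}$ — which is precisely what Remarks \ref{varphi-varphi} and \ref{directproduct-diagram}(ii) are tailored to provide — together with the observation that the canonical prime elements $\pi_F$ and $\pi_K$ need only be compared modulo norms, so that no norm-compatibility of the Lubin-Tate labellings themselves is needed.
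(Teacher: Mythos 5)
Your proof is correct and follows essentially the same route as the paper's: the same element chase through the reformulation of Remark \ref{generalized-arrow-definition-new}, with Lemma \ref{square2-preliminary} together with Remarks \ref{varphi-varphi} and \ref{directproduct-diagram}(ii) handling the second coordinate and the reconciliation of the two exponents. The only cosmetic difference is in the first coordinate, where the paper invokes the norm-compatibility of the primes in the fixed Lubin-Tate labelling together with abelian class field theory, while you obtain the congruence $N_{F/K}(\pi_F)^{\,n}\equiv\pi_K^{\,m}\pmod{N_{L_0^{(K)}/K}(L_0^{(K)})^{\times}}$ directly from the prime-to-Frobenius description of the reciprocity map for the unramified extensions $L_0^{(F)}/F$ and $L_0^{(K)}/K$ and its norm functoriality, which is an equivalent piece of abelian local class field theory.
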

\begin{proof}
Let $\sigma\in\text{Gal}(L/F)$. There exists $0\leq m\in\mathbb Z$ such that
$\sigma\mid_{L_0^{(F)}}=\varphi_F^m$ and $\varphi_F^{-m}\sigma\in
\text{Gal}(L/L_0^{(F)})$. Now,
\begin{equation*}
\pmb{\phi}_{L/F}^{(\varphi_F)}(\sigma)=
\left(\pi_F^m.N_{L_0^{(F)}/F}{L_0^{(F)}}^\times,\phi_{L/L_0^{(F)}}
^{(\varphi_K^d)}(\varphi_F^{-m}\sigma)\right)
\end{equation*}
and
\begin{equation*}
(N_{F/K},\lambda_{F/K})(\pmb{\phi}_{L/F}^{(\varphi_F)}(\sigma))=
\left(\pi_K^m.N_{L_0^{(K)}/K}{L_0^{(K)}}^\times,\phi_{L/L_0^{(K)}}
^{(\varphi_K^d)}(\varphi_F^{-m}\sigma)\right)
\end{equation*}
by the norm-compatibility of primes in the fixed Lubin-Tate labelling and by
Lemma \ref{square2-preliminary}. Now, there exists $0\leq m'\in\mathbb Z$ 
such that $\sigma\mid_{L_0^{(K)}}=\varphi_K^{m'}$ and 
$\varphi_K^{-m'}\sigma\in\text{Gal}(L/L_0^{(K)})$. 
By Remark \ref{directproduct-diagram} part (ii), it follows that
$\varphi_F^m\mid_{L_0^{(K)}}=\varphi_K^{m'}$ and $\varphi_F^{-m}\sigma=
\varphi_K^{-m'}\sigma$. By abelian local class field theory,
$N_{F/K} : \pi_F^m N_{L_0^{(F)}/F}{L_0^{(F)}}^\times\mapsto
\pi_K^{m'}N_{L_0^{(K)}/K}{L_0^{(K)}}^\times
=\pi_K^m.N_{L_0^{(K)}/K}{L_0^{(K)}}^\times $. Thus,
\begin{equation*}
\begin{aligned}
(N_{F/K},\lambda_{F/K})(\pmb{\phi}_{L/F}^{(\varphi_F)}(\sigma)) &=
\left(\pi_K^m.N_{L_0^{(K)}/K}{L_0^{(K)}}^\times,\phi_{L/L_0^{(K)}}
^{(\varphi_K^d)}(\varphi_F^{-m}\sigma)\right) \\
& =\left(\pi_K^{m'}.N_{L_0^{(K)}/K}{L_0^{(K)}}^\times,\phi_{L/L_0^{(K)}}
^{(\varphi_K^d)}(\varphi_K^{-m'}\sigma)\right) \\
&=\pmb{\phi}_{L/K}^{(\varphi_K)}(\sigma),
\end{aligned}
\end{equation*} 
which completes the proof.
\end{proof}
Let $L/K$ be any $APF$-Galois sub-extension of $K_{\varphi^d}/K$, where
the residue-class degree is $d$. In case $L/K$ is assumed
to be a finite extension, the $\widetilde{K}$-coordinate of the generalized
arrow $\pmb{\phi}_{L/K}^{(\varphi)} : \text{Gal}(L/K)\rightarrow
K^\times/N_{L_0/K}L_0^\times\times U_{\widetilde{\mathbb X}(L/K)}^\diamond
/U_{\mathbb X(L/K)}$ is the Iwasawa-Neukirch map $\iota_{L/K}$ of $L/K$
(for details on Iwasawa-Neukirch map $\iota_{L/K}$ of the Galois extension
$L/K$, cf. Section 1 of \cite{ikeda-serbest}). 
More precisely, we have the following proposition.
\begin{proposition}
Define a homomorphism
\begin{equation}
\rho : K^\times/N_{L_0/K}L_0^\times\times U_{\widetilde{\mathbb X}(L/K)}
^\diamond/U_{\mathbb X(L/K)}\rightarrow K^\times/N_{L/K}L^\times
\end{equation}
by
\begin{equation}
\rho : (\pi_K^m,(u_{\widetilde{E}}))\mapsto
\pi_K^m N_{L_0/K}(u_{\widetilde{L}_0})\mod{N_{L/K}L^\times},
\end{equation}
for every $(\pi_K^m,(u_{\widetilde{E}}))\in  K^\times/N_{L_0/K}L_0^\times
\times U_{\widetilde{\mathbb X}(L/K)}^\diamond/U_{\mathbb X(L/K)}$. Then
the composite map
\begin{equation}
\SelectTips{cm}{}\xymatrix{
{\text{Gal}(L/K)}\ar[r]^-{{\pmb\phi}_{L/K}^{(\varphi)}}
\ar@/^3pc/[rr]^{\rho\circ {\pmb\phi}_{L/K}^{(\varphi)}=\iota_{L/K}} 
& {K^\times/N_{L_0/K}L_0^\times\times 
U_{\widetilde{\mathbb X}{(L/K)}}^\diamond/U_{\mathbb X(L/K)}}
\ar[r]^-{\rho} & 
K^\times/N_{L/K}L^\times 
}
\end{equation}
is the Iwasawa-Neukirch map $\iota_{L/K} : \text{Gal}(L/K)\rightarrow 
K^\times/N_{L/K}L^\times$ of $L/K$.
\end{proposition}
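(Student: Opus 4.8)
The plan is to reduce everything, via the direct–product decomposition $(\ref{directproduct})$–$(\ref{directproduct-definition})$, to the already established finite totally–ramified case of Fesenko's construction (the case $d=1$ of this very Proposition, cf. Sections 1 and 5 of \cite{ikeda-serbest}) together with two standard functorialities of the Iwasawa–Neukirch map. I would fix $\sigma\in\text{Gal}(L/K)$, write $\sigma\mid_{L_0}=\varphi^m$ with $0\le m\in\mathbb Z$, and set $\tau=\varphi^{-m}\sigma\in\text{Gal}(L/L_0)$, so that $\sigma=(\varphi^m\mid_L)\,\tau$ as elements of $\text{Gal}(L/K)$. The integer $m$ is only well defined modulo $d$, but this is harmless: $\pi_K^m\bmod N_{L_0/K}L_0^\times$, the automorphism $\varphi^m\mid_L$, and the element $\tau$ all depend on $m$ only modulo $d$, because $\varphi^d$ fixes $L$ and $[L_0:K]=d$.

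First I would simply unwind the left-hand side. By Remark \ref{generalized-arrow-definition-new} and the definition of $\rho$, and using the fact recorded after $(\ref{main-equation-alternative})$ that $\text{Pr}_{\widetilde{L}_0}\,\phi_{L/L_0}^{(\varphi')}(\tau)\in U_{L_0}$,
\begin{equation*}
\rho\bigl(\pmb{\phi}_{L/K}^{(\varphi)}(\sigma)\bigr)\equiv
\pi_K^m\cdot N_{L_0/K}\bigl(\text{Pr}_{\widetilde{L}_0}\,\phi_{L/L_0}^{(\varphi')}(\tau)\bigr)\pmod{N_{L/K}L^\times}.
\end{equation*}
So the task splits into identifying the "ramified'' factor $N_{L_0/K}\bigl(\text{Pr}_{\widetilde{L}_0}\,\phi_{L/L_0}^{(\varphi')}(\tau)\bigr)$ and the "unramified'' factor $\pi_K^m$, each modulo $N_{L/K}L^\times$.

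For the ramified factor, I would invoke the base case. Since $L/L_0$ is a finite totally–ramified $APF$-Galois extension with $L_0\subset L\subset (L_0)_{\varphi'}$ (Remark \ref{key-remark}), the $d=1$ instance of the Proposition gives $\text{Pr}_{\widetilde{L}_0}\,\phi_{L/L_0}^{(\varphi')}(\tau)\equiv\iota_{L/L_0}(\tau)\pmod{N_{L/L_0}L^\times}$. Applying $N_{L_0/K}$, using transitivity of the norm in the form $N_{L_0/K}\bigl(N_{L/L_0}L^\times\bigr)=N_{L/K}L^\times$, and then the functoriality of the Iwasawa–Neukirch map under the inclusion $\text{Gal}(L/L_0)\hookrightarrow\text{Gal}(L/K)$, namely $\iota_{L/K}\mid_{\text{Gal}(L/L_0)}=N_{L_0/K}\circ\iota_{L/L_0}$ (cf. Section 1 of \cite{ikeda-serbest}), I get $N_{L_0/K}\bigl(\text{Pr}_{\widetilde{L}_0}\,\phi_{L/L_0}^{(\varphi')}(\tau)\bigr)\equiv\iota_{L/K}(\tau)\pmod{N_{L/K}L^\times}$.

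For the unramified factor I must check $\pi_K^m\equiv\iota_{L/K}(\varphi^m\mid_L)\pmod{N_{L/K}L^\times}$; since $\iota_{L/K}$ is a homomorphism, it suffices to verify the case $m=1$, i.e. $\iota_{L/K}(\varphi\mid_L)\equiv\pi_K\pmod{N_{L/K}L^\times}$. Here I would use the norm–limitation theorem $N_{L/K}L^\times=N_{L^{ab}/K}(L^{ab})^\times$, where $L^{ab}$ is the maximal subextension of $L$ abelian over $K$, together with the compatibility of the Iwasawa–Neukirch map with passage from $L$ to $L^{ab}$; this reduces the claim to the defining normalization of the Lubin–Tate splitting, $\text{rec}_K(\pi_K)=\varphi\mid_{K^{ab}}$. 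Combining the three congruences with $\sigma=(\varphi^m\mid_L)\tau$ and the homomorphism property of $\iota_{L/K}$ yields
\begin{equation*}
\rho\bigl(\pmb{\phi}_{L/K}^{(\varphi)}(\sigma)\bigr)\equiv\iota_{L/K}(\varphi^m\mid_L)\,\iota_{L/K}(\tau)=\iota_{L/K}(\sigma)\pmod{N_{L/K}L^\times},
\end{equation*}
which is the assertion. The genuine input is the $d=1$ base case — the identification of the $\widetilde{L}_0$-coordinate of Fesenko's map with Iwasawa–Neukirch for finite totally–ramified extensions — which I would cite from \cite{ikeda-serbest}; the remaining work is the (elementary, but slightly delicate) bookkeeping with the nested norm groups $N_{L/L_0}L^\times\subset L_0^\times$ and $N_{L/K}L^\times\subset K^\times$ and the ambiguity of $m$ modulo $d$, everything else being formal.
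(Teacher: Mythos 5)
Your reduction scheme (split $\sigma=(\varphi^m\mid_L)\tau$ with $\tau\in\text{Gal}(L/L_0)$, treat the unramified factor by abelian class field theory, treat the ramified factor by the totally ramified case over $L_0$, then reassemble with norm transitivity and the base-change compatibility $\iota_{L/K}\mid_{\text{Gal}(L/L_0)}\equiv N_{L_0/K}\circ\iota_{L/L_0}$) is internally coherent, and the auxiliary facts you invoke about the Iwasawa--Neukirch map (well-definedness, homomorphism property, the identity $\iota_{L/K}(\varphi\mid_L)\equiv\pi_K \bmod N_{L/K}L^\times$ via norm limitation and $\text{rec}_K(\pi_K)=\varphi\mid_{K^{ab}}$) are standard. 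The genuine gap is the step you call the ``base case'': the congruence $\text{Pr}_{\widetilde{L}_0}\bigl(\phi_{L/L_0}^{(\varphi')}(\tau)\bigr)\equiv\iota_{L/L_0}(\tau)\pmod{N_{L/L_0}L^\times}$ is not a quotable result of \cite{ikeda-serbest}. That paper constructs $\phi_{M/K}^{(\varphi)}$ and $\Phi_{M/K}^{(\varphi)}$ for (infinite) totally ramified $APF$-extensions and compares them with the Hazewinkel-type map $H$, but it nowhere identifies the $\widetilde K$-coordinate of the solution of $U^{1-\varphi}=\Pi^{\sigma-1}$ with the Neukirch norm class $N_{\Sigma_{\sigma^*}/K}(\pi_{\Sigma_{\sigma^*}})$; indeed that identification is exactly the $d=1$ instance of the Proposition you are asked to prove. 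As written, your argument therefore outsources the entire substantive content to a citation that does not exist, i.e.\ it is circular at its core, while everything you actually verify is the formal bookkeeping around it.

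What the paper does at precisely this point is the missing computation: it chooses the Frobenius lift $\sigma^*=\varphi^m\mid_{L^{nr}}\tau^*$, takes the fixed field $\Sigma=\Sigma_{\sigma^*}$, writes a prime of $\Sigma$ as $\pi_\Sigma=\pi_T v$ with $v\in U_{\widetilde L}$, uses the $(1-\varphi^m)$-divisibility of $U_{\widetilde L}$ to produce $z$ with $z^{1-\varphi}=\pi_T^{\sigma-1}$ and $N_{\Sigma/K}(\pi_\Sigma)=\pi_K^m\widetilde N_{L/K}(z)$, and then compares $z$ with the defining solution $y^{1-\varphi^d}=\pi_T^{\sigma-1}$ of $\phi_{L/L_0}^{(\varphi^d)}(\varphi^{-m}\sigma)$ via $z=y^{1+\varphi+\cdots+\varphi^{d-1}}$; the case $m=0$ is then handled by replacing $\sigma$ with $\varphi^d\sigma$ and using the cocycle relation of Theorem \ref{cocycle1}. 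To repair your proof you must either carry out this computation for $L/L_0$ (at which point your reduction buys little, since the paper's argument already handles all $m$ uniformly and uses only the definition of $\iota$, not the full Neukirch machinery you invoke), or give an independent derivation of the base case, e.g.\ from $H_{L/L_0}^{(\varphi^d)}\circ\Phi_{L/L_0}^{(\varphi^d)}=\text{id}$ together with the classical fact that the Hazewinkel and Iwasawa--Neukirch maps are mutually inverse --- but that derivation is not sketched in your proposal either.
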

\begin{proof}
Let us briefly recall, following Section 1 of \cite{ikeda-serbest}, 
the construction of the Iwasawa-Neukirch map
\begin{equation*}
\iota_{L/K}:\text{Gal}(L/K)\rightarrow K^\times/N_{L/K}L^\times
\end{equation*} 
for the Galois extension $L/K$. For each $\sigma\in\text{Gal}(L/K)$, choose 
$\sigma^*\in\text{Gal}(L^{nr}/K)$ in such a way that:
\begin{itemize}
\item[(i)]
$\sigma^*\mid_L=\sigma$;
\item[(ii)]
$\sigma^*\mid_{K^{nr}}=\varphi^n$, for some $0<n\in\mathbb Z$. 
\end{itemize}
Let $\Sigma_{\sigma^*}$ be the fixed-field $(L^{nr})^{\sigma^*}$ of
$\sigma^*\in\text{Gal}(L^{nr}/K)$ in $L^{nr}$. It is well-known that
$[\Sigma_{\sigma^*}:K]<\infty$. Now, the map 
$\iota_{L/K}:\text{Gal}(L/K)\rightarrow K^\times/N_{L/K}L^\times$ is defined
by $\iota_{L/K}(\sigma)=N_{\Sigma_{\sigma^*}/K}(\pi_{\Sigma_{\sigma^*}})
\mod{N_{L/K}L^\times}$, for $\sigma\in\text{Gal}(L/K)$, where 
$\pi_{\Sigma_{\sigma^*}}$ denotes any prime element of $\Sigma_{\sigma^*}$. 
Thus, for a finite $APF$-Galois extension $L/K$ satisfying
$[\kappa_L :\kappa_K]=d$ and $K\subset L\subset K_{\varphi^d}$, it suffices 
to prove that, for $\sigma\in\text{Gal}(L/K)$,
\begin{equation*}
\rho\circ\pmb{\phi}_{L/K}^{(\varphi)}(\sigma)=\iota_{L/K}(\sigma)=
N_{\Sigma_{\sigma^*}/K}(\pi_{\Sigma_{\sigma^*}})\mod{N_{L/K}L^\times},
\end{equation*}
where $\pi_{\Sigma_{\sigma^*}}$ denotes any prime element of 
$\Sigma_{\sigma^*}$.
Now, for $\sigma\in\text{Gal}(L/K)$, there exists $0\leq m\in\mathbb Z$ such
that $\sigma\mid_{L_0}=\varphi^m$ and $\tau=\varphi^{-m}\sigma\in
\text{Gal}(L/L_0)$. Thus, $\sigma=\varphi^m\tau$.
\begin{itemize}
\item[\textit{Case 1:}] $m>0$.
In this case, it suffices to prove that
\begin{equation*}
\pi_K^m N_{L_0/K}\left(\text{Pr}_{\widetilde{L}_0}
(\phi_{L/L_0}^{(\varphi^d)}(\varphi^{-m}\sigma))\right)=
N_{\Sigma_{\sigma^*}/K}(\pi_{\Sigma_{\sigma^*}})\mod{N_{L/K}L^\times},
\end{equation*}
where $\pi_{\Sigma_{\sigma^*}}$ denotes any prime element of 
$\Sigma_{\sigma^*}$.
To prove this equality, choose $\sigma^*\in\text{Gal}(L^{nr}/K)$ such that
\begin{itemize}
\item[(i)]
$\sigma^*\mid_L=\sigma$;
\item[(ii)]
$\sigma^*\mid_{K^{nr}}=\varphi^m$.
\end{itemize} 
In fact, let $\sigma^*=\varphi^m\mid_{L^{nr}}\tau^*$, where 
$\tau^*\in\text{Gal}(L^{nr}/L_0)$ is defined uniquely by the conditions
$\tau^*\mid_{L}=\tau$ and $\tau^*\mid_{K^{nr}}=\text{id}_{K^{nr}}$, as
$L^{nr}=LK^{nr}$. 
Note that, for $\Sigma=\Sigma_{\sigma^*}$, $\Sigma_0=\Sigma\cap K^{nr}$ 
is a finite extension of degree $[\Sigma_0:K]=m$ over $K$, as $\Sigma_0=
(K^{nr})^{\varphi^m}$, the fixed field of $\varphi^m\in\text{Gal}(K^{nr}/K)$ 
in $K^{nr}$.
As $L$ is fixed by $\varphi^d$, $T=L\cap K_\varphi$ is an unramified 
extension of degree $d$. Thus, the prime element $\pi_T$ is a prime element
of $L$ and of $L^{nr}$. Now, choose a prime element $\pi_\Sigma$ of $\Sigma$.
It is well-known that $\Sigma^{nr}=L^{nr}$ (cf. Section 2 in Chapter 4 of
\cite{fesenko-vostokov}). Thus, $\pi_\Sigma$ is a prime
element of $L^{nr}$. So, there exists a unit $v\in L^{nr}\subset\widetilde{L}$,
such that $\pi_\Sigma=\pi_T v$. Note that, $\pi_{\Sigma}^{\sigma^*-1}=1$
as $\Sigma$ is fixed by $\sigma^*$. Thus, $(\pi_T v)^{\sigma^*-1}=1$ and
we get the equalities
\begin{equation*}
\pi_T^{\sigma-1}=v^{1-\sigma^*}=v^{1-\varphi^m\tau^*}
=v^{1-\tau^*}v^{(1-\varphi^m)\tau^*}.
\end{equation*}
Recall that (by Proposition 1.8 of Chapter IV of \cite{fesenko-vostokov} or
by 1.1 of \cite{koch-deshalit}), $U_{\widetilde{L}}$ is multiplicatively 
$(1-\varphi^m)$-divisible. So, there exists $w\in U_{\widetilde{L}}$ such
that $w^{1-\varphi^m}=v$. Hence,
\begin{equation*}
\pi_T^{\sigma-1}=(w^{1-\tau^*}v^{\tau^*})^{1-\varphi^m},
\end{equation*}
as $\varphi^m\tau^*=\tau^*\varphi^m$. Now, choose $z\in U_{\widetilde{L}}$ as
$z=(w^{1-\tau^*}v^{\tau^*})^{1+\varphi+\cdots +\varphi^{m-1}}$. Note that,
this $z\in U_{\widetilde{L}}$ satisfies $z^{1-\varphi}=\pi_T^{\sigma-1}$. 
Clearly,
\begin{equation*}
\widetilde{N}_{L/K}(z)=\widetilde{N}_{L/K}(v)^{1+\varphi+\cdots+\varphi^{m-1}}.
\end{equation*}
After this preliminary observations,
\begin{equation*}
\begin{aligned}
N_{\Sigma/K}(\pi_\Sigma) &= N_{\Sigma_0/K}\circ 
N_{\Sigma/\Sigma_0}(\pi_\Sigma)\\
&=\widetilde{N}_{L/K}(\pi_\Sigma)^{1+\varphi+\cdots+\varphi^{m-1}}\\
&=\widetilde{N}_{L/K}(\pi_T v)^{1+\sigma+\cdots+\sigma^{m-1}}\\
&=\pi_K^m\widetilde{N}_{L/K}(v)^{1+\varphi+\cdots+\varphi^{m-1}}\\
&=\pi_K^m\widetilde{N}_{L/K}(z),
\end{aligned}
\end{equation*}
as $\pi_T$ belongs to the fixed Lubin-Tate labelling. Thus, the image
of $\sigma$ under the Iwasawa-Neukirch map $\iota_{L/K}$ is 
\begin{equation*}
\iota_{L/K}(\sigma)=\pi_K^m\widetilde{N}_{L/K}(z)\mod{N_{L/K}L^\times}.
\end{equation*}
Now, let $y\in U_L$ such that
\begin{equation*}
y^{1-\varphi^d}=\pi_T^{\varphi^{-m}\sigma-1}=\pi_T^{\sigma-1}.
\end{equation*}
Note that, $T=L\cap K_\varphi$. Then, setting $z=y^{1+\varphi+\cdots 
+\varphi^{d-1}}\in U_L$,
\begin{equation*}
z^{1-\varphi}=y^{1-\varphi^d}=\pi_T^{\sigma-1}.
\end{equation*}
Thus,
\begin{equation*}
N_{L/K}(y)=\widetilde{N}_{L/K}(y)^{1+\varphi+\cdots +\varphi^{d-1}}=
\widetilde{N}_{L/K}(z),
\end{equation*}
which shows that
\begin{equation*}
\begin{aligned}
\iota_{L/K}(\sigma)&=\pi_K^m\widetilde{N}_{L/K}(z)\mod{N_{L/K}L^\times}\\
&=\pi_K^m N_{L/K}(y)\mod{N_{L/K}L^\times}\\
&=\pi_K^m N_{L_0/K}\left(\widetilde{N}_{L/K}(y)\right)\mod{N_{L/K}L^\times}\\
&=\pi_K^m N_{L_0/K}\left(\text{Pr}_{\widetilde{L}_0}
(\phi_{L/L_0}^{(\varphi^d)}(\varphi^{-m}\sigma))\right)\\
&=\rho\circ\pmb{\phi}_{L/K}^{(\varphi)}(\sigma),
\end{aligned}
\end{equation*}
completing the proof.
\item[\textit{Case 2:}] $m=0$. In this case, $\sigma\in\text{Gal}(L/L_0)$.
Consider $\varphi^d\sigma\in\text{Gal}(L^{nr}/K)$. Then by the previous Case 1,
\begin{equation*}
\iota_{L/K}(\varphi^d\sigma)=\rho\circ\pmb{\phi}_{L/K}^{(\varphi)}
(\varphi^d\sigma),
\end{equation*}
where $\iota_{L/K}(\varphi^d\sigma)=\iota_{L/K}(\sigma)$. Now, by 
Theorem \ref{cocycle1}, 
\begin{equation*}
\pmb{\phi}_{L/K}^{(\varphi)}(\varphi^d\sigma)=
\pmb{\phi}_{L/K}^{(\varphi)}(\varphi^d)\pmb{\phi}_{L/K}^{(\varphi)}(\sigma)
^{\varphi^d}=\left(\pi_K^d.N_{L_0/K}L_0^\times,\phi_{L/L_0}^{(\varphi^d)}
(\sigma)\right),
\end{equation*}
where the last equality follows from the fact that 
$K\subset L\subset K_{\varphi^d}$. Thus,
\begin{equation*}
\rho\circ\pmb{\phi}_{L/K}^{(\varphi)}(\sigma)=
\rho\circ\pmb{\phi}_{L/K}^{(\varphi)}(\varphi^d\sigma)=\pi_K^dN_{L_0/K}
\left(\text{Pr}_{\widetilde{L}_0}(\phi_{L/L_0}^{(\varphi^d)}(\sigma))\right)
\mod{N_{L/K}L^\times},
\end{equation*}
which proves that
\begin{equation*}
\iota_{L/K}(\sigma)=\rho\circ\pmb{\phi}_{L/K}^{(\varphi)}(\sigma),
\end{equation*}
completing the proof.
\end{itemize}
\end{proof}
Now, we shall generalize the definition of the extended Hazewinkel map
$H_{L/K}^{(\varphi)}: U_{\widetilde{\mathbb X}{(L/K)}}^\diamond/Y_{L/K}
\rightarrow\text{Gal}(L/K)$ of Fesenko (cf. \cite{fesenko2000, fesenko2001,
fesenko2005} and \cite{ikeda-serbest}) initially defined for totally-ramified
$APF$-Galois sub-extensions $L/K$ of $K_\varphi/K$ to infinite $APF$-Galois
sub-extensions $L/K$ of $K_{\varphi^d}/K$, where $[\kappa_L:\kappa_K]=d$.

In order to do so, we first have to assume that the local field $K$
satisfies the condition
\begin{equation}
\label{rootofunity}
\pmb{\mu}_p(K^{sep})=\{\alpha\in K^{sep}:\alpha^p=1\}\subset K,
\end{equation}
where $p=\text{char}(\kappa_K)$. 
For details on the assumption (\ref{rootofunity}) on $K$, we refer the reader
to \cite{fesenko2000, fesenko2001, fesenko2005}.   

Let $L/K$ be an infinite $APF$-Galois extension with residue-class degree
$[\kappa_L:\kappa_K]=d$ and $K\subset L\subset K_{\varphi^d}$. As usual, let
$L_0=L\cap K^{nr}$. Define the generalized arrow
\begin{equation}
\label{generalized-hazewinkel-map}
\pmb{H}_{L/K}^{(\varphi)}:
K^\times/N_{L_0/K}L_0^\times\times U_{\widetilde{\mathbb X}(L/K)}^\diamond/
Y_{L/L_0}\rightarrow\text{Gal}(L/K)
\end{equation}
for the extension $L/K$ by
\begin{equation}
\label{generalized-hazewinkel-definition}
\pmb{H}_{L/K}^{(\varphi)}\left((\pi_K^m N_{L_0/K}L_0^\times, 
U.Y_{L/L_0})\right)=\varphi^m\mid_L H_{L/L_0}^{(\varphi^d)}(U.Y_{L/L_0}),
\end{equation}
for every $m\in\mathbb Z$ and $U\in U_{\widetilde{\mathbb X}(L/K)}^\diamond$,
where $H_{L/L_0}^{(\varphi^d)}:U_{\widetilde{\mathbb X}(L/K)}^\diamond/
Y_{L/L_0}\rightarrow\text{Gal}(L/L_0)$ is the extended Hazewinkel map of 
Fesenko for the extension $L/L_0$. For the definition and basic properties
of the group $Y_{L/L_0}$, we refer the reader to \cite{fesenko2005} and 
\cite{ikeda-serbest}.

The following lemma is clear.
\begin{lemma}
\label{bijection}
Suppose that the local field $K$ satisfies the condition given in eq.
(\ref{rootofunity}). Let $L/K$ be an infinite $APF$-Galois sub-extension
of $K_{\varphi^d}/K$, where $d=[\kappa_L:\kappa_K]$. Then the generalized 
arrow
\begin{equation}
\pmb{H}_{L/K}^{(\varphi)}:
K^\times/N_{L_0/K}L_0^\times\times U_{\widetilde{\mathbb X}(L/K)}^\diamond/
Y_{L/L_0}\rightarrow\text{Gal}(L/K)
\end{equation}
for the extension $L/K$ is a bijection.
\end{lemma}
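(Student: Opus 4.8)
The plan is to recognize $\pmb{H}_{L/K}^{(\varphi)}$ as the ``product'' of two bijections already at our disposal, glued together by the decomposition of $\text{Gal}(L/K)$ recorded in eq.s (\ref{directproduct}) and (\ref{directproduct-definition}). Concretely, since $L_0=L\cap K^{nr}=K^{nr}_d$ is unramified of degree $d$ over $K$, local class field theory gives the isomorphism $K^\times/N_{L_0/K}L_0^\times\xrightarrow{\sim}\text{Gal}(L_0/K)$, $\pi_K^m N_{L_0/K}L_0^\times\mapsto\varphi^m\!\mid_{L_0}$, of cyclic groups of order $d$; in particular the coset $\pi_K^m N_{L_0/K}L_0^\times$ depends only on $m$ modulo $d$. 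On the other hand, by Remark \ref{key-remark} the extension $L/L_0$ is infinite totally-ramified $APF$-Galois with $L_0\subseteq L\subseteq (L_0)_{\varphi'}$ for $\varphi'=\varphi^d$ a Lubin-Tate splitting over $L_0$, and since $\pmb{\mu}_p(K^{sep})\subset K\subseteq L_0$ the hypothesis (\ref{rootofunity}) holds with $L_0$ in place of $K$; thus Fesenko's theory applies to $L/L_0$ and the extended Hazewinkel map $H_{L/L_0}^{(\varphi^d)}:U_{\widetilde{\mathbb X}(L/K)}^\diamond/Y_{L/L_0}\to\text{Gal}(L/L_0)$ is a bijection (recall $\widetilde{\mathbb X}(L/K)=\widetilde{\mathbb X}(L/L_0)$ and $U_{\widetilde{\mathbb X}(L/K)}^\diamond=U_{\widetilde{\mathbb X}(L/L_0)}^\diamond$).

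Next I would transport everything through the bijection $\text{Gal}(L/K)\xrightarrow{\sim}\text{Gal}(L_0/K)\times\text{Gal}(L/L_0)$ of eq. (\ref{directproduct}). An element $H_{L/L_0}^{(\varphi^d)}(U.Y_{L/L_0})\in\text{Gal}(L/L_0)$ restricts to $\text{id}_{L_0}$, so under (\ref{directproduct-definition}) it corresponds to the pair $(\text{id}_{L_0},H_{L/L_0}^{(\varphi^d)}(U.Y_{L/L_0}))$; the element $\varphi^m\!\mid_L$ has restriction $\varphi^m\!\mid_{L_0}$ to $L_0$ and, being $\varphi^m$ itself, corresponds to $(\varphi^m\!\mid_{L_0},\text{id}_L)$. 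Because $L\subseteq K_{\varphi^d}$ forces $\varphi^d\!\mid_L=\text{id}_L$, the product $\varphi^m\!\mid_L\cdot H_{L/L_0}^{(\varphi^d)}(U.Y_{L/L_0})$ appearing in (\ref{generalized-hazewinkel-definition}) is well-defined on the coset $\pi_K^m N_{L_0/K}L_0^\times$ and corresponds under (\ref{directproduct}) to the pair $\bigl(\varphi^m\!\mid_{L_0},H_{L/L_0}^{(\varphi^d)}(U.Y_{L/L_0})\bigr)$. Hence, modulo the identification (\ref{directproduct}), $\pmb{H}_{L/K}^{(\varphi)}$ is exactly the product of the local-class-field-theory isomorphism of the first paragraph and the Fesenko bijection $H_{L/L_0}^{(\varphi^d)}$, and is therefore a bijection.

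To make the argument self-contained I would then spell out the two implications directly. For injectivity: an equality $\varphi^m\!\mid_L H_{L/L_0}^{(\varphi^d)}(\overline U)=\varphi^{m'}\!\mid_L H_{L/L_0}^{(\varphi^d)}(\overline{U'})$ restricted to $L_0$ gives $\varphi^m\!\mid_{L_0}=\varphi^{m'}\!\mid_{L_0}$, i.e. $m\equiv m'\pmod d$, whence $\pi_K^m N_{L_0/K}L_0^\times=\pi_K^{m'}N_{L_0/K}L_0^\times$ and $\varphi^m\!\mid_L=\varphi^{m'}\!\mid_L$; cancelling this factor and invoking injectivity of $H_{L/L_0}^{(\varphi^d)}$ yields $\overline U=\overline{U'}$. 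For surjectivity: given $\sigma\in\text{Gal}(L/K)$, choose $0\leq m\in\mathbb Z$ with $\sigma\!\mid_{L_0}=\varphi^m\!\mid_{L_0}$, so $\varphi^{-m}\sigma\in\text{Gal}(L/L_0)$; pick $\overline U$ with $H_{L/L_0}^{(\varphi^d)}(\overline U)=\varphi^{-m}\sigma$, and then $\pmb{H}_{L/K}^{(\varphi)}\bigl((\pi_K^m N_{L_0/K}L_0^\times,\overline U)\bigr)=\varphi^m\!\mid_L\cdot\varphi^{-m}\sigma=\sigma$. Since every ingredient — local class field theory for the unramified extension $L_0/K$, the decomposition (\ref{directproduct}), and Fesenko's bijectivity theorem for $H_{L/L_0}^{(\varphi^d)}$ — is already in hand, the only genuine task is the bookkeeping of the Frobenius factor $\varphi^m\!\mid_L$ across (\ref{directproduct}); this is the sole point to verify, and it is routine, which is why the statement is recorded as clear.
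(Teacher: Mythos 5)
Your proposal is correct and follows essentially the same route as the paper: the paper's (one-line) proof also reduces the statement to the decomposition $\text{Gal}(L/K)\simeq\text{Gal}(L_0/K)\times\text{Gal}(L/L_0)$ of eq. (\ref{directproduct}), the bijectivity of Fesenko's extended Hazewinkel map $H_{L/L_0}^{(\varphi^d)}$ (Lemma 5.22 of \cite{ikeda-serbest}), and abelian local class field theory for the unramified extension $L_0/K$. Your spelled-out injectivity/surjectivity bookkeeping, including the observation that $\varphi^d\mid_L=\text{id}_L$ makes the Frobenius factor well-defined on the coset $\pi_K^m N_{L_0/K}L_0^\times$, is exactly the routine verification the paper leaves implicit by declaring the lemma clear.
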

\begin{proof}
The proof follows from the isomorphism
$\text{Gal}(L/K)\simeq\text{Gal}(L_0/K)\times\text{Gal}(L/L_0)$ combined with
the bijectivity of $H_{L/L_0}^{(\varphi^d)}:U_{\widetilde{\mathbb X}(L/K)}
^\diamond/Y_{L/L_0}\rightarrow\text{Gal}(L/L_0)$ (cf. Lemma 5.22 of 
\cite{ikeda-serbest}) and abelian local class field theory for the extension 
$L_0/K$.
\end{proof}
Now, consider the composition of arrows
\begin{equation}
\SelectTips{cm}{}\xymatrix{
{\text{Gal}(L/K)}\ar[r]^-{\pmb{\phi}_{L/K}^{(\varphi)}}
\ar@{.>}[dr]_-{\pmb{\Phi}_{L/K}^{(\varphi)}} & {K^\times/N_{L_0/K}L_0^\times
\times U_{\widetilde{\mathbb X}(L/K)}^\diamond/U_{\mathbb X(L/K)}}
\ar[d]^{(\text{id}_{K^\times/N_{L_0/K}L_0^\times}, c_{L/L_0})} \\
{} & {K^\times/N_{L_0/K}L_0^\times
\times U_{\widetilde{\mathbb X}(L/K)}^\diamond/Y_{L/L_0}}
}
\end{equation}
where $c_{L/L_0}:U_{\widetilde{\mathbb X}(L/K)}^\diamond/U_{\mathbb X(L/K)}
\rightarrow U_{\widetilde{\mathbb X}(L/K)}^\diamond/Y_{L/L_0}$ is the
canonical map defined via the inclusion 
$U_{\mathbb X(L/K)}\subseteq Y_{L/L_0}$. 
Recall that (cf. eq. 5.35 of \cite{ikeda-serbest}), the composition
$c_{L/L_0}\circ\phi_{L/L_0}^{(\varphi^d)}=\Phi_{L/L_0}^{(\varphi^d)}:
\text{Gal}(L/L_0)\rightarrow U_{\widetilde{\mathbb X}(L/K)}^\diamond
/Y_{L/L_0}$ is the reciprocity map of Fesenko for the extension $L/L_0$.
Now, let $\sigma\in\text{Gal}(L/K)$.
Let $0\leq m\in\mathbb Z$ such that $\sigma\mid_{L_0}=\varphi^m\mid_{L_0}$
and $\varphi^{-m}\sigma\in\text{Gal}(L/L_0)$. Then following the definition,
\begin{equation*}
\begin{aligned}
\pmb{H}_{L/K}^{(\varphi)}\circ\pmb{\Phi}_{L/K}^{(\varphi)}(\sigma) &=
\pmb{H}_{L/K}^{(\varphi)}\left(\pi_K^m.N_{L_0/K}L_0^\times,
c_{L/L_0}\circ\phi_{L/L_0}^{(\varphi^d)}(\varphi^{-m}\sigma)\right)\\
&=\pmb{H}_{L/K}^{(\varphi)}\left(\pi_K^m.N_{L_0/K}L_0^\times,
\Phi_{L/L_0}^{(\varphi^d)}(\varphi^{-m}\sigma)\right)\\
&=\varphi^m\mid_L H_{L/L_0}^{(\varphi^d)}\left(\Phi_{L/L_0}^{(\varphi^d)}
(\varphi^{-m}\sigma)\right)\\
&=\varphi^m\mid_L (\varphi^{-m}\sigma)\\
&=\sigma,
\end{aligned}
\end{equation*}
by Lemma 5.23 of \cite{ikeda-serbest}.
For $0\leq m\in\mathbb Z$ and $U\in U_{\widetilde{\mathbb X}(L/K)}
^\diamond$, let $(\pi_K^m.N_{L_0/K}L_0^\times,U.Y_{L/L_0})\in
K^\times/N_{L_0/K}L_0^\times\times U_{\widetilde{\mathbb X}(L/K)}
^\diamond/Y_{L/L_0}$. Now, again following the definition,
\begin{equation*}
\begin{aligned}
\pmb{\Phi}_{L/K}^{(\varphi)}\circ\pmb{H}_{L/K}^{(\varphi)}\left(
(\pi_K^m.N_{L_0/K}L_0^\times,U.Y_{L/L_0})\right)&=
\pmb{\Phi}_{L/K}^{(\varphi)}\left(\varphi^m\mid_{L}H_{L/L_0}^{(\varphi^d)}
(U.Y_{L/L_0})\right) \\
&=(\text{id}_{K^\times/N_{L_0/K}L_0^\times},c_{L/L_0})\circ
\pmb{\phi}_{L/K}^{(\varphi)}
\left(\varphi^m\mid_{L}H_{L/L_0}^{(\varphi^d)}(U.Y_{L/L_0})\right)\\
&=(\text{id}_{K^\times/N_{L_0/K}L_0^\times},c_{L/L_0})
\left(\pi_K^m.N_{L_0/K}L_0^\times,\phi_{L/L_0}^{(\varphi^d)}(H_{L/L_0}
^{(\varphi^d)}(U.Y_{L/L_0})\right)\\
&=\left(\pi_K^m.N_{L_0/K}L_0^\times,\Phi_{L/L_0}^{(\varphi^d)}(H_{L/L_0}
^{(\varphi^d)}(U.Y_{L/L_0})\right)\\
&=\left(\pi_K^m.N_{L_0/K}L_0^\times,U.Y_{L/L_0}\right),
\end{aligned}
\end{equation*}
by Lemma 5.23 of \cite{ikeda-serbest}. Thus, these computations yield
\begin{equation}
\label{identity1}
\pmb{H}_{L/K}^{(\varphi)}\circ\pmb{\Phi}_{L/K}^{(\varphi)}
=\text{id}_{\text{Gal}(L/K)}
\end{equation}
and
\begin{equation}
\label{identity2}
\pmb{\Phi}_{L/K}^{(\varphi)}\circ\pmb{H}_{L/K}^{(\varphi)}
=\text{id}_{K^\times/N_{L_0/K}L_0^\times\times U_{\widetilde{\mathbb X}(L/K)}
^\diamond/Y_{L/L_0}}.
\end{equation}

Note that, there is a natural continuous action of $\text{Gal}(L/K)$ on the
topological group  
$K^\times/N_{L_0/K}L_0^\times\times U_{\widetilde{\mathbb X}(L/K)}^\diamond
/Y_{L/L_0}$ 
defined by abelian local class field theory on
the first component and by eq. (5.7) and Lemma 5.20 of \cite{ikeda-serbest}
on the second component as
\begin{equation}
\label{galoismodule2}
(\overline{a},\overline{U})^\sigma=
\left(\overline{a}^{\varphi^m},\overline{U}^{\varphi^{-m}\sigma}\right)=
\left(\overline{a},\overline{U}^{\varphi^{-m}\sigma}\right),
\end{equation}
for every $\sigma\in\text{Gal}(L/K)$, where $\sigma\mid_{L_0}=\varphi^m$
for some $0\leq m\in\mathbb Z$, and
for every $a\in K^\times$ with $\overline{a}=a.N_{L_0/K}L_0^\times$ 
and $U\in U_{\widetilde{\mathbb X}(L/K)}^\diamond$ with
$\overline{U}=U.Y_{L/L_0}$.
\textit{We shall always view 
$K^\times/N_{L_0/K}L_0^\times
\times U_{\widetilde{\mathbb X}(L/K)}^\diamond/Y_{L/L_0}$
as a topological $\text{Gal}(L/K)$-module in this text}.

So, we have the following theorem, which follows from Theorem \ref{cocycle1},
Lemma \ref{bijection} and eq.s (\ref{identity1}) and (\ref{identity2})
combined with the fact that $U_{\mathbb X(L/K)}$ is a topological
$\text{Gal}(L/L_0)$-submodule of $Y_{L/L_0}$.
\begin{theorem}
\label{cocycle2}
Suppose that the local field $K$ satisfies the condition given in eq.
(\ref{rootofunity}). Let $L/K$ be an infinite $APF$-Galois sub-extension
of $K_{\varphi^d}/K$, where $d=[\kappa_L:\kappa_K]$. The mapping
\begin{equation*}
\pmb{\Phi}_{L/K}^{(\varphi)}:\text{Gal}(L/K)\rightarrow 
K^\times/N_{L_0/K}L_0^\times\times U_{\widetilde{\mathbb X}{(L/K)}}
^\diamond/Y_{L/L_0}
\end{equation*}
defined for the extension $L/K$ is a bijection with the inverse 
\begin{equation*}
\pmb{H}_{L/K}^{(\varphi)}:K^\times/N_{L_0/K}L_0^\times\times 
U_{\widetilde{\mathbb X}{(L/K)}}^\diamond/Y_{L/L_0}\rightarrow
\text{Gal}(L/K).
\end{equation*} 
For every $\sigma,\tau\in\text{Gal}(L/K)$,
\begin{equation}
\pmb{\Phi}_{L/K}^{(\varphi)}(\sigma\tau)=\pmb{\Phi}_{L/K}^{(\varphi)}(\sigma)
\pmb{\Phi}_{L/K}^{(\varphi)}(\tau)^\sigma
\end{equation}
co-cycle condition is satisfied. 
\end{theorem}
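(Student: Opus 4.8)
The plan is to deduce all assertions from facts already in hand, using the factorization $\pmb{\Phi}_{L/K}^{(\varphi)} = (\text{id}_{K^\times/N_{L_0/K}L_0^\times}, c_{L/L_0})\circ\pmb{\phi}_{L/K}^{(\varphi)}$ together with two properties of the canonical surjection $c_{L/L_0}:U_{\widetilde{\mathbb X}(L/K)}^\diamond/U_{\mathbb X(L/K)}\rightarrow U_{\widetilde{\mathbb X}(L/K)}^\diamond/Y_{L/L_0}$ coming from $U_{\mathbb X(L/K)}\subseteq Y_{L/L_0}$: that it is a group homomorphism, and that it is $\text{Gal}(L/K)$-equivariant for the module structures of eq.s (\ref{galoismodule1}) and (\ref{galoismodule2}).

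Bijectivity is immediate. The explicit computations carried out above (via Lemma 5.23 of \cite{ikeda-serbest}) already establish eq.s (\ref{identity1}) and (\ref{identity2}), i.e. $\pmb{H}_{L/K}^{(\varphi)}\circ\pmb{\Phi}_{L/K}^{(\varphi)}=\text{id}_{\text{Gal}(L/K)}$ and $\pmb{\Phi}_{L/K}^{(\varphi)}\circ\pmb{H}_{L/K}^{(\varphi)}=\text{id}$. Hence $\pmb{\Phi}_{L/K}^{(\varphi)}$ is a bijection with two-sided inverse $\pmb{H}_{L/K}^{(\varphi)}$; this is also consistent with Lemma \ref{bijection}, which gives bijectivity of $\pmb{H}_{L/K}^{(\varphi)}$ independently.

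For the cocycle relation, start from Theorem \ref{cocycle1}: for $\sigma,\tau\in\text{Gal}(L/K)$ one has $\pmb{\phi}_{L/K}^{(\varphi)}(\sigma\tau)=\pmb{\phi}_{L/K}^{(\varphi)}(\sigma)\,\pmb{\phi}_{L/K}^{(\varphi)}(\tau)^{\sigma}$ in $K^\times/N_{L_0/K}L_0^\times\times U_{\widetilde{\mathbb X}(L/K)}^\diamond/U_{\mathbb X(L/K)}$, the product and $\sigma$-action being componentwise as in eq. (\ref{galoismodule1}). Apply $(\text{id},c_{L/L_0})$ to both sides. Since $U_{\mathbb X(L/K)}\subseteq Y_{L/L_0}$, the map $c_{L/L_0}$ is a homomorphism, so $(\text{id},c_{L/L_0})$ respects the componentwise products, giving $\pmb{\Phi}_{L/K}^{(\varphi)}(\sigma\tau)=\pmb{\Phi}_{L/K}^{(\varphi)}(\sigma)\cdot(\text{id},c_{L/L_0})\big(\pmb{\phi}_{L/K}^{(\varphi)}(\tau)^{\sigma}\big)$. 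It remains to check $(\text{id},c_{L/L_0})(\pmb{\phi}_{L/K}^{(\varphi)}(\tau)^{\sigma})=\pmb{\Phi}_{L/K}^{(\varphi)}(\tau)^{\sigma}$, i.e. that $c_{L/L_0}(\overline{U}^{\varphi^{-m}\sigma})=c_{L/L_0}(\overline{U})^{\varphi^{-m}\sigma}$ for $U\in U_{\widetilde{\mathbb X}(L/K)}^\diamond$ and $\sigma|_{L_0}=\varphi^m$; granting this, the cocycle identity for $\pmb{\Phi}_{L/K}^{(\varphi)}$ follows at once.

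The one point needing care — the main obstacle, though routine given the cited results — is precisely this equivariance of $c_{L/L_0}$: the $\text{Gal}(L/K)$-action on the source (eq. (\ref{galoismodule1}), defined through $U_{\mathbb X(L/K)}$ after the $\varphi^{-m}$-twist of (\ref{directproduct-definition})) must descend along $c_{L/L_0}$ to that on the target (eq. (\ref{galoismodule2}), defined through $Y_{L/L_0}$). This reduces to $Y_{L/L_0}$ being a topological $\text{Gal}(L/L_0)$-submodule of $U_{\widetilde{\mathbb X}(L/K)}^\diamond$ containing the $\text{Gal}(L/L_0)$-submodule $U_{\mathbb X(L/K)}$ (cf. eq. (5.7) and Lemma 5.20 of \cite{ikeda-serbest}), the first components carrying the identical abelian-local-class-field-theory action in both cases. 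With equivariance in place, the theorem is a direct transport of Theorem \ref{cocycle1}, Lemma \ref{bijection}, and eq.s (\ref{identity1})--(\ref{identity2}) through the homomorphism $(\text{id},c_{L/L_0})$.
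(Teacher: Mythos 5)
Your proposal is correct and is essentially the paper's own argument: the paper likewise deduces the theorem from Theorem \ref{cocycle1}, Lemma \ref{bijection} and the identities (\ref{identity1})--(\ref{identity2}), using that $U_{\mathbb X(L/K)}$ is a topological $\text{Gal}(L/L_0)$-submodule of $Y_{L/L_0}$, which is exactly the equivariance of $c_{L/L_0}$ you isolate. Your write-up merely makes explicit the transport of the cocycle identity through $(\text{id}_{K^\times/N_{L_0/K}L_0^\times},c_{L/L_0})$, which the paper leaves implicit.
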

By Corollary \ref{grouplaw1}, Theorem \ref{cocycle2} has the following
consequence.
\begin{corollary}
\label{grouplaw2}
Define a law of composition $\ast$ on 
$K^\times/N_{L_0/K}L_0^\times\times U_{\widetilde{\mathbb X}{(L/K)}}
^\diamond/Y_{L/L_0}$ by
\begin{equation}
\label{star2}
(\overline{a},\overline{U})\ast(\overline{b},\overline{V})=
(\overline{a},\overline{U}).(\overline{b},\overline{V})
^{(\pmb{\Phi}_{L/K}^{(\varphi)})^{-1}((\overline{a},\overline{U}))}
\end{equation}
for every $\overline{a}=a.N_{L_0/K}L_0^\times, \overline{b}
=b.N_{L_0/K}L_0^\times\in K^\times/N_{L_0/K}L_0^\times$ with $a,b\in K^\times$
and $\overline{U}=U.Y_{L/L_0}, \overline{V}=V.Y_{L/L_0}\in 
U_{\widetilde{\mathbb X}{(L/K)}}^\diamond/Y_{L/L_0}$ with $U,V\in
U_{\widetilde{\mathbb X}{(L/K)}}^\diamond$. Then
$K^\times/N_{L_0/K}L_0^\times\times U_{\widetilde{\mathbb X}{(L/K)}}
^\diamond/Y_{L/L_0}$ is a topological 
group under $\ast$, and the map $\pmb{\Phi}_{L/K}^{(\varphi)}$ induces 
an isomorphism of topological groups
\begin{equation}
\pmb{\Phi}_{L/K}^{(\varphi)}:\text{Gal}(L/K)\xrightarrow{\sim}
K^\times/N_{L_0/K}L_0^\times\times U_{\widetilde{\mathbb X}{(L/K)}}
^\diamond/Y_{L/L_0}, 
\end{equation}
where the topological group structure on 
$K^\times/N_{L_0/K}L_0^\times\times U_{\widetilde{\mathbb X}{(L/K)}}
^\diamond/Y_{L/L_0}$ is defined with
respect to the binary operation $\ast$ defined by eq. (\ref{star2}).
\end{corollary}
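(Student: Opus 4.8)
The plan is to obtain the corollary as a routine transport of structure along the bijection $\pmb{\Phi}_{L/K}^{(\varphi)}$, using Theorem \ref{cocycle2} for the group-theoretic part and Corollary \ref{grouplaw1} (together with the map $(\text{id},c_{L/L_0})$) for the topological part. The point is that the operation $\ast$ of eq. (\ref{star2}) is \emph{defined} precisely so that $\pmb{\Phi}_{L/K}^{(\varphi)}$ becomes a homomorphism once the cocycle relation is taken into account.

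First I would verify the algebraic statement. By Theorem \ref{cocycle2} (and Lemma \ref{bijection}) the arrow $\pmb{\Phi}:=\pmb{\Phi}_{L/K}^{(\varphi)}$ is a bijection with inverse $\pmb{H}_{L/K}^{(\varphi)}$, so any two elements of $K^\times/N_{L_0/K}L_0^\times\times U_{\widetilde{\mathbb X}(L/K)}^\diamond/Y_{L/L_0}$ can be written as $\pmb{\Phi}(\sigma)$ and $\pmb{\Phi}(\tau)$ with $\sigma,\tau\in\text{Gal}(L/K)$. Since $(\pmb{\Phi})^{-1}(\pmb{\Phi}(\sigma))=\sigma$, the definition (\ref{star2}) yields $\pmb{\Phi}(\sigma)\ast\pmb{\Phi}(\tau)=\pmb{\Phi}(\sigma)\cdot\pmb{\Phi}(\tau)^{\sigma}=\pmb{\Phi}(\sigma\tau)$, the last equality being exactly the cocycle condition of Theorem \ref{cocycle2}. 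Hence $\ast$ is well defined (independently of the representatives $\sigma,\tau$ and of any choice of $m$), it carries the group law of $\text{Gal}(L/K)$ over to the target, so it is associative, has neutral element $\pmb{\Phi}(\text{id}_L)=(1_{K^\times/N_{L_0/K}L_0^\times},1)$ and inverses ($\pmb{\Phi}(\sigma)^{-1}=\pmb{\Phi}(\sigma^{-1})$), and $\pmb{\Phi}_{L/K}^{(\varphi)}:\text{Gal}(L/K)\to K^\times/N_{L_0/K}L_0^\times\times U_{\widetilde{\mathbb X}(L/K)}^\diamond/Y_{L/L_0}$ is an isomorphism of abstract groups with inverse $\pmb{H}_{L/K}^{(\varphi)}$.

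Next I would upgrade this to an isomorphism of topological groups. Give the target its natural topology, namely the product of the discrete finite group $K^\times/N_{L_0/K}L_0^\times\simeq\text{Gal}(L_0/K)$ with the quotient topology on $U_{\widetilde{\mathbb X}(L/K)}^\diamond/Y_{L/L_0}$; since $Y_{L/L_0}$ is a closed subgroup of $U_{\widetilde{\mathbb X}(L/K)}^\diamond$ (cf. \cite{fesenko2005,ikeda-serbest}), the target is Hausdorff, while $\text{Gal}(L/K)$ is compact. I would then note that $\pmb{\Phi}_{L/K}^{(\varphi)}=(\text{id}_{K^\times/N_{L_0/K}L_0^\times},c_{L/L_0})\circ\pmb{\phi}_{L/K}^{(\varphi)}$ is continuous: by Corollary \ref{grouplaw1} the map $\pmb{\phi}_{L/K}^{(\varphi)}$ is continuous (being a topological isomorphism onto $\text{im}(\pmb{\phi}_{L/K}^{(\varphi)})$), and $(\text{id},c_{L/L_0})$ is continuous because $c_{L/L_0}$ is the canonical projection attached to the inclusion $U_{\mathbb X(L/K)}\subseteq Y_{L/L_0}$. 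A continuous bijection from a compact space to a Hausdorff space is a homeomorphism, so $\pmb{\Phi}_{L/K}^{(\varphi)}$ is a homeomorphism; being also a group isomorphism onto $\bigl(K^\times/N_{L_0/K}L_0^\times\times U_{\widetilde{\mathbb X}(L/K)}^\diamond/Y_{L/L_0},\ast\bigr)$, it transports the topological group structure of $\text{Gal}(L/K)$, which proves the corollary. (One checks along the way that $(\text{id},c_{L/L_0})$ intertwines the laws (\ref{star1}) and (\ref{star2}), since on second coordinates $(\phi_{L/L_0}^{(\varphi^d)})^{-1}$ and $H_{L/L_0}^{(\varphi^d)}$ agree after the projection $c_{L/L_0}$, because $c_{L/L_0}\circ\phi_{L/L_0}^{(\varphi^d)}=\Phi_{L/L_0}^{(\varphi^d)}$ and $H_{L/L_0}^{(\varphi^d)}\circ\Phi_{L/L_0}^{(\varphi^d)}=\text{id}$; this also gives an independent way to obtain the group law on the target directly from Corollary \ref{grouplaw1}.)

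The only genuinely delicate point is precisely this last compatibility: that passing from the quotient by $U_{\mathbb X(L/K)}$ to the quotient by $Y_{L/L_0}$ does not disturb the twisted (semidirect-product-type) formula for $\ast$, and that the natural topology on the target coincides with the one transported from $\text{Gal}(L/K)$. Both reduce to the corresponding facts of Fesenko's theory for the totally ramified extension $L/L_0$ recorded in \cite{ikeda-serbest} — the continuity and bijectivity of $\Phi_{L/L_0}^{(\varphi^d)}$ and $H_{L/L_0}^{(\varphi^d)}$, and the fact that $U_{\mathbb X(L/K)}$ is a topological $\text{Gal}(L/L_0)$-submodule of $Y_{L/L_0}$ — together with abelian local class field theory for the finite unramified extension $L_0/K$; so no new obstacle arises beyond bookkeeping.
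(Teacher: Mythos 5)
Your proposal is correct and follows essentially the route the paper intends: the paper states Corollary \ref{grouplaw2} as an immediate consequence of Theorem \ref{cocycle2} (bijectivity via $\pmb{H}_{L/K}^{(\varphi)}$ plus the cocycle relation) together with Corollary \ref{grouplaw1}, which is exactly the transport-of-structure argument you carry out, writing elements as $\pmb{\Phi}_{L/K}^{(\varphi)}(\sigma)$ and using $\pmb{\Phi}_{L/K}^{(\varphi)}(\sigma)\ast\pmb{\Phi}_{L/K}^{(\varphi)}(\tau)=\pmb{\Phi}_{L/K}^{(\varphi)}(\sigma\tau)$. Your additional topological details (continuity of $(\text{id},c_{L/L_0})\circ\pmb{\phi}_{L/K}^{(\varphi)}$, compactness of $\text{Gal}(L/K)$, and the compatibility of the laws (\ref{star1}) and (\ref{star2}) under $c_{L/L_0}$) merely make explicit what the paper leaves implicit.
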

\begin{definition}
Let $K$ be a local field satisfying the condition given in eq. 
(\ref{rootofunity}).
Let $L/K$ be an infinite $APF$-Galois sub-extension of $K_{\varphi^d}/K$,
where $d=[\kappa_L:\kappa_K]$.
The mapping 
\begin{equation*}
\pmb{\Phi}_{L/K}^{(\varphi)}:\text{Gal}(L/K)\rightarrow
K^\times/N_{L_0/K}L_0^\times\times U_{\widetilde{\mathbb X}{(L/K)}}
^\diamond/Y_{L/L_0},
\end{equation*} 
defined in Theorem \ref{cocycle2}, is called the 
\textit{generalized Fesenko reciprocity map for the extension $L/K$}.
\end{definition}
For each $0\leq i\in\mathbb R$, we have previously introduced the higher
unit groups
$\left(U_{\widetilde{\mathbb X}(L/K)}^\diamond\right)^i$ of the field
$\widetilde{\mathbb X}(L/K)$ as in eq. (\ref{diamond-group-i}). 
For each $0\leq n\in\mathbb Z$, as in eq. 5.42 of \cite{ikeda-serbest}, let
\begin{equation}
\label{Q-group}
Q_{L/L_0}^n=c_{L/L_0}\left(\left(U_{\widetilde{\mathbb X}(L/K)}
^\diamond\right)^n
U_{\mathbb X(L/K)}/U_{\mathbb X(L/K)}\cap\text{im}(\phi_{L/L_0}
^{(\varphi^d)})\right),
\end{equation}
which is a subgroup of $\left(U_{\widetilde{\mathbb X}(L/K)}^\diamond\right)
^nY_{L/L_0}/Y_{L/L_0}$. Now, ramification theorem, stated in Theorem 
\ref{ramification1}, can be reformulated for the generalized
reciprocity map $\pmb{\Phi}_{L/K}^{(\varphi)}$ corresponding to the extension
$L/K$ as follows.
\begin{theorem}[Ramification theorem]
\label{ramification2} 
Let $K$ be a local field satisfying the condition given in eq.
(\ref{rootofunity}).
For $0\leq u\in\mathbb R$,
let $\text{Gal}(L/K)_u$ denote the $u^{th}$ higher ramification 
subgroup in the lower numbering of the Galois group $\text{Gal}(L/K)$ 
corresponding to the infinite $APF$-Galois sub-extension $L/K$ of 
$K_{\varphi^d}/K$ with residue-class degree $[\kappa_L:\kappa_K]=d$. 
Then, for $0\leq n\in\mathbb Z$, there exists the inclusion
\begin{multline*}
\pmb{\Phi}_{L/K}^{(\varphi)}
\left(\text{Gal}(L/K)_{\psi_{L/K}\circ\varphi_{L/L_0}(n)}
-\text{Gal}(L/K)_{\psi_{L/K}\circ\varphi_{L/L_0}(n+1)}\right)
\subseteq \\
\left<1_{K^\times/N_{L_0/K}L_0^\times}\right>\times
\left(
\left(U_{\widetilde{\mathbb X}{(L/K)}}^\diamond\right)^{n} 
Y_{L/L_0}/Y_{L/L_0}-Q_{L/L_0}^{n+1}\right).
\end{multline*}
\end{theorem}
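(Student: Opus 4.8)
The plan is to deduce Theorem \ref{ramification2} from the ramification theorem for the generalized arrow $\pmb{\phi}_{L/K}^{(\varphi)}$ (Theorem \ref{ramification1}) by pushing it forward along the canonical map $(\text{id}_{K^\times/N_{L_0/K}L_0^\times},c_{L/L_0})$, just as the reciprocity map $\pmb{\Phi}_{L/K}^{(\varphi)}$ is obtained from $\pmb{\phi}_{L/K}^{(\varphi)}$ in the discussion preceding the statement. First I would record the structural identity already isolated inside the proof of Theorem \ref{ramification1}: for $0\leq u\in\mathbb R$ one has $\text{Gal}(L/K)_u=\text{Gal}(L/L_0)^{\varphi_{L/K}(u)}$, and for every $\tau$ in this group the first coordinate of $\pmb{\phi}_{L/K}^{(\varphi)}(\tau)$ is trivial while $\varphi^{-m}\tau=\tau$ in $\text{Gal}(L/L_0)$ (as $d\mid m$ and $L\subset K_{\varphi^d}$). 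Composing with $(\text{id},c_{L/L_0})$ and using $c_{L/L_0}\circ\phi_{L/L_0}^{(\varphi^d)}=\Phi_{L/L_0}^{(\varphi^d)}$ gives
\begin{equation*}
\pmb{\Phi}_{L/K}^{(\varphi)}(\tau)=\left(1_{K^\times/N_{L_0/K}L_0^\times},\ \Phi_{L/L_0}^{(\varphi^d)}(\tau)\right),
\end{equation*}
which accounts for the factor $\left<1_{K^\times/N_{L_0/K}L_0^\times}\right>$ and reduces the assertion to a statement about the Fesenko reciprocity map of the totally-ramified $APF$-extension $L/L_0$ (to which Fesenko theory applies by Remark \ref{key-remark}).

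Next I would translate the ramification filtration. Setting $u=\psi_{L/K}\circ\varphi_{L/L_0}(n)$, so that $\varphi_{L/K}(u)=\varphi_{L/L_0}(n)$, the computation already carried out at the end of the proof of Theorem \ref{ramification1} yields
\begin{equation*}
\text{Gal}(L/K)_u=\text{Gal}(L/L_0)^{\varphi_{L/K}(u)}=\text{Gal}(L/L_0)^{\varphi_{L/L_0}(n)}=\text{Gal}(L/L_0)_n,
\end{equation*}
and likewise $\text{Gal}(L/K)_{\psi_{L/K}\circ\varphi_{L/L_0}(n+1)}=\text{Gal}(L/L_0)_{n+1}$. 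Hence any $\sigma$ lying in the difference set on the left of the claimed inclusion lies in $\text{Gal}(L/L_0)_n-\text{Gal}(L/L_0)_{n+1}$, and by the displayed identity above $\pmb{\Phi}_{L/K}^{(\varphi)}(\sigma)=\left(1_{K^\times/N_{L_0/K}L_0^\times},\Phi_{L/L_0}^{(\varphi^d)}(\sigma)\right)$. It then remains to invoke the $Q$-group reformulation of Fesenko's ramification theorem for $L/L_0$ (Theorem 5.8 of \cite{ikeda-serbest} rephrased via $c_{L/L_0}$ and the inclusion $U_{\mathbb X(L/K)}\subseteq Y_{L/L_0}$, cf. eq. 5.42 ff. of \cite{ikeda-serbest}), namely
\begin{equation*}
\Phi_{L/L_0}^{(\varphi^d)}\left(\text{Gal}(L/L_0)_n-\text{Gal}(L/L_0)_{n+1}\right)\subseteq\left(U_{\widetilde{\mathbb X}(L/K)}^\diamond\right)^nY_{L/L_0}/Y_{L/L_0}-Q_{L/L_0}^{n+1},
\end{equation*}
and combine it with the reduction of the first paragraph to obtain exactly the inclusion in the statement.

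The genuinely delicate point — as opposed to the routine bookkeeping of the ramification-break translation — is the passage from the clean difference of higher-unit-group images on the right of Theorem \ref{ramification1} to the difference involving $Q_{L/L_0}^{n+1}$: since $c_{L/L_0}$ has nontrivial kernel ($U_{\mathbb X(L/K)}\subseteq Y_{L/L_0}$ is proper in general), one must check that $\Phi_{L/L_0}^{(\varphi^d)}(\sigma)$ cannot land in $Q_{L/L_0}^{n+1}$. This is precisely where the definition (\ref{Q-group}) of $Q_{L/L_0}^{n+1}$ as $c_{L/L_0}$ of $\left(U_{\widetilde{\mathbb X}(L/K)}^\diamond\right)^{n+1}U_{\mathbb X(L/K)}/U_{\mathbb X(L/K)}\cap\text{im}(\phi_{L/L_0}^{(\varphi^d)})$ is used, together with the injectivity of $\phi_{L/L_0}^{(\varphi^d)}$ (Theorem 5.6 of Fesenko, cf. \cite{ikeda-serbest}): were $\Phi_{L/L_0}^{(\varphi^d)}(\sigma)$ in $Q_{L/L_0}^{n+1}$, it would coincide with the $c_{L/L_0}$-image of $\phi_{L/L_0}^{(\varphi^d)}(\sigma')$ for some $\sigma'$ with $\phi_{L/L_0}^{(\varphi^d)}(\sigma')\in\left(U_{\widetilde{\mathbb X}(L/K)}^\diamond\right)^{n+1}U_{\mathbb X(L/K)}/U_{\mathbb X(L/K)}$, forcing $\sigma\in\text{Gal}(L/L_0)_{n+1}$ by Theorem \ref{ramification1} and the bijectivity of $\phi_{L/L_0}^{(\varphi^d)}$ on ramification subgroups — contradicting $\sigma\notin\text{Gal}(L/L_0)_{n+1}$. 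Since this verification is exactly the content already established in \cite{ikeda-serbest} for the extension $L/L_0$, the proof here consists essentially in the two reductions above followed by citing it.
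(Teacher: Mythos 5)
Your proposal is correct and follows essentially the same route as the paper: the paper likewise reuses the first-paragraph observation from the proof of Theorem \ref{ramification1} to get $\pmb{\Phi}_{L/K}^{(\varphi)}(\tau)=\left(1_{K^\times/N_{L_0/K}L_0^\times},\Phi_{L/L_0}^{(\varphi^d)}(\tau)\right)$, translates the ramification breaks via $\text{Gal}(L/K)_{\psi_{L/K}\circ\varphi_{L/L_0}(n)}=\text{Gal}(L/L_0)_n$, and then simply cites the $Q$-group ramification theorem for $\Phi_{L/L_0}^{(\varphi^d)}$ (Theorem 5.27 of \cite{ikeda-serbest}). Your closing paragraph merely re-derives that cited result (and, as stated, should appeal to the injectivity of $\Phi_{L/L_0}^{(\varphi^d)}$ rather than of $\phi_{L/L_0}^{(\varphi^d)}$ to identify $\sigma$ with $\sigma'$), but since the citation suffices this does not affect the argument.
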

\begin{proof}
Following the general observation made in the first paragarph of the
proof of Theorem \ref{ramification1}, for $0\leq u\in\mathbb R$ and 
for $\tau\in\text{Gal}(L/K)_u=\text{Gal}(L/L_0)^{\varphi_{L/K}(u)}$,
\begin{equation*}
\pmb{\phi}_{L/K}^{(\varphi)}(\tau) = 
\left(1_{K^\times/N_{L_0/K}L_0^\times}, 
\phi_{L/L_0}^{(\varphi')}(\tau)\right),
\end{equation*}
where $\varphi'=\varphi^d$. Thus, following the definition,
\begin{equation*}
\begin{aligned}
\pmb{\Phi}_{L/K}^{(\varphi)}(\tau) &= 
\left(1_{K^\times/N_{L_0/K}L_0^\times}, 
c_{L/L_0}\circ\phi_{L/L_0}^{(\varphi')}(\tau)\right)\\
&=\left(1_{K^\times/N_{L_0/K}L_0^\times}, 
\Phi_{L/L_0}^{(\varphi')}(\tau)\right).
\end{aligned}
\end{equation*}
Now, to prove the theorem, let
$u=\psi_{L/K}\circ\varphi_{L/L_0}(n)$
and $u'=\psi_{L/K}\circ\varphi_{L/L_0}(n+1)$. Then, for
any $\tau\in\text{Gal}(L/K)_u-\text{Gal}(L/K)_{u'}$,
it follows from the ramification theorem 
(cf. Theorem 5.27 in \cite{ikeda-serbest}) that
the second coordinate of $\pmb{\Phi}_{L/K}^{(\varphi)}(\tau)$
satisfies 
\begin{equation*}
\Phi_{L/L_0}^{(\varphi')}(\tau)\in
\left(U_{\widetilde{\mathbb X}{(L/K)}}^\diamond\right)^{n}Y_{L/L_0}/Y_{L/L_0}-
Q_{L/L_0}^{n+1},
\end{equation*}
since 
\begin{equation*}
\text{Gal}(L/K)_u=\text{Gal}(L/L_0)^{\varphi_{L/K}(u)}
=\text{Gal}(L/L_0)^{\varphi_{L/L_0}(n)}=\text{Gal}(L/L_0)_n
\end{equation*} 
and likewise
\begin{equation*}
\text{Gal}(L/K)_{u'}=\text{Gal}(L/L_0)^{\varphi_{L/K}(u')}
=\text{Gal}(L/L_0)^{\varphi_{L/L_0}(n+1)}=\text{Gal}(L/L_0)_{n+1},
\end{equation*}
which completes the proof.
\end{proof}
Let $K$ be a local field satisfying the condition given in 
eq. (\ref{rootofunity}).
Let $M/K$ be an infinite Galois sub-extension of $L/K$. Thus, by 
Lemma 3.3 of \cite{ikeda-serbest}, $M$ is an $APF$-Galois extension over $K$.
We further assume that, the residue-class degree $[\kappa_M:\kappa_K]=d'$ and 
$K\subset M\subset K_{\varphi^{d'}}$ for some $d'\mid d$. 
Let
\begin{equation*}
\pmb{\Phi}_{M/K}^{(\varphi)}:\text{Gal}(M/K)\rightarrow
K^\times/N_{M_0/K}M_0^\times
\times U_{\widetilde{\mathbb X}(M/K)}^\diamond/Y_{M/M_0}
\end{equation*}
be the corresponding generalized Fesenko reciprocity map defined for the 
extension $M/K$. Here, $M_0$ is defined as usual by 
$M_0=M\cap K^{nr}=K_{d'}^{nr}$. 
Now, fix a basic sequence 
\begin{equation*}
L_o=E_o\subset E_1\subset\cdots\subset E_i\subset\cdots\subset L
\end{equation*}
for the extension $L/L_0$. Now, following the notation of \cite{fesenko2005}
and \cite{ikeda-serbest}, introduce for each $1\leq i\in\mathbb Z$,
an element $\sigma_i$ in $\text{Gal}(\widetilde{L}/\widetilde{K})$ such
that $\left<\sigma\mid_{E_i}\right>=\text{Gal}(E_i/E_{i-1})$. Further,
for each $1\leq k, i\in\mathbb Z$,
introduce the map $h_k^{(L/L_o)}:\prod_{1\leq i\leq k}U^{\sigma_i-1}
_{\widetilde{E}_k}\rightarrow\left(\prod_{1\leq i\leq k+1}
U^{\sigma_i-1}_{\widetilde{E}_{k+1}}\right)/U^{\sigma_{k+1}-1}
_{\widetilde{E}_{k+1}}$, the map
$g_k^{(L/L_o)}:\prod_{1\leq i\leq k}U_{\widetilde{E}_k}^{\sigma_i-1}\rightarrow
\prod_{1\leq i\leq k+1}U_{\widetilde{E}_{k+1}}^{\sigma_i-1}$ and the map
$f_i^{(L/L_o)}:U_{\widetilde{E}_i}^{\sigma_i-1}\rightarrow 
U_{\widetilde{\mathbb X}(L/E_i)}\xrightarrow{\Lambda_{E_i/E_o}} 
U_{\widetilde{\mathbb X}(L/K)}$
following \cite{fesenko2005} and \cite{ikeda-serbest}.
Now, fix the sequence
\begin{equation*}
M_o=E_o\cap M\subseteq E_1\cap M\subseteq\cdots\subseteq E_i\cap M\subseteq
\cdots\subseteq M
\end{equation*}
for the extension $M/M_o$. Define, for each $1\leq k\in\mathbb Z$,
a homomorphism
\begin{equation*}
h_k^{(M/M_o)}:\prod_{1\leq i\leq k}U^{\sigma_i\mid_{\widetilde{M}}-1}
_{\widetilde{E_k\cap M}}\rightarrow\left(\prod_{1\leq i\leq k+1}
U^{\sigma_i\mid_{\widetilde M}-1}_{\widetilde{E_{k+1}\cap M}}\right)
/U^{\sigma_{k+1}\mid_{\widetilde M}-1}_{\widetilde{E_{k+1}\cap M}}
\end{equation*}
that satisfies
\begin{equation*}
\left(\prod_{0\leq\ell\leq f(L/M)-1}(\varphi^{d'})^\ell
\widetilde{N}_{E_{k+1}/E_{k+1}\cap M}\right)\circ 
h_k^{(L/L_o)}=h_k^{(M/M_o)}\circ\left(\prod_{0\leq\ell\leq f(L/M)-1}
(\varphi^{d'})^\ell\widetilde{N}_{E_k/E_k\cap M}\right)
\end{equation*}
and any map
\begin{equation*}
g_k^{(M/M_o)}:
\prod_{1\leq i\leq k}U^{\sigma_i\mid_{\widetilde{M}}-1}
_{\widetilde{E_k\cap M}}\rightarrow\prod_{1\leq i\leq k+1}
U^{\sigma_i\mid_{\widetilde M}-1}_{\widetilde{E_{k+1}\cap M}}
\end{equation*}
that satisfies
\begin{equation*}
\left(\prod_{0\leq\ell\leq f(L/M)-1}(\varphi^{d'})^\ell
\widetilde{N}_{E_{k+1}/E_{k+1}\cap M}\right)\circ 
g_k^{(L/L_o)}=g_k^{(M/M_o)}\circ\left(\prod_{0\leq\ell\leq f(L/M)-1}
(\varphi^{d'})^\ell\widetilde{N}_{E_k/E_k\cap M}\right)
\end{equation*}
following the same lines of \cite{fesenko2005} and \cite{ikeda-serbest}. 

Now, for each $1\leq i\in\mathbb Z$, introduce the map
\begin{equation*}
f_i^{(M/M_o)}: U_{\widetilde{E_i\cap M}}^{\sigma_i\mid_{\widetilde{M}}-1}
\rightarrow U_{\widetilde{\mathbb X}(M/K)}
\end{equation*}
by
\begin{equation*}
f_i^{(M/M_o)}(w)=\widetilde{\mathcal N}_{L/M}\left(f_i^{(L/L_o)}(v)\right),
\end{equation*}
where $v\in U_{\widetilde{E}_i}^{\sigma_i-1}$ is any element satisfying
$\prod_{0\leq\ell\leq f(L/M)-1}(\varphi^{d'})^\ell
\widetilde{N}_{E_i/E_i\cap M}(v)=w
\in U_{\widetilde{E_i\cap M}}^{\sigma_i\mid_{\widetilde{M}}-1}$. 
Note that, if 
$v'\in U_{\widetilde{E}_i}^{\sigma_i-1}$ such that
$\prod_{0\leq\ell\leq f(L/M)-1}(\varphi^{d'})^\ell
\widetilde{N}_{E_i/E_i\cap M}(v')=w$, then
$\widetilde{\mathcal N}_{L/M}\left(f_i^{(L/L_o)}(v)\right)=
\widetilde{\mathcal N}_{L/M}\left(f_i^{(L/L_o)}(v')\right)$. 

In fact, there 
exists $u\in\ker\left(\prod_{0\leq\ell\leq f(L/M)-1}(\varphi^{d'})
^\ell\widetilde{N}_{E_i/E_i\cap M}\right)$ such that
$v'=vu$. Thus, we have to verify that
$\widetilde{\mathcal N}_{L/M}\left(f_i^{(L/L_o)}(v)\right)=
\widetilde{\mathcal N}_{L/M}\left(f_i^{(L/L_o)}(vu)\right)$. That is,
for each $1\leq j\in\mathbb Z$, we have to check that
\begin{multline*}
\prod_{0\leq\ell\leq f(L/M)-1}(\varphi^{d'})^\ell
\widetilde{N}_{E_j/E_j\cap M}\left(\text{Pr}_{\widetilde{E}_j}
(f_i^{(L/L_o)}(v))\right)=\\
\prod_{0\leq\ell\leq f(L/M)-1}(\varphi^{d'})^\ell
\widetilde{N}_{E_j/E_j\cap M}\left(\text{Pr}_{\widetilde{E}_j}
(f_i^{(L/L_o)}(vu))\right).
\end{multline*}
Now, for $j>i$, it follows that
\begin{equation*}
\begin{aligned}
\prod_{0\leq\ell\leq f(L/M)-1}(\varphi^{d'})^\ell
\widetilde{N}_{E_j/E_j\cap M}\left(\text{Pr}_{\widetilde{E}_j}
(f_i^{(L/L_o)}(v))\right)=\\
\prod_{0\leq\ell\leq f(L/M)-1}(\varphi^{d'})^\ell
\widetilde{N}_{E_j/E_j\cap M}\left(g_{j-1}^{(L/L_o)}\circ\cdots
\circ g_i^{(L/L_o)}(v)\right)=\\
g_{j-1}^{(M/M_o)}\circ\cdots\circ g_i^{(M/M_o)}
\left(\prod_{0\leq\ell\leq f(L/M)-1}(\varphi^{d'})^\ell\widetilde{N}
_{E_i/E_i\cap M}(v)\right)=\\
g_{j-1}^{(M/M_o)}\circ\cdots\circ g_i^{(M/M_o)}
\left(\prod_{0\leq\ell\leq f(L/M)-1}(\varphi^{d'})^\ell\widetilde{N}
_{E_i/E_i\cap M}(vu)\right)=\\
\prod_{0\leq\ell\leq f(L/M)-1}(\varphi^{d'})^\ell
\widetilde{N}_{E_j/E_j\cap M}\left(g_{j-1}^{(L/L_o)}\circ\cdots
\circ g_i^{(L/L_o)}(vu)\right)=\\
\prod_{0\leq\ell\leq f(L/M)-1}(\varphi^{d'})^\ell
\widetilde{N}_{E_j/E_j\cap M}\left(\text{Pr}_{\widetilde{E}_j}
(f_i^{(L/L_o)}(vu))\right) .
\end{aligned}
\end{equation*}
Thus, the map 
\begin{equation*}
f_i^{(M/M_o)}: U_{\widetilde{E_i\cap M}}^{\sigma_i\mid_{\widetilde{M}}-1}
\rightarrow U_{\widetilde{\mathbb X}(M/K)}
\end{equation*}
is well-defined. Moreover, for $j>i$,
\begin{equation*}
\text{Pr}_{\widetilde{E_j\cap M}}\circ f_i^{(M/M_o)}=\left(g_{j-1}^{(M/M_o)}
\circ\cdots\circ g_i^{(M/M_o)}\right)\mid_{U_{\widetilde{E_i\cap M}}
^{\sigma_i\mid_{\widetilde{M}}-1}}.
\end{equation*}
In fact, for $w\in U_{\widetilde{E_i\cap M}}^{\sigma_i\mid_{\widetilde{M}}-1}$,
there exists $v\in U_{\widetilde{E}_i}^{\sigma_i-1}$ such that
$\prod_{0\leq\ell\leq f(L/M)-1}(\varphi^{d'})^\ell\widetilde{N}
_{E_i/E_i\cap M}(v)=w$, and 
$f_i^{(M/M_o)}(w)=\widetilde{\mathcal N}_{L/M}\left(f_i^{(L/L_o)}(v)\right)$. 
That is, the following square
\begin{equation}
\label{square-L/M}
\SelectTips{cm}{}\xymatrix{
{U_{\widetilde{E}_i}^{\sigma_i-1}}\ar[r]^{f_i^{(L/L_o)}}
\ar[d]_{\prod_{0\leq\ell\leq f(L/M)-1}(\varphi^{d'})^\ell\widetilde{N}
_{E_i/E_i\cap M}} & 
{U_{\widetilde{\mathbb X}(L/K)}}\ar[d]^{\widetilde{\mathcal N}_{L/M}} \\
{U_{\widetilde{E_i\cap M}}^{\sigma_i\mid_{\widetilde{M}}-1}}
\ar[r]^{f_i^{(M/M_o)}} & {U_{\widetilde{\mathbb X}(M/K)}}
}
\end{equation}
is commutative. Thus,
\begin{equation*}
\begin{aligned}
\text{Pr}_{\widetilde{E_j\cap M}}\circ f_i^{(M/M_o)}(w)&=
\text{Pr}_{\widetilde{E_j\cap M}}\circ\widetilde{\mathcal N}_{L/M}
\left(f_i^{(L/L_o)}(v)\right)\\
&=\prod_{0\leq\ell\leq f(L/M)-1}(\varphi^{d'})^\ell\widetilde{N}
_{E_j/E_j\cap M}\left(\text{Pr}_{\widetilde{E}_j}
\circ f_i^{(L/L_o)}(v)\right)\\
&=\prod_{0\leq\ell\leq f(L/M)-1}(\varphi^{d'})^\ell\widetilde{N}
_{E_j/E_j\cap M}\left((g_{j-1}^{(L/L_o)}\circ\cdots
\circ g_i^{(L/L_o)})(v)\right)\\
&=\left(g_{j-1}^{(M/M_o)}\circ\cdots\circ g_i^{(M/M_o)}\right)
\left(\prod_{0\leq\ell\leq f(L/M)-1}(\varphi^{d'})^\ell\widetilde{N}
_{E_i/E_i\cap M}(v)\right),
\end{aligned}
\end{equation*}
which is the desired equality.

Now, we shall modify Lemma 5.28 of \cite{ikeda-serbest} and show that the
norm map $\widetilde{\mathcal N}_{L/M}:\widetilde{\mathbb X}(L/K)^\times
\rightarrow\widetilde{\mathbb X}(M/K)^\times$ introduced by eq.s 
(\ref{coleman-norm-map}) and (\ref{coleman-norm-map-definition}) satisfies
the following lemma.
\begin{lemma}
\label{Z-Y}
The norm map $\widetilde{\mathcal N}_{L/M}:
\widetilde{\mathbb X}(L/K)^\times\rightarrow\widetilde{\mathbb X}(M/K)^\times$ 
introduced by eq.s (\ref{coleman-norm-map}) and 
(\ref{coleman-norm-map-definition}) further satisfies
\begin{itemize}
\item[(i)]
$\widetilde{\mathcal N}_{L/M}\left(Z_{L/L_0}\left(\{E_i,f_i^{(L/L_o)}\}\right)
\right)\subseteq Z_{M/M_0}\left(\{E_i\cap M,f_i^{(M/M_o)}\}\right)$;
\item[(ii)]
$\widetilde{\mathcal N}_{L/M}\circ\left<\varphi\right>_{L/M}
\left(Y_{L/L_0}\right)\subseteq Y_{M/M_0}$.
\end{itemize}
\end{lemma}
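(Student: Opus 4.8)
The plan is to adapt the proof of Lemma 5.28 of \cite{ikeda-serbest} to the present situation, keeping careful track of the two Frobenius twists that now occur. Recall that $Z_{L/L_0}(\{E_i,f_i^{(L/L_o)}\})$ is the closed subgroup of $U_{\widetilde{\mathbb X}(L/L_0)}^\diamond$ topologically generated by the images $f_i^{(L/L_o)}\bigl(U_{\widetilde{E}_i}^{\sigma_i-1}\bigr)$ over $1\leq i\in\mathbb Z$, and that $Y_{L/L_0}$ is the closed subgroup topologically generated by $U_{\mathbb X(L/L_0)}$ together with $Z_{L/L_0}(\{E_i,f_i^{(L/L_o)}\})$ (cf. \cite{fesenko2005}, \cite{ikeda-serbest}); the analogous descriptions hold for $M/M_0$, now with the data $\{E_i\cap M,f_i^{(M/M_o)}\}$ and $\sigma_i\mid_{\widetilde{M}}$. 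Since $\widetilde{\mathcal N}_{L/M}$ and $\widetilde{\mathcal N}_{L/M}\circ\left<\varphi\right>_{L/M}$ are continuous homomorphisms and $Z_{M/M_0}$, $Y_{M/M_0}$ are closed, in both parts it is enough to verify the asserted inclusion on these topological generators.

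For part (i), fix a generator $f_i^{(L/L_o)}(v)$ with $v\in U_{\widetilde{E}_i}^{\sigma_i-1}$. The commutative square (\ref{square-L/M}) established above yields $\widetilde{\mathcal N}_{L/M}\bigl(f_i^{(L/L_o)}(v)\bigr)=f_i^{(M/M_o)}(w)$, where $w=\prod_{0\leq\ell\leq f(L/M)-1}(\varphi^{d'})^\ell\widetilde{N}_{E_i/E_i\cap M}(v)$ already lies in $U_{\widetilde{E_i\cap M}}^{\sigma_i\mid_{\widetilde{M}}-1}$ — this is precisely the left-hand column of that square, which is well defined because $\widetilde{N}_{E_i/E_i\cap M}$ is Galois-equivariant and $\varphi^{d'}$ commutes with $\sigma_i\mid_{\widetilde{M}}$. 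Hence $\widetilde{\mathcal N}_{L/M}(f_i^{(L/L_o)}(v))=f_i^{(M/M_o)}(w)\in Z_{M/M_0}(\{E_i\cap M,f_i^{(M/M_o)}\})$, and (i) follows.

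For part (ii), on the generators in $U_{\mathbb X(L/L_0)}=U_{\mathbb X(L/K)}$ the inclusion is immediate from property (iii) of $\widetilde{\mathcal N}_{L/M}\circ\left<\varphi\right>_{L/M}$ proved above, the image lying in $U_{\mathbb X(M/K)}=U_{\mathbb X(M/M_0)}\subseteq Y_{M/M_0}$. For a generator $f_i^{(L/L_o)}(v)$ of $Z_{L/L_0}$, observe that $\left<\varphi\right>_{L/M}$ acts coordinatewise as raising to the $\bigl(1+\varphi^{d'}+\cdots+\varphi^{d'(f(L/M)-1)}\bigr)$-th power under the $\varphi$-action; since this power commutes with $\sigma_i$ and with the norm maps entering the definition of $f_i^{(L/L_o)}$, one obtains $\left<\varphi\right>_{L/M}\bigl(f_i^{(L/L_o)}(v)\bigr)=f_i^{(L/L_o)}(v')$ for a suitable $v'\in U_{\widetilde{E}_i}^{\sigma_i-1}$, so that $\left<\varphi\right>_{L/M}(Z_{L/L_0})\subseteq Z_{L/L_0}$; applying part (i) then gives $\widetilde{\mathcal N}_{L/M}\circ\left<\varphi\right>_{L/M}(f_i^{(L/L_o)}(v))\in Z_{M/M_0}\subseteq Y_{M/M_0}$. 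Equivalently, one may combine the square (\ref{square-L/M}) with Remark \ref{norm-norm} directly, so that the Frobenius power coming from $\left<\varphi\right>_{L/M}$ and the operator $\prod_\ell(\varphi^{d'})^\ell\widetilde{N}_{E_i/E_i\cap M}$ defining $\widetilde{\mathcal N}_{L/M}$ combine into the genuine field norm.

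The step I expect to be the main obstacle is the Frobenius bookkeeping underpinning both parts: one must check that $\varphi^{d'}$ commutes with each $\sigma_i$ — so that $\prod_\ell(\varphi^{d'})^\ell\widetilde{N}_{E_i/E_i\cap M}$ carries $(\sigma_i-1)$-powers to $(\sigma_i\mid_{\widetilde{M}}-1)$-powers and thereby forces the image into the $Z$-subgroups — together with the compatibility of the power $1+\varphi^{d'}+\cdots+\varphi^{d'(f(L/M)-1)}$ built into $\left<\varphi\right>_{L/M}$ with the norm maps defining the $f_i$'s and with Remark \ref{norm-norm}. This is exactly the point at which the present $d>1$ setting departs from Fesenko's original $d=1$ theory, in which $\left<\varphi\right>_{L/M}$ is the identity.
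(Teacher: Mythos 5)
Your part (i) is essentially the paper's own argument: an element of $Z_{L/L_0}\left(\{E_i,f_i^{(L/L_o)}\}\right)$ is a convergent product $\prod_i z^{(i)}$ with $z^{(i)}\in\text{im}(f_i^{(L/L_o)})$, and the continuity of $\widetilde{\mathcal N}_{L/M}$ together with the commutative square (\ref{square-L/M}) sends each factor into $\text{im}(f_i^{(M/M_o)})$, hence the product into $Z_{M/M_0}$. No issue there.

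Part (ii), however, rests on a mischaracterization of $Y_{L/L_0}$. You treat $Y_{L/L_0}$ as the closed subgroup topologically generated by $U_{\mathbb X(L/L_0)}$ and $Z_{L/L_0}$ and verify the inclusion only on these generators. But $Y_{L/L_0}$ is not defined that way: in \cite{fesenko2005} and \cite{ikeda-serbest} it is the subgroup of those $y\in U^\diamond_{\widetilde{\mathbb X}(L/L_0)}$ whose twist $y^{1-\varphi^d}$ lies in $Z_{L/L_0}$, which is in general larger than (the closure of) $U_{\mathbb X(L/L_0)}\cdot Z_{L/L_0}$; the paper's own proof of (ii) uses exactly this twist condition, both to land in $Z$ on the source side and to certify membership in $Y_{M/M_0}$ on the target side. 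So your argument covers only a subgroup of $Y_{L/L_0}$ and says nothing about a general $y$. Moreover, the intermediate claim $\left<\varphi\right>_{L/M}(Z_{L/L_0})\subseteq Z_{L/L_0}$, via $\left<\varphi\right>_{L/M}\bigl(f_i^{(L/L_o)}(v)\bigr)=f_i^{(L/L_o)}(v')$, is asserted rather than proved (it needs the compatibility of $f_i^{(L/L_o)}$ and of the exponent $1+\varphi^{d'}+\cdots+\varphi^{d'(f(L/M)-1)}$ with the $\sigma_i$-twists, which you do not establish), and it is not needed. The correct and much shorter route is the paper's: for $y\in Y_{L/L_0}$ one has $y^{1-\varphi^d}\in Z_{L/L_0}$, hence by part (i) $\widetilde{\mathcal N}_{L/M}(y^{1-\varphi^d})\in Z_{M/M_0}$; since
\begin{equation*}
\widetilde{\mathcal N}_{L/M}(y^{1-\varphi^d})=\left(\widetilde{\mathcal N}_{L/M}\circ\left<\varphi\right>_{L/M}(y)\right)^{1-\varphi^{d'}},
\end{equation*}
and since $\widetilde{\mathcal N}_{L/M}\circ\left<\varphi\right>_{L/M}(y)$ lies in $U^\diamond_{\widetilde{\mathbb X}(M/K)}$ by property (ii) of the map (\ref{coleman-norm-0}), the defining condition of $Y_{M/M_0}$ gives $\widetilde{\mathcal N}_{L/M}\circ\left<\varphi\right>_{L/M}(y)\in Y_{M/M_0}$ directly, with no appeal to generators of $Y_{L/L_0}$ at all.
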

\begin{proof}
\begin{itemize}
\item[(i)]
Recall that, $\widetilde{\mathcal N}_{L/M}:
\widetilde{\mathbb X}(L/K)^\times\rightarrow\widetilde{\mathbb X}(M/K)^\times$ 
is a continuous mapping. Now, for any choice of 
$z^{(i)}\in\text{im}(f_i^{(L/L_o)})$, the continuity of the multiplicative 
arrow $\widetilde{\mathcal N}_{L/M}:
\widetilde{\mathbb X}(L/K)^\times\rightarrow\widetilde{\mathbb X}(M/K)^\times$ 
yields
\begin{equation*}
\widetilde{\mathcal N}_{L/M}\left(\prod_iz^{(i)}\right)=\prod_i\widetilde
{\mathcal N}_{L/M}(z^{(i)}),
\end{equation*}
where $\widetilde{\mathcal N}_{L/M}(z^{(i)})\in\text{im}(f_i^{(M/M_o)})$
by the commutative square (\ref{square-L/M}).
\item[(ii)]
For $y\in Y_{L/L_0}$, as $y^{1-\varphi^d}\in Z_{L/L_0}$, it follows that 
$\widetilde{\mathcal N}_{L/M}(y^{1-\varphi^d})\in Z_{M/M_0}$ by part (i).
Now, note that
\begin{equation*}
\widetilde{\mathcal N}_{L/M}(y^{1-\varphi^d})=\widetilde{\mathcal N}_{L/M}(y)^
{1-\varphi^d}=\left(\widetilde{\mathcal N}_{L/M}(y)
^{1+\varphi^{d'}+\cdots\varphi^{d'(f(L/M)-1)}}\right)^{1-\varphi^{d'}}.
\end{equation*} 
Therefore,
\begin{equation*}
\widetilde{\mathcal N}_{L/M}(y)^{1+\varphi^{d'}+\cdots\varphi^{d'(f(L/M)-1)}}
=\widetilde{\mathcal N}_{L/M}\circ\left<\varphi\right>_{L/M}(y)\in Y_{M/M_0}
\end{equation*}
as desired.
\end{itemize}
The proof is now complete.
\end{proof}
Thus, by part (ii) of Lemma \ref{Z-Y}, the homomorphism
$\widetilde{\mathcal N}_{L/M}\circ\left<\varphi\right>_{L/M} :
\widetilde{\mathbb X}(L/K)^\times\rightarrow\widetilde{\mathbb X}(M/K)^\times$
induces a group homomorphism, which will again be called the \textit{Coleman
norm map from $L$ to $M$},
\begin{equation}
\label{coleman-norm-2}
\widetilde{\mathcal N}_{L/M}^{\text{Coleman}}:
U^\diamond_{\widetilde{\mathbb X}(L/K)}/Y_{L/L_0}\rightarrow
U^\diamond_{\widetilde{\mathbb X}(M/K)}/Y_{M/M_0}
\end{equation}
and defined by
\begin{equation}
\label{coleman-norm-2-def}
\widetilde{\mathcal N}_{L/M}^{\text{Coleman}}\left(\overline{U}\right)=
\widetilde{\mathcal N}_{L/M}\circ\left<\varphi\right>_{L/M}\left(U\right).
Y_{M/M_0},
\end{equation}
for every $U\in U^\diamond_{\widetilde{\mathbb X}(L/K)}$, where
$\overline{U}$ denotes, as usual, the coset $U.Y_{L/L_0}$ in
$U^\diamond_{\widetilde{\mathbb X}(L/K)}/Y_{L/L_0}$.

The following lemma is the finer version of Lemma \ref{square1-preliminary}.
\begin{lemma}
\label{square3-preliminary}
Let $K$ be a local field satisfying the condition given in eq. 
(\ref{rootofunity}).
For an infinite Galois sub-extension $M/K$ of $L/K$ such that the 
residue-class degree $[\kappa_M:\kappa_K]=d'$ and 
$K\subset M\subset K_{\varphi^{d'}}$ for some $d'\mid d$,
the square
\begin{equation}
\label{square-L/M-Y}
\SelectTips{cm}{}\xymatrix{
{\text{Gal}(L/L_0)}\ar[r]^-{\Phi_{L/L_0}^{(\varphi^d)}}
\ar[d]_{\text{res}_M} & 
{U_{\widetilde{\mathbb X}{(L/L_0)}}^\diamond/Y_{L/L_0}}
\ar[d]^{\widetilde{\mathcal N}_{L/M}^{\text{Coleman}}} \\
{\text{Gal}(M/M_0)}\ar[r]^-{\Phi_{M/M_0}^{(\varphi^{d'})}} & 
{U_{\widetilde{\mathbb X}{(M/M_0)}}^\diamond/Y_{M/M_0}},
}
\end{equation}
where the right-vertical arrow is the Coleman norm map 
$\widetilde{\mathcal N}_{L/M}^{\text{Coleman}}$ 
from $L$ to $M$ defined by eq.s 
(\ref{coleman-norm-2}) and (\ref{coleman-norm-2-def}), is commutative.
\end{lemma}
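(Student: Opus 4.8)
The plan is to derive this finer square from the coarser one of Lemma \ref{square1-preliminary} by pushing everything down along the canonical projections to the $Y$-quotients. Recall, as noted just before Theorem \ref{cocycle2}, that $\Phi_{L/L_0}^{(\varphi^d)}=c_{L/L_0}\circ\phi_{L/L_0}^{(\varphi^d)}$ and $\Phi_{M/M_0}^{(\varphi^{d'})}=c_{M/M_0}\circ\phi_{M/M_0}^{(\varphi^{d'})}$, where $c_{L/L_0}:U_{\widetilde{\mathbb X}(L/K)}^\diamond/U_{\mathbb X(L/K)}\rightarrow U_{\widetilde{\mathbb X}(L/K)}^\diamond/Y_{L/L_0}$ is the canonical surjection coming from the inclusion $U_{\mathbb X(L/K)}\subseteq Y_{L/L_0}$, and likewise for $c_{M/M_0}$ (recall $U_{\mathbb X(M/K)}\subseteq Y_{M/M_0}$). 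The first step is therefore to assemble the prism whose two triangular faces are $\Phi_{L/L_0}^{(\varphi^d)}=c_{L/L_0}\circ\phi_{L/L_0}^{(\varphi^d)}$ and $\Phi_{M/M_0}^{(\varphi^{d'})}=c_{M/M_0}\circ\phi_{M/M_0}^{(\varphi^{d'})}$, whose back face is the commutative square of Lemma \ref{square1-preliminary}, and whose front face is the square with horizontal arrows the two incarnations of the Coleman norm map and vertical arrows $c_{L/L_0}$, $c_{M/M_0}$.

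The key intermediate observation I would record is the commutativity of that front face: the Coleman norm map $\widetilde{\mathcal N}_{L/M}^{\text{Coleman}}$ of eq.s (\ref{coleman-norm-1}), (\ref{coleman-norm-1-def}) on $U_{\mathbb X}$-quotients and the Coleman norm map $\widetilde{\mathcal N}_{L/M}^{\text{Coleman}}$ of eq.s (\ref{coleman-norm-2}), (\ref{coleman-norm-2-def}) on $Y$-quotients are \emph{both} induced by one and the same homomorphism $\widetilde{\mathcal N}_{L/M}\circ\left<\varphi\right>_{L/M}$ on $\widetilde{\mathbb X}(L/K)^\times$; since $U_{\mathbb X(L/K)}\subseteq Y_{L/L_0}$ and $U_{\mathbb X(M/K)}\subseteq Y_{M/M_0}$, and since $\widetilde{\mathcal N}_{L/M}\circ\left<\varphi\right>_{L/M}$ carries $Y_{L/L_0}$ into $Y_{M/M_0}$ by Lemma \ref{Z-Y}(ii), both passages to the quotient are legitimate and are intertwined by $c_{L/L_0}$, $c_{M/M_0}$. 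In other words, $c_{M/M_0}\circ\widetilde{\mathcal N}_{L/M}^{\text{Coleman}}=\widetilde{\mathcal N}_{L/M}^{\text{Coleman}}\circ c_{L/L_0}$.

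The proof is then a diagram chase. Let $\sigma\in\text{Gal}(L/L_0)$; then $\text{res}_M(\sigma)=\sigma\mid_M\in\text{Gal}(M/M_0)$, since $L_0\cap M=L\cap K^{nr}\cap M=M\cap K^{nr}=M_0$. One computes
\begin{align*}
\Phi_{M/M_0}^{(\varphi^{d'})}(\text{res}_M\sigma)
&= c_{M/M_0}\bigl(\phi_{M/M_0}^{(\varphi^{d'})}(\text{res}_M\sigma)\bigr) \\
&= c_{M/M_0}\bigl(\widetilde{\mathcal N}_{L/M}^{\text{Coleman}}(\phi_{L/L_0}^{(\varphi^d)}(\sigma))\bigr) \\
&= \widetilde{\mathcal N}_{L/M}^{\text{Coleman}}\bigl(c_{L/L_0}(\phi_{L/L_0}^{(\varphi^d)}(\sigma))\bigr) \\
&= \widetilde{\mathcal N}_{L/M}^{\text{Coleman}}\bigl(\Phi_{L/L_0}^{(\varphi^d)}(\sigma)\bigr),
\end{align*}
where the first and last equalities are the definitions of $\Phi$ in terms of $\phi$, the second is Lemma \ref{square1-preliminary}, and the third is the compatibility recorded in the previous paragraph. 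This is precisely the asserted commutativity of the square (\ref{square-L/M-Y}).

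I do not expect a genuine obstacle: the substantive content has already been front-loaded into Lemma \ref{square1-preliminary} and Lemma \ref{Z-Y}(ii). The only point deserving explicit care — and the step I would write out in full — is the mutual compatibility of the two versions of the Coleman norm map with the projections $c_{L/L_0}$, $c_{M/M_0}$, i.e. that one may pass from the $U_{\mathbb X}$-quotients to the $Y$-quotients coherently on both sides at once; this is exactly where the inclusions $U_{\mathbb X}\subseteq Y$ together with $\widetilde{\mathcal N}_{L/M}\circ\left<\varphi\right>_{L/M}(Y_{L/L_0})\subseteq Y_{M/M_0}$ from Lemma \ref{Z-Y}(ii) are invoked.
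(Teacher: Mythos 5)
Your proposal is correct and follows essentially the same route as the paper: the paper also writes $\Phi=c\circ\phi$, observes that the square intertwining the two Coleman norm maps via the canonical projections $c_{L/L_0}$, $c_{M/M_0}$ commutes (which it dismisses as obvious, whereas you justify it via Lemma \ref{Z-Y}(ii) and the inclusions $U_{\mathbb X}\subseteq Y$), and then pastes this square with the coarse square of Lemma \ref{square1-preliminary}. Your write-up is just a slightly more explicit version of the paper's pasting argument.
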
 
\begin{proof}
It suffices to prove that the square
\begin{equation*}
\SelectTips{cm}{}\xymatrix{
{U_{\widetilde{\mathbb X}(L/K)}^\diamond/U_{\mathbb X(L/K)}}
\ar[r]^{c_{L/L_0}}\ar[d]_{\widetilde{\mathcal N}^{Coleman}_{L/M}} & 
{U_{\widetilde{\mathbb X}(L/K)}^\diamond/Y_{L/L_0}}
\ar[d]^{\widetilde{\mathcal N}^{Coleman}_{L/M}} \\
{U_{\widetilde{\mathbb X}(M/K)}^\diamond/U_{\mathbb X(M/K)}}
\ar[r]^{c_{M/M_0}} & 
{U_{\widetilde{\mathbb X}(M/K)}^\diamond/Y_{M/M_0}}
}
\end{equation*}
is commutative, which is obvious. Then pasting this square with the 
square eq. (\ref{square-L/M-U}) as
\begin{equation*}
\SelectTips{cm}{}\xymatrix{
{Gal(L/K)}\ar[r]^-{\phi_{L/L_0}^{(\varphi^d)}}\ar[d]_{\text{res}_M} & 
{U_{\widetilde{\mathbb X}(L/K)}^\diamond/U_{\mathbb X(L/K)}}
\ar[r]^{c_{L/L_0}}\ar[d]^{\widetilde{\mathcal N}^{Coleman}_{L/M}} & 
{U_{\widetilde{\mathbb X}(L/K)}^\diamond/Y_{L/K}}
\ar[d]^{\widetilde{\mathcal N}^{Coleman}_{L/M}} \\
{Gal(M/K)}\ar[r]^-{\phi_{M/M_0}^{(\varphi^{d'})}} & 
{U_{\widetilde{\mathbb X}(M/K)}^\diamond/U_{\mathbb X(M/K)}}
\ar[r]^{c_{M/M_0}} & 
{U_{\widetilde{\mathbb X}(M/K)}^\diamond/Y_{M/K}}
}
\end{equation*}
the commutativity of the square eq. (\ref{square-L/M-Y}) follows. 
\end{proof}
Thus, we have the following theorem, which is the finer version of Theorem
\ref{square1}.
\begin{theorem}
\label{square3}
Let $K$ be a local field satisfying the condition given in eq. 
(\ref{rootofunity}).
For an infinite Galois sub-extension $M/K$ of $L/K$ such that the 
residue-class degree $[\kappa_M:\kappa_K]=d'$ and 
$K\subset M\subset K_{\varphi^{d'}}$ for some $d'\mid d$,
the square
\begin{equation*}
\SelectTips{cm}{}\xymatrix{
{\text{Gal}(L/K)}\ar[r]^-{\pmb{\Phi}_{L/K}^{(\varphi)}}\ar[d]_{\text{res}_M} & 
{K^\times/N_{L_0/K}L_0^\times
\times
U_{\widetilde{\mathbb X}{(L/K)}}^\diamond
/Y_{L/L_0}}\ar[d]^{\left(e_{L_0/M_0}^{\text{CFT}},
\widetilde{\mathcal N}_{L/M}^{\text{Coleman}}\right)} \\
{\text{Gal}(M/K)}\ar[r]^-{\pmb{\Phi}_{M/K}^{(\varphi)}} & 
{K^\times/N_{M_0/K}M_0^\times
\times
U_{\widetilde{\mathbb X}{(M/K)}}^\diamond
/Y_{M/M_0}},
}
\end{equation*}
where the right-vertical arrow
\begin{equation*}
K^\times/N_{L_0/K}L_0^\times
\times U_{\widetilde{\mathbb X}{(L/K)}}^\diamond/Y_{L/L_0}
\xrightarrow{\left(e_{L_0/M_0}^{\text{CFT}},\widetilde{\mathcal N}_{L/M}
^{\text{Coleman}}\right)} 
K^\times/N_{M_0/K}M_0^\times
\times U_{\widetilde{\mathbb X}{(M/K)}}^\diamond/Y_{M/M_0}
\end{equation*}
defined by
\begin{equation*}
\left(e_{L_0/M_0}^{\text{CFT}},\widetilde{\mathcal N}_{L/M}
^{\text{Coleman}}\right) :
(\overline{a},\overline{U})\mapsto\left(e_{L_0/M_0}^{\text{CFT}}(\overline{a}),
\widetilde{\mathcal N}_{L/M}^{\text{Coleman}}(\overline{U})\right)
\end{equation*}
for every $(\overline{a},\overline{U})\in K^\times/N_{L_0/K}L_0^\times\times
U_{\widetilde{\mathbb X}(L/K)}^\diamond/Y_{L/L_0}$, is commutative.
Here, 
\begin{equation*}
e_{L_0/M_0}^{\text{CFT}}:K^\times/N_{L_0/K}L_0^\times\rightarrow
K^\times/N_{M_0/K}M_0^\times
\end{equation*} 
is the natural inclusion defined via the existence theorem of local 
class field theory.
\end{theorem}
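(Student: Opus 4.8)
The plan is to deduce Theorem~\ref{square3} from Theorem~\ref{square1} by gluing on a trivially commutative square, mirroring the way $\pmb{\Phi}_{L/K}^{(\varphi)}$ is obtained from $\pmb{\phi}_{L/K}^{(\varphi)}$. Recall that, by construction, $\pmb{\Phi}_{L/K}^{(\varphi)}=\left(\text{id}_{K^\times/N_{L_0/K}L_0^\times},c_{L/L_0}\right)\circ\pmb{\phi}_{L/K}^{(\varphi)}$ and, similarly, $\pmb{\Phi}_{M/K}^{(\varphi)}=\left(\text{id}_{K^\times/N_{M_0/K}M_0^\times},c_{M/M_0}\right)\circ\pmb{\phi}_{M/K}^{(\varphi)}$, where $c_{L/L_0}$ and $c_{M/M_0}$ are the canonical surjections induced by the inclusions $U_{\mathbb X(L/K)}\subseteq Y_{L/L_0}$ and $U_{\mathbb X(M/K)}\subseteq Y_{M/M_0}$. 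Hence it is enough to check that the auxiliary square whose horizontal arrows are $\left(\text{id},c_{L/L_0}\right)$ and $\left(\text{id},c_{M/M_0}\right)$ and whose vertical arrows are $\left(e_{L_0/M_0}^{\text{CFT}},\widetilde{\mathcal N}_{L/M}^{\text{Coleman}}\right)$ (the $/U_{\mathbb X}$-version on the left, the $/Y$-version on the right) commutes; since $\pmb{\Phi}_{L/K}^{(\varphi)}$ and $\pmb{\Phi}_{M/K}^{(\varphi)}$ are the composites just described, juxtaposing the commutative square of Theorem~\ref{square1} with this auxiliary square, glued along the common column of the $U_{\mathbb X}$-quotients, produces precisely the commutative square of Theorem~\ref{square3}.

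First I would verify the commutativity of the auxiliary square. On the first coordinate it is $e_{L_0/M_0}^{\text{CFT}}$ followed by the identity on both routes, so there is nothing to check. On the second coordinate, both the map $\widetilde{\mathcal N}_{L/M}^{\text{Coleman}}:U_{\widetilde{\mathbb X}(L/K)}^\diamond/U_{\mathbb X(L/K)}\rightarrow U_{\widetilde{\mathbb X}(M/K)}^\diamond/U_{\mathbb X(M/K)}$ of eq.s (\ref{coleman-norm-1})--(\ref{coleman-norm-1-def}) and the map $\widetilde{\mathcal N}_{L/M}^{\text{Coleman}}:U_{\widetilde{\mathbb X}(L/K)}^\diamond/Y_{L/L_0}\rightarrow U_{\widetilde{\mathbb X}(M/K)}^\diamond/Y_{M/M_0}$ of eq.s (\ref{coleman-norm-2})--(\ref{coleman-norm-2-def}) are induced by one and the same homomorphism $\widetilde{\mathcal N}_{L/M}\circ\left<\varphi\right>_{L/M}$ on $U_{\widetilde{\mathbb X}(L/K)}^\diamond$; the identity $c_{M/M_0}\circ\widetilde{\mathcal N}_{L/M}^{\text{Coleman}}=\widetilde{\mathcal N}_{L/M}^{\text{Coleman}}\circ c_{L/L_0}$ on cosets is then immediate, while the well-definedness of the two Coleman norm maps rests precisely on property (iii) stated after eq. (\ref{coleman-norm-0}), namely $\widetilde{\mathcal N}_{L/M}\circ\left<\varphi\right>_{L/M}\left(U_{\mathbb X(L/K)}\right)\subseteq U_{\mathbb X(M/K)}$, and on part (ii) of Lemma~\ref{Z-Y}, namely $\widetilde{\mathcal N}_{L/M}\circ\left<\varphi\right>_{L/M}\left(Y_{L/L_0}\right)\subseteq Y_{M/M_0}$. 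This is exactly the reasoning already used inside the proof of Lemma~\ref{square3-preliminary}.

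Alternatively, and equivalently, I would argue by hand: for $\sigma\in\text{Gal}(L/K)$ choose $0\leq m\in\mathbb Z$ with $\sigma\mid_{L_0}=\varphi^m\mid_{L_0}$, so that $\varphi^{-m}\sigma\in\text{Gal}(L/L_0)$ and $\pmb{\Phi}_{L/K}^{(\varphi)}(\sigma)=\left(\pi_K^m N_{L_0/K}L_0^\times,\Phi_{L/L_0}^{(\varphi^d)}(\varphi^{-m}\sigma)\right)$; applying the right vertical arrow, the first coordinate becomes, by the existence theorem of abelian local class field theory, $\pi_K^m N_{M_0/K}M_0^\times=\pi_K^{m'}N_{M_0/K}M_0^\times$, where $0\leq m'\in\mathbb Z$ is the integer with $\sigma\mid_{M_0}=\varphi^{m'}\mid_{M_0}$, while the second coordinate becomes $\Phi_{M/M_0}^{(\varphi^{d'})}\left((\varphi^{-m}\sigma)\mid_M\right)$ by Lemma~\ref{square3-preliminary}; since $(\varphi^{-m}\sigma)\mid_M=\varphi^{-m'}(\sigma\mid_M)$ by Remark~\ref{directproduct-diagram}(i), the image equals $\pmb{\Phi}_{M/K}^{(\varphi)}(\sigma\mid_M)=\pmb{\Phi}_{M/K}^{(\varphi)}(\text{res}_M\sigma)$, as required.

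I expect no genuinely new obstacle: the substantive content is already contained in Lemma~\ref{square3-preliminary} (and, underneath it, in part~(ii) of Lemma~\ref{Z-Y}) together with the compatibility of the direct-product decomposition of eq.s (\ref{directproduct})--(\ref{directproduct-definition}) with restriction recorded in Remark~\ref{directproduct-diagram}(i). The only place demanding a little care is the bookkeeping of the integers $m$ and $m'$: one has to note that $d'\mid m-m'$, equivalently $\varphi^m\mid_{M_0}=\varphi^{m'}\mid_{M_0}$, so that $\pi_K^{m-m'}\in\langle\pi_K^{d'}\rangle\subseteq N_{M_0/K}M_0^\times$ (using that $M_0/K$ is unramified of degree $d'$) and the two representatives of the first coordinate agree in $K^\times/N_{M_0/K}M_0^\times$; this is exactly the content of Remark~\ref{directproduct-diagram}(i). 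Everything else is routine diagram-chasing.
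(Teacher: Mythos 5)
Your proposal is correct and matches the paper's argument: your ``by hand'' computation (decompose $\sigma$ as $\varphi^m$ times an element of $\text{Gal}(L/L_0)$, apply Lemma~\ref{square3-preliminary} to the second coordinate, the existence theorem to the first, and Remark~\ref{directproduct-diagram}(i) to reconcile $m$ and $m'$) is precisely the proof given in the paper. Your first, square-gluing variant is only a cosmetic reorganization: the paper performs the same pasting of the $c_{L/L_0}$/$c_{M/M_0}$ square with the square of Lemma~\ref{square1-preliminary} inside the proof of Lemma~\ref{square3-preliminary}, whereas you perform it at the level of the full product after Theorem~\ref{square1}, with the same inputs (property (iii) after eq.~(\ref{coleman-norm-0}) and Lemma~\ref{Z-Y}(ii)) guaranteeing well-definedness.
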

\begin{proof}
By the isomorphism defined by eq.s (\ref{directproduct}) and 
(\ref{directproduct-definition}), for $\sigma\in\text{Gal}(L/K)$, there
exists a unique $0\leq m\in\mathbb Z$ such that $\sigma\mid_{L_0}=\varphi^m$
and $\varphi^{-m}\sigma\in\text{Gal}(L/L_0)$. Now, following the definition,
\begin{equation*}
\pmb{\Phi}_{L/K}^{(\varphi)}(\sigma) = \left(\pi_K^m N_{L_0/K}L_0^\times ,
\Phi_{L/L_0}^{(\varphi^d)}(\varphi^{-m}\sigma)\right).
\end{equation*}
Thus,
\begin{equation*}
\begin{aligned}
\left(e_{L_0/M_0}^{\text{CFT}},
\widetilde{\mathcal N}_{L/M}^{\text{Coleman}}\right)
\left(\pi_K^m N_{L_0/K}L_0^\times , 
\Phi_{L/L_0}^{(\varphi^d)}(\varphi^{-m}\sigma)\right)  =\\ 
\left(e_{L_0/M_0}^{\text{CFT}}(\pi_K^m N_{L_0/K}L_0^\times),
\widetilde{\mathcal N}_{L/M}^{\text{Coleman}}(\Phi_{L/L_0}
^{(\varphi^d)}(\varphi^{-m}\sigma))\right)  =\\
\left(\pi_K^m N_{M_0/K}M_0^\times,\Phi_{M/M_0}^{(\varphi^{d'})}
(\varphi^{-m}\sigma\mid_{M})\right) 
\end{aligned}
\end{equation*}
by Lemma \ref{square3-preliminary}.
Note that, by the existence theorem of local class field theory,
\begin{equation*}
e_{L_0/M_0}^{\text{CFT}}(\pi_K^m N_{L_0/K}L_0^\times)
=\pi_K^mN_{M_0/K}M_0^\times=\pi_K^{m'}N_{M_0/K}M_0^\times ,
\end{equation*}
where $0\leq m'\in\mathbb Z$ is the unique integer satisfying
$(\sigma\mid_M)\mid_{M_0}=\sigma\mid_{M_0}=\varphi^{m'}$ and 
$\varphi^{-m'}(\sigma\mid_{M})\in\text{Gal}(M/M_0)$.
Hence,
\begin{equation*}
\begin{aligned}
\left(e_{L_0/M_0}^{\text{CFT}},
\widetilde{\mathcal N}_{L/M}^{\text{Coleman}}\right)
(\pmb{\Phi}_{L/K}^{(\varphi)}(\sigma)) &= 
\left(\pi_K^{m'}N_{M_0/K}M_0^\times ,\Phi_{M/M_0}^{(\varphi^{d'})}
(\varphi^{-m}\sigma\mid_{M})\right)\\
& = \left(\pi_K^{m'}N_{M_0/K}M_0^\times ,\Phi_{M/M_0}^{(\varphi^{d'})}
(\varphi^{-m'}(\sigma\mid_{M}))\right)\\
&=\pmb{\Phi}_{M/K}^{(\varphi)}(\text{res}_M(\sigma))
\end{aligned}
\end{equation*}
by Remark \ref{directproduct-diagram} part (i), which completes the proof.
\end{proof}
Let $K$ be a local field satisfying the condition given in eq. 
(\ref{rootofunity}).
Now, let $F/K$ be a finite sub-extension of $L/K$. Thus, $L/F$ is an infinite
$APF$-Galois extension (cf. Lemma 3.3 of \cite{ikeda-serbest}), where
$F$ satisfies (\ref{rootofunity}). Fix a 
Lubin-Tate splitting $\varphi_F$ over $F$. Now, assume that the residue-class
degree $[\kappa_L : \kappa_F]=d'$, for some $d'\mid d$, 
and there exists the chain of field extensions 
\begin{equation*}
F\subset L\subset F_{(\varphi_F)^{d'}}.
\end{equation*}
Thus, there exists the generalized Fesenko reciprocity map
\begin{equation*}
\pmb{\Phi}_{L/F}^{(\varphi_F)}:\text{Gal}(L/F)\rightarrow
F^\times/N_{L_0^{(F)}/F}{L_0^{(F)}}^\times
\times
U_{\widetilde{\mathbb X}{(L/F)}}^\diamond/Y_{L/L_0^{(F)}}
\end{equation*}
corresponding to the extension $L/F$. Here, $L_0^{(F)}$ is defined as
usual by $L_0^{(F)}=L\cap F^{nr}=F^{nr}_{d'}$ (and recall that, $L_0^{(K)}=
L\cap K^{nr}=K^{nr}_d$).

Now, fix a basic sequence 
\begin{equation*}
L_0^{(K)}=E_o\subset E_1\subset\cdots\subset E_i\subset\cdots\subset L
\end{equation*}
for the extension $L/L_0^{(K)}$. Following the notation of \cite{fesenko2005}
and \cite{ikeda-serbest}, introduce for each $1\leq i\in\mathbb Z$,
an element $\sigma_i$ in $\text{Gal}(\widetilde{L}/\widetilde{K})$ such
that $\left<\sigma\mid_{E_i}\right>=\text{Gal}(E_i/E_{i-1})$. 
Now, fix the sequence
\begin{equation*}
L_0^{(F)}=E_oL_0^{(F)}\subseteq E_1L_0^{(F)}\subseteq\cdots\subseteq 
E_iL_0^{(F)}\subseteq\cdots\subseteq LL_0^{(F)}=L
\end{equation*}
for the extension $L/L_0^{(F)}$ following eq. (5.55) of \cite{ikeda-serbest}. 
Now, for $1\leq i\in\mathbb Z$, introduce the elements $\sigma_i^*$
in $\text{Gal}(\widetilde{L}/\widetilde{F})$ that satisfies
\begin{equation*} 
<\sigma_i^*\mid_{E_iL_0^{(F)}}>=\text{Gal}(E_iL_0^{(F)}/E_{i-1}L_0^{(F)})
\end{equation*}
as follows :
\begin{itemize}
\item[(i)] in case $i>i_o$, then define $\sigma_i^*=\sigma_i$ ; 
\item[(ii)] in case $i\leq i_o$, then define 
\begin{equation*}
\sigma_i^*=
\begin{cases}
\sigma_i, & E_{i-1}L_0^{(F)}\subset E_{i}L_0^{(F)} ; \\
\text{id}_{E_iL_0^{(F)}}, & E_{i-1}L_0^{(F)}=E_{i}L_0^{(F)} ;
\end{cases}
\end{equation*}
\end{itemize}
where $i_o$ is defined as in the paragarph of eq. (5.55) of 
\cite{ikeda-serbest}. It is then clear that, for each $1\leq i\in\mathbb Z$, 
the elements 
$\sigma_i^*$ of $\text{Gal}(\widetilde{L}/\widetilde{F})$
satisfies
\begin{equation*}
<\sigma_i^*\mid_{E_iL_0^{(F)}}>=\text{Gal}(E_iL_0^{(F)}/E_{i-1}L_o^{(F)}),
\end{equation*}
and for almost all $i$, $\sigma_i^*=\sigma_i$.
Further, for each $1\leq k, i\in\mathbb Z$,
introduce the map $h_k^{(L/L_0^{(F)})}:\prod_{1\leq i\leq k}U^{\sigma^*_i-1}
_{\widetilde{E_kL_0^{(F)}}}\rightarrow\left(\prod_{1\leq i\leq k+1}
U^{\sigma_i^*-1}_{\widetilde{E_{k+1}L_0^{(F)}}}\right)/U^{\sigma^*_{k+1}-1}
_{\widetilde{E_{k+1}L_0^{(F)}}}$, the map
$g_k^{(L/L_0^{(F)})}:\prod_{1\leq i\leq k}U_{\widetilde{E_kL_0^{(F)}}}
^{\sigma^*_i-1}\rightarrow\prod_{1\leq i\leq k+1}U_{\widetilde{E_{k+1}
L_0^{(F)}}}^{\sigma_i^*-1}$ and the map
$f_i^{(L/L_0^{(F)})}:U_{\widetilde{E_iL_0^{(F)}}}^{\sigma^*_i-1}\rightarrow 
U_{\widetilde{\mathbb X}(L/E_iL_0^{(F)})}
\xrightarrow{\Lambda_{E_iL_0^{(F)}/L_0^{(F)}}} 
U_{\widetilde{\mathbb X}(L/F)}$
following \cite{fesenko2005} and \cite{ikeda-serbest}.
Define, for each $1\leq k\in\mathbb Z$, a homomorphism
\begin{equation*}
h_k^{(L/L_0^{(K)})}:\prod_{1\leq i\leq k}U^{\sigma_i-1}
_{\widetilde{E}_k}\rightarrow\left(\prod_{1\leq i\leq k+1}
U^{\sigma_i-1}_{\widetilde{E}_{k+1}}\right)
/U^{\sigma_{k+1}-1}_{\widetilde{E}_{k+1}}
\end{equation*}
that satisfies
\begin{equation*}
\widetilde{N}^*_{E_{k+1}L_0^{(F)}/E_{k+1}}\circ 
h_k^{(L/L_0^{(F)})}=h_k^{(L/L_0^{(K)})}\circ\widetilde{N}_{E_kL_0^{(F)}/E_k}
\end{equation*}
and any map
\begin{equation*}
g_k^{(L/L_0^{(K)})}:
\prod_{1\leq i\leq k}U^{\sigma_i-1}
_{\widetilde{E}_k}\rightarrow\prod_{1\leq i\leq k+1}
U^{\sigma_i-1}_{\widetilde{E}_{k+1}}
\end{equation*}
that satisfies
\begin{equation*}
\widetilde{N}_{E_{k+1}L_0^{(F)}/E_{k+1}}\circ g_k^{(L/L_0^{(F)})}=
g_k^{(L/L_0^{(K)})}\circ\widetilde{N}_{E_kL_0^{(F)}/E_k}
\end{equation*}
following the same lines of \cite{fesenko2005} and \cite{ikeda-serbest}. 

Now, for each $1\leq i\in\mathbb Z$, introduce the map
\begin{equation*}
f_i^{(L/L_0^{(K)})}: U_{\widetilde{E}_i}^{\sigma_i-1}
\rightarrow U_{\widetilde{\mathbb X}(L/K)}
\end{equation*}
by
\begin{equation*}
f_i^{(L/L_0^{(K)})}(w)
=\Lambda_{F/K}\left(f_i^{(L/L_0^{(F)})}(v)\right),
\end{equation*}
where $v\in U_{\widetilde{E_iL_0^{(F)}}}^{\sigma^*_i-1}$ is any element 
satisfying
$\widetilde{N}_{E_iL_0^{(F)}/E_i}(v)=w\in U_{\widetilde{E}_i}^{\sigma_i-1}$. 
Note that, if 
$v'\in U_{\widetilde{E_iL_0^{(F)}}}^{\sigma^*_i-1}$ such that
$\widetilde{N}_{E_iL_0^{(F)}/E_i}(v')=w$, then
$\Lambda_{F/K}\left(f_i^{(L/L_0^{(F)})}(v)\right)=
\Lambda_{F/K}\left(f_i^{(L/L_0^{(F)})}(v')\right)$. 

In fact, there exists $u\in\ker\left(\widetilde{N}_{E_iL_0^{(F)}/E_i}\right)$ 
such that $v'=vu$. Thus, we have to verify that
$\Lambda_{F/K}\left(f_i^{(L/L_0^{(F)})}(v)\right)=
\Lambda_{F/K}\left(f_i^{(L/L_0^{(F)})}(vu)\right)$. That is,
for each $1\leq j\in\mathbb Z$, we have to check that
\begin{equation}
\label{projection-lambda-equality}
\text{Pr}_{\widetilde{E}_j}\left(\Lambda_{F/K}(f_i^{(L/L_0^{(F)})}(v))\right)=
\text{Pr}_{\widetilde{E}_j}\left(\Lambda_{F/K}(f_i^{(L/L_0^{(F)})}(vu))\right).
\end{equation}
Now, for $j>i$, it follows that
\begin{equation*}
\begin{aligned}
\text{Pr}_{\widetilde{E}_j}\left(\Lambda_{F/K}(f_i^{(L/L_0^{(F)})}(v))\right)
&=
\widetilde{N}_{E_jL_0^{(F)}/E_j}\left(\text{Pr}_{\widetilde{E_jL_0^{(F)}}}
\left(\Lambda_{F/K}(f_i^{(L/L_0^{(F)})}(v))\right)\right)\\
&=\widetilde{N}_{E_jL_0^{(F)}/E_j}\left(\text{Pr}_{\widetilde{E_jL_0^{(F)}}}
\left(f_i^{(L/L_0^{(F)})}(v)\right)\right)\\
&=\widetilde{N}_{E_jL_0^{(F)}/E_j}\left(g_{j-1}^{(L/L_0^{(F)})}\circ\cdots
\circ g_i^{(L/L_0^{(F)})}(v)\right)\\
&=g_{j-1}^{(L/L_0^{(K)})}\circ\cdots\circ g_i^{(L/L_0^{(K)})}
\left(\widetilde{N}_{E_iL_0^{(F)}/E_i}(v)\right),
\end{aligned}
\end{equation*}
by the properties of the mappings $g_k^{(L/L_0^{(F)})}$ and 
$g_k^{(L/L_0^{(K)})}$.
Thus, eq. (\ref{projection-lambda-equality}) follows, as  
$\widetilde{N}_{E_iL_0^{(F)}/E_i}(v)=\widetilde{N}_{E_iL_0^{(F)}/E_i}(vu)$.
Therefore, the map
\begin{equation*}
f_i^{(L/L_0^{(K)})}: U_{\widetilde{E}_i}^{\sigma_i-1}
\rightarrow U_{\widetilde{\mathbb X}(L/K)}
\end{equation*}
is well-defined. Moreover, for $j>i$,
\begin{equation*}
\text{Pr}_{\widetilde{E}_j}\circ f_i^{(L/L_0^{(K)})}=
\left(g_{j-1}^{(L/L_0^{(K)})}\circ\cdots\circ g_i^{(L/L_0^{(K)})}\right)
\mid_{U_{\widetilde{E}_i}^{\sigma_i-1}}.
\end{equation*}
In fact, for $w\in U_{\widetilde{E}_i}^{\sigma_i-1}$,
there exists $v\in U_{\widetilde{E_iL_0^{(F)}}}^{\sigma^*_i-1}$ such that
$\widetilde{N}_{E_iL_0^{(F)}/E_i}(v)=w$, and 
$f_i^{(L/L_0^{(K)})}(w)=\Lambda_{F/K}\left(f_i^{(L/L_0^{(F)})}(v)\right)$. 
That is, the following square
\begin{equation}
\label{square-F/K}
\SelectTips{cm}{}\xymatrix{
{U_{\widetilde{E_iL_0^{(F)}}}^{\sigma^*_i-1}}\ar[r]^{f_i^{(L/L_0^{(F)})}}
\ar[d]_{\widetilde{N}_{E_iL_0^{(F)}/E_i}} & 
{U_{\widetilde{\mathbb X}(L/F)}}\ar[d]^{\Lambda_{F/K}} \\
{U_{\widetilde{E}_i}^{\sigma_i-1}}
\ar[r]^{f_i^{(L/L_0^{(K)})}} & {U_{\widetilde{\mathbb X}(L/K)}}
}
\end{equation}
is commutative. Thus,
\begin{equation*}
\begin{aligned}
\text{Pr}_{\widetilde{E}_j}\circ f_i^{(L/L_0^{(K)})}(w)&=
\text{Pr}_{\widetilde{E}_j}\circ\Lambda_{F/K}
\left(f_i^{(L/L_0^{(F)})}(v)\right)\\
&=\widetilde{N}_{E_jL_0^{(F)}/E_j}\left(\text{Pr}_{\widetilde{E_jL_0^{(F)}}}
\circ f_i^{(L/L_0^{(F)})}(v)\right)\\
&=\widetilde{N}_{E_jL_0^{(F)}/E_j}\left((g_{j-1}^{(L/L_0^{(F)})}\circ\cdots
\circ g_i^{(L/L_0^{(F)})})(v)\right)\\
&=\left(g_{j-1}^{(L/L_0^{(K)})}\circ\cdots\circ g_i^{(L/L_0^{(K)})}\right)
\left(\widetilde{N}_{E_iL_0^{(F)}/E_i}(v)\right),
\end{aligned}
\end{equation*}
by the properties of the mappings $g_k^{(L/L_0^{(F)})}$ and 
$g_k^{(L/L_0^{(K)})}$, which is the desired equality.

Now, we shall modify Lemma 5.30 of \cite{ikeda-serbest} and show that
the homomorphism $\Lambda_{F/K}:\widetilde{\mathbb X}(L/L_0^{(F)})^\times
\rightarrow\widetilde{\mathbb X}(L/L_0^{(K)})^\times$ introduced by 
eq.s (\ref{Lambda-map}) and (\ref{Lambda-map-definition}) satisfies the 
following lemma.
\begin{lemma}
The continuous homomorphism
$\Lambda_{F/K}:\widetilde{\mathbb X}(L/L_0^{(F)})^\times\rightarrow
\widetilde{\mathbb X}(L/L_0^{(K)})^\times$ introduced by eq.s 
(\ref{Lambda-map}) and (\ref{Lambda-map-definition}) further satisfies
\begin{itemize}
\item[(i)] 
$\Lambda_{F/K}\left(Z_{L/L_0^{(F)}}(\{K_iF,f_i^{(L/L_0^{(F)})}\})\right)
\subseteq Z_{L/L_0^{(K)}}\left(\{K_i,f_i^{(L/L_0^{(K)})}\}\right)$;
\item[(ii)]
$\Lambda_{F/K}(Y_{L/L_0^{(F)}})\subseteq Y_{L/L_0^{(K)}}$.
\end{itemize}
\end{lemma}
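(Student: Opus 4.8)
The plan is to mirror, almost verbatim, the proof of Lemma~\ref{Z-Y}, with the Coleman norm $\widetilde{\mathcal N}_{L/M}$ replaced by $\Lambda_{F/K}$ and the commutative square \eqref{square-L/M} replaced by \eqref{square-F/K}. The one structural simplification compared with Lemma~\ref{Z-Y}(ii) is that, by Remark~\ref{varphi-varphi}, the two towers $L/L_0^{(F)}$ and $L/L_0^{(K)}$ are compatible with \emph{the same} Lubin-Tate splitting $\varphi_{L_0^{(F)}}=\varphi_{L_0^{(K)}}=\varphi_K^d$, so no auxiliary Frobenius twist $\left<\varphi\right>$ is needed on the $\Lambda$-side.

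For part~(i), I would use that $Z_{L/L_0^{(F)}}(\{E_iL_0^{(F)},f_i^{(L/L_0^{(F)})}\})$ is the closure of the subgroup generated by $\bigcup_{i\geq 1}\text{im}(f_i^{(L/L_0^{(F)})})$, together with the continuity and multiplicativity of $\Lambda_{F/K}$ (eq.s \eqref{Lambda-map} and \eqref{Lambda-map-definition}): a convergent product $\prod_i z^{(i)}$ with $z^{(i)}\in\text{im}(f_i^{(L/L_0^{(F)})})$ is sent to $\prod_i\Lambda_{F/K}(z^{(i)})$, so it suffices to show that $\Lambda_{F/K}$ carries each $\text{im}(f_i^{(L/L_0^{(F)})})$ into $\text{im}(f_i^{(L/L_0^{(K)})})$. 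Writing $z^{(i)}=f_i^{(L/L_0^{(F)})}(v)$ with $v\in U_{\widetilde{E_iL_0^{(F)}}}^{\sigma_i^*-1}$, the commutativity of \eqref{square-F/K} gives $\Lambda_{F/K}(z^{(i)})=f_i^{(L/L_0^{(K)})}\bigl(\widetilde N_{E_iL_0^{(F)}/E_i}(v)\bigr)\in\text{im}(f_i^{(L/L_0^{(K)})})$, which is exactly what is needed.

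For part~(ii), I would first observe that $\Lambda_{F/K}$ commutes with the action of $\varphi_K$, hence of $\varphi_K^d$, since it is assembled from the norm map $\widetilde N_{L_0^{(F)}/L_0^{(K)}}$ and coordinate projections, all of which are $\varphi_K$-equivariant. Then, for $y\in Y_{L/L_0^{(F)}}$ one has $y^{1-\varphi_K^d}\in Z_{L/L_0^{(F)}}$ by definition, so by part~(i) $\Lambda_{F/K}(y)^{1-\varphi_K^d}=\Lambda_{F/K}(y^{1-\varphi_K^d})\in Z_{L/L_0^{(K)}}$; since moreover $\Lambda_{F/K}(y)\in U_{\widetilde{\mathbb X}(L/K)}^\diamond$ (this is precisely how $\lambda_{F/K}$ was built, see eq.s \eqref{lambda-map} and \eqref{lambda-map-definition}), the defining property of $Y_{L/L_0^{(K)}}$ forces $\Lambda_{F/K}(y)\in Y_{L/L_0^{(K)}}$. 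The only delicate point — and it is a minor bookkeeping one — is to ensure in part~(i) that an element of $\text{im}(f_i^{(L/L_0^{(F)})})$ is represented through a legitimate preimage $v$ so that the square \eqref{square-F/K} applies; this is guaranteed by the well-definedness discussion for $f_i^{(L/L_0^{(K)})}$ (independence of the choice of $\widetilde N_{E_iL_0^{(F)}/E_i}$-preimage) carried out just before the lemma.
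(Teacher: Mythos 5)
Your proof is correct and follows essentially the same route as the paper: part (i) via continuity/multiplicativity of $\Lambda_{F/K}$ together with the commutative square \eqref{square-F/K}, and part (ii) by applying part (i) to $y^{1-\varphi^d}$ and invoking Remark \ref{varphi-varphi} (the identity $\varphi_F^{d'}=\varphi_K^d$) plus the defining property of $Y_{L/L_0^{(K)}}$. Your explicit remarks on Frobenius-equivariance of $\Lambda_{F/K}$ and on $\Lambda_{F/K}(y)$ lying in $U^\diamond_{\widetilde{\mathbb X}(L/L_0^{(K)})}$ only spell out steps the paper leaves implicit.
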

\begin{proof} 
\begin{itemize}
\item[(i)]
For any choice of $z^{(i)}\in Z_i^{(L/L_0^{(F)})}$, the continuity of the 
multiplicative arrow 
$\Lambda_{F/K}:\widetilde{\mathbb X}(L/L_0^{(F)})^\times\rightarrow
\widetilde{\mathbb X}(L/L_0^{(K)})^\times$ yields
\begin{equation*}
\Lambda_{F/K}\left(\prod_iz^{(i)}\right)=\prod_i\Lambda_{F/K}(z^{(i)}),
\end{equation*}
where $\Lambda_{F/K}(z^{(i)})\in Z_i^{(L/L_0^{(K)})}$ by the 
commutative square (\ref{square-F/K}).
\item[(ii)]
Now let $y\in Y_{L/L_0^{(F)}}$. Then, 
$y^{1-\varphi_F^{d'}}\in Z_{L/L_0^{(F)}}(\{K_iF,f_i^{(L/L_0^{(F)})}\})$. 
Thus,
$\Lambda_{F/K}(y^{1-\varphi_F^{d'}})
=\Lambda_{F/K}(y)^{1-\varphi_F^{d'}}\in Z_{L/L_0^{(K)}}
\left(\{K_i,f_i^{(L/L_0^{(K)})}\}\right)$ by part (i). 
Now the result follows, as $\varphi_F^{d'}=\varphi_K^d$ by Remark 
\ref{varphi-varphi}.
\end{itemize}
\end{proof}
Thus, the homomorphism
$\Lambda_{F/K}:\widetilde{\mathbb X}(L/L_0^{(F)})^\times\rightarrow
\widetilde{\mathbb X}(L/L_0^{(K)})^\times$ of eq. (\ref{Lambda-map}) 
defined by eq. (\ref{Lambda-map-definition}) induces a group homomorphism,
\begin{equation}
\label{lambda-map-2}
\lambda_{F/K}:
U_{\widetilde{\mathbb X}(L/L_0^{(F)})}^\diamond/Y_{L/L_0^{(F)}}
\rightarrow U_{\widetilde{\mathbb X}(L/L_0^{(K)})}^\diamond/Y_{L/L_0^{(K)}}
\end{equation}
and defined by
\begin{equation}
\label{lambda-map-definition-2}
\lambda_{F/K}(\overline{U})=
\Lambda_{F/K}(U).Y_{L/L_0^{(K)}},
\end{equation}
for every $U\in U_{\widetilde{\mathbb X}(L/L_0^{(F)})}^\diamond$, where
$\overline{U}$ denotes, as usual, the coset $U.Y_{L/L_0^{(F)}}$ in
$U_{\widetilde{\mathbb X}(L/L_0^{(F)})}^\diamond/Y_{L/L_0^{(F)}}$.

Let
\begin{equation*}
\pmb{\Phi}_{L/F}^{(\varphi_F)}:\text{Gal}(L/F)\rightarrow
F^\times/N_{L_0^{(F)}/F}{L_0^{(F)}}^\times\times 
U_{\widetilde{\mathbb X}{(L/L_0^{(F)})}}^\diamond/Y_{L/L_0^{(F)}}
\end{equation*}
be the corresponding generalized Fesenko reciprocity map defined for the 
extension $L/F$, where $Y_{L/L_0^{(F)}}=Y_{L/L_0^{(F)}}
\left(\{K_iF,f_i^{(L/L_0^{(F)})}\}\right)$. 

The following lemma is the finer version of Lemma \ref{square2-preliminary}.
\begin{lemma}
\label{square4-preliminary}
Let $K$ be a local field satisfying the condition given in eq. 
(\ref{rootofunity}).
Let $F/K$ be a finite sub-extension of $L/K$. Fix a Lubin-Tate splitting
$\varphi_F$ over $F$. Assume that the residue-class degree 
$[\kappa_L:\kappa_F]=d'$ and $F\subset L\subset F_{(\varphi_F)^{d'}}$ for
some $d'\mid d$. Then the square
\begin{equation}
\label{square-F/K-Y-preliminary}
\SelectTips{cm}{}\xymatrix{
{\text{Gal}(L/L_0^{(F)})}\ar[r]^-{\Phi_{L/L_0^{(F)}}^{(\varphi_K^d)}}
\ar[d]_{\text{inc.}} & 
{U_{\widetilde{\mathbb X}{(L/L_0^{(F)})}}^\diamond/Y_{L/L_0^{(F)}}}
\ar[d]^{\lambda_{F/K}} \\
{\text{Gal}(L/L_0^{(K)})}\ar[r]^-{\Phi_{L/L_0^{(K)}}^{(\varphi_K^d)}} & 
{U_{\widetilde{\mathbb X}{(L/L_0^{(K)})}}^\diamond/Y_{L/L_0^{(K)}}},
}
\end{equation}
where the right-vertical arrow
\begin{equation*}
\lambda_{F/K}:U_{\widetilde{\mathbb X}{(L/L_0^{(F)})}}^\diamond/Y_{L/L_0^{(F)}}
\rightarrow U_{\widetilde{\mathbb X}{(L/L_0^{(K)})}}^\diamond/Y_{L/L_0^{(K)}}
\end{equation*}
is defined by eq.s (\ref{lambda-map-2}) and (\ref{lambda-map-definition-2}), 
is commutative.
\end{lemma}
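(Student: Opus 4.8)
The plan is to mimic the passage from Lemma~\ref{square1-preliminary} to Lemma~\ref{square3-preliminary}, reducing the claim to the compatibility at the level of $U_{\mathbb X}$-quotients already recorded in Lemma~\ref{square2-preliminary}. Recall that, by Remark~\ref{varphi-varphi}, $\varphi_F^{d'}=\varphi_K^d$, so $L/L_0^{(F)}$ and $L/L_0^{(K)}$ are both totally-ramified $APF$-extensions sitting inside the $\varphi_K^d$-tower, and Fesenko's reciprocity map for each of them factors through the Fesenko arrow $\phi$ followed by the canonical projection $c$ induced by the inclusion $U_{\mathbb X}\subseteq Y$; that is, $\Phi_{L/L_0^{(F)}}^{(\varphi_K^d)}=c_{L/L_0^{(F)}}\circ\phi_{L/L_0^{(F)}}^{(\varphi_K^d)}$ and $\Phi_{L/L_0^{(K)}}^{(\varphi_K^d)}=c_{L/L_0^{(K)}}\circ\phi_{L/L_0^{(K)}}^{(\varphi_K^d)}$.

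First I would observe that the square whose horizontal edges are the projections $c_{L/L_0^{(F)}}$ and $c_{L/L_0^{(K)}}$ and whose two vertical edges are $\lambda_{F/K}$ (from the $U_{\mathbb X}$-quotients on the left, from the $Y$-quotients on the right) is commutative. This is immediate: both copies of $\lambda_{F/K}$ are induced by the single continuous homomorphism $\Lambda_{F/K}:\widetilde{\mathbb X}(L/L_0^{(F)})^\times\to\widetilde{\mathbb X}(L/L_0^{(K)})^\times$ of eq.~\eqref{Lambda-map}, which carries $U_{\widetilde{\mathbb X}(L/L_0^{(F)})}^\diamond$ into $U_{\widetilde{\mathbb X}(L/L_0^{(K)})}^\diamond$, carries $U_{\mathbb X(L/L_0^{(F)})}$ into $U_{\mathbb X(L/L_0^{(K)})}$ (as recorded in the discussion preceding Lemma~\ref{square2-preliminary}), and carries $Y_{L/L_0^{(F)}}$ into $Y_{L/L_0^{(K)}}$ by part~(ii) of the lemma proved immediately above. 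Since $\Lambda_{F/K}$ respects all three subgroups simultaneously and the horizontal arrows are the quotient maps, the induced square of quotient maps commutes on the nose.

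Then I would paste this square onto the right-hand edge of the commutative square \eqref{square-F/K-Y-preliminary}'s $U_{\mathbb X}$-analogue \eqref{square-F/K-U-preliminary} supplied by Lemma~\ref{square2-preliminary} (whose right edge is $\lambda_{F/K}$ between the $U_{\mathbb X}$-quotients), obtaining
\begin{align*}
\lambda_{F/K}\circ\Phi_{L/L_0^{(F)}}^{(\varphi_K^d)}
&=\lambda_{F/K}\circ c_{L/L_0^{(F)}}\circ\phi_{L/L_0^{(F)}}^{(\varphi_K^d)}
=c_{L/L_0^{(K)}}\circ\lambda_{F/K}\circ\phi_{L/L_0^{(F)}}^{(\varphi_K^d)}\\
&=c_{L/L_0^{(K)}}\circ\phi_{L/L_0^{(K)}}^{(\varphi_K^d)}\circ(\text{inc.})
=\Phi_{L/L_0^{(K)}}^{(\varphi_K^d)}\circ(\text{inc.}),
\end{align*}
which is precisely the commutativity of \eqref{square-F/K-Y-preliminary}.

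The only genuine content is the inclusion $\Lambda_{F/K}(Y_{L/L_0^{(F)}})\subseteq Y_{L/L_0^{(K)}}$, part~(ii) of the preceding lemma, which I expect to be the main obstacle; it rests in turn on part~(i) — that $\Lambda_{F/K}$ sends the relevant $Z$-groups into one another — which follows from the commutativity of the square \eqref{square-F/K} relating the maps $f_i^{(L/L_0^{(F)})}$ and $f_i^{(L/L_0^{(K)})}$, together with the identity $\varphi_F^{d'}=\varphi_K^d$ of Remark~\ref{varphi-varphi}. Everything else is formal diagram-chasing, entirely parallel to the argument establishing Lemma~\ref{square3-preliminary} from Lemma~\ref{square1-preliminary}.
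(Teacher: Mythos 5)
Your argument is correct and is essentially the paper's own proof: the paper likewise reduces the claim to the (obvious) commutativity of the square formed by the canonical projections $c_{L/L_0^{(F)}}$, $c_{L/L_0^{(K)}}$ and the two maps $\lambda_{F/K}$, and then pastes this with the square of Lemma \ref{square2-preliminary}, the passage to the $Y$-quotients being licensed by $\Lambda_{F/K}(Y_{L/L_0^{(F)}})\subseteq Y_{L/L_0^{(K)}}$ from the preceding lemma. Your identification of that inclusion (resting on the commutative square (\ref{square-F/K}) and Remark \ref{varphi-varphi}) as the only genuine content matches the paper's structure exactly.
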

\begin{proof}
It suffices to prove that the square
\begin{equation*}
\SelectTips{cm}{}\xymatrix{
{U_{\widetilde{\mathbb X}(L/L_0^{(F)})}^\diamond/U_{\mathbb X(L/L_0^{(F)})}}
\ar[r]^{\text{can.}}\ar[d]_{\lambda_{F/K}} & 
{U_{\widetilde{\mathbb X}(L/L_0^{(F)})}^\diamond/Y_{L/L_0^{(F)}}}
\ar[d]^{\lambda_{F/K}} \\
{U_{\widetilde{\mathbb X}(L/L_0^{(K)})}^\diamond/U_{\mathbb X(L/L_0^{(K)})}}
\ar[r]^{\text{can.}} & 
{U_{\widetilde{\mathbb X}(L/L_0^{(K)})}^\diamond/Y_{L/L_0^{(K)}}}
}
\end{equation*}
is commutative, which is obvious. Then pasting this square with the 
square eq. (\ref{square-F/K-U-preliminary}) as
\begin{equation*}
\SelectTips{cm}{}\xymatrix{
{Gal(L/L_0^{(F)})}\ar[r]^-{\Phi_{L/L_0^{(F)}}^{(\varphi_K^d)}}
\ar[d]_{\text{inc.}} & 
{U_{\widetilde{\mathbb X}(L/L_0^{(F)})}^\diamond/U_{\mathbb X(L/L_0^{(F)})}}
\ar[r]^{\text{can.}}\ar[d]^{\lambda_{L/F}} & 
{U_{\widetilde{\mathbb X}(L/L_0^{(F)})}^\diamond/Y_{L/L_0^{(F)}}}
\ar[d]^{\lambda_{L/F}} \\
{Gal(L/L_0^{(K)})}\ar[r]^-{\Phi_{L/L_0^{(K)}}^{(\varphi_K^d)}} & 
{U_{\widetilde{\mathbb X}(L/L_0^{(K)})}^\diamond/U_{\mathbb X(L/L_0^{(K)})}}
\ar[r]^{\text{can.}} & 
{U_{\widetilde{\mathbb X}(L/L_0^{(K)})}^\diamond/Y_{L/L_0^{(K)}}}
}
\end{equation*}
the commutativity of the square eq. (\ref{square-F/K-Y-preliminary}) follows. 
\end{proof}
Thus, we have the following theorem, which is the finer version 
of Theorem \ref{square2}.
\begin{theorem}
\label{square4}
Let $K$ be a local field satisfying the condition given in eq. 
(\ref{rootofunity}).
Let $F/K$ be a finite sub-extension of $L/K$. Fix a 
Lubin-Tate splitting $\varphi_F$ over $F$. Assume that the residue-class
degree $[\kappa_L : \kappa_F]=d'$ 
and $F\subset L\subset F_{(\varphi_F)^{d'}}$ for some $d'\mid d$. 
Then the square
\begin{equation}
\label{square-F/K-Y}
\SelectTips{cm}{}\xymatrix{
{\text{Gal}(L/F)}\ar[r]^-{\pmb{\Phi}_{L/F}^{(\varphi_F)}}
\ar[d]_{\text{inc.}} & 
{F^\times/N_{L_0^{(F)}/F}{L_0^{(F)}}^\times
\times
U_{\widetilde{\mathbb X}{(L/F)}}^\diamond
/Y_{L/L_0^{(F)}}}\ar[d]^{(N_{F/K},\lambda_{F/K})} \\
{\text{Gal}(L/K)}\ar[r]^-{\pmb{\Phi}_{L/K}^{(\varphi_K)}} & 
{K^\times/N_{L_0^{(K)}/K}{L_0^{(K)}}^\times
\times
U_{\widetilde{\mathbb X}{(L/K)}}^\diamond
/Y_{L/L_0^{(K)}}},
}
\end{equation}
where the right-vertical arrow
\begin{multline*}
(N_{F/K},\lambda_{F/K}) :
F^\times/N_{L_0^{(F)}/F}{L_0^{(F)}}^\times
\times
U_{\widetilde{\mathbb X}(L/F)}^\diamond/Y_{L/L_0^{(F)}}
\rightarrow \\
K^\times/N_{L_0^{(K)}/K}{L_0^{(K)}}^\times
\times
U_{\widetilde{\mathbb X}{(L/K)}}^\diamond
/Y_{L/L_0^{(K)}}
\end{multline*}
defined by
\begin{equation*}
(N_{F/K},\lambda_{F/K}) :
(\overline{a},\overline{U})\mapsto\left(\overline{N_{F/K}(a)},
\lambda_{F/K}(\overline{U})\right),
\end{equation*}
for every $(\overline{a},\overline{U})\in 
{F^\times/N_{L_0^{(F)}/F}{L_0^{(F)}}^\times}\times
U_{\widetilde{\mathbb X}(L/F)}^\diamond/Y_{L/L_0^{(F)}}$, 
is commutative.
\end{theorem}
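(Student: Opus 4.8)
The plan is to run exactly the argument that proves Theorem~\ref{square2}, but with the coarser quotient $U_{\mathbb X}$ replaced by $Y$ throughout and with the finer preliminary Lemma~\ref{square4-preliminary} invoked in place of Lemma~\ref{square2-preliminary}. So I would fix an arbitrary $\sigma\in\text{Gal}(L/F)$ and let $0\leq m\in\mathbb Z$ be the unique integer with $\sigma\mid_{L_0^{(F)}}=\varphi_F^m$, so that $\varphi_F^{-m}\sigma\in\text{Gal}(L/L_0^{(F)})$. Unwinding the definition of the generalized Fesenko reciprocity map and using $\varphi_{L_0^{(F)}}=\varphi_F^{d'}=\varphi_K^{d}$ (Remark~\ref{varphi-varphi}), this gives
\begin{equation*}
\pmb{\Phi}_{L/F}^{(\varphi_F)}(\sigma)=\left(\pi_F^m.N_{L_0^{(F)}/F}{L_0^{(F)}}^\times,\ \Phi_{L/L_0^{(F)}}^{(\varphi_K^d)}(\varphi_F^{-m}\sigma)\right).
\end{equation*}

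Next I would apply the right-vertical arrow $(N_{F/K},\lambda_{F/K})$ in each coordinate. On the first coordinate the norm-compatibility $N_{F/K}(\pi_F)=\pi_K$ of the fixed Lubin-Tate labelling sends $\pi_F^m.N_{L_0^{(F)}/F}{L_0^{(F)}}^\times$ to $\pi_K^m.N_{L_0^{(K)}/K}{L_0^{(K)}}^\times$; on the second coordinate the commutativity of the square in Lemma~\ref{square4-preliminary} gives $\lambda_{F/K}\left(\Phi_{L/L_0^{(F)}}^{(\varphi_K^d)}(\varphi_F^{-m}\sigma)\right)=\Phi_{L/L_0^{(K)}}^{(\varphi_K^d)}(\varphi_F^{-m}\sigma)$, where $\varphi_F^{-m}\sigma$ on the right is viewed inside $\text{Gal}(L/L_0^{(K)})$ through the inclusion $\text{Gal}(L/L_0^{(F)})\hookrightarrow\text{Gal}(L/L_0^{(K)})$. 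Hence
\begin{equation*}
(N_{F/K},\lambda_{F/K})\left(\pmb{\Phi}_{L/F}^{(\varphi_F)}(\sigma)\right)=\left(\pi_K^m.N_{L_0^{(K)}/K}{L_0^{(K)}}^\times,\ \Phi_{L/L_0^{(K)}}^{(\varphi_K^d)}(\varphi_F^{-m}\sigma)\right).
\end{equation*}

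It then remains to identify the right-hand side with $\pmb{\Phi}_{L/K}^{(\varphi_K)}(\sigma)$. For this I would take the unique $0\leq m'\in\mathbb Z$ with $\sigma\mid_{L_0^{(K)}}=\varphi_K^{m'}$ and $\varphi_K^{-m'}\sigma\in\text{Gal}(L/L_0^{(K)})$; by Remark~\ref{directproduct-diagram}(ii) one has $\varphi_F^m\mid_{L_0^{(K)}}=\varphi_K^{m'}$ and $\varphi_F^{-m}\sigma=\varphi_K^{-m'}\sigma$ in $\text{Gal}(L/L_0^{(K)})$, while abelian local class field theory for $L_0^{(K)}/K$ gives $\pi_K^m.N_{L_0^{(K)}/K}{L_0^{(K)}}^\times=\pi_K^{m'}.N_{L_0^{(K)}/K}{L_0^{(K)}}^\times$. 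Substituting these two identities turns the displayed pair into $\left(\pi_K^{m'}.N_{L_0^{(K)}/K}{L_0^{(K)}}^\times,\ \Phi_{L/L_0^{(K)}}^{(\varphi_K^d)}(\varphi_K^{-m'}\sigma)\right)$, which is exactly $\pmb{\Phi}_{L/K}^{(\varphi_K)}(\sigma)$ by definition, so the square commutes.

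I do not expect any real obstruction in the theorem itself: once Lemma~\ref{square4-preliminary} is in hand, the argument is pure bookkeeping with the direct-product decomposition (\ref{directproduct}), the behaviour of the norm on the fixed Lubin-Tate labelling, and abelian local class field theory for $L_0^{(F)}/F$ versus $L_0^{(K)}/K$. The only genuinely substantive ingredient is Lemma~\ref{square4-preliminary}, whose content lies in the compatibility of the maps $f_i^{(L/L_0^{(F)})}$ and $f_i^{(L/L_0^{(K)})}$ with $\Lambda_{F/K}$ — i.e.\ the commutative square~(\ref{square-F/K}) — together with the containment $\Lambda_{F/K}\left(Y_{L/L_0^{(F)}}\right)\subseteq Y_{L/L_0^{(K)}}$; since that lemma is already available, the present proof is a short formal consequence.
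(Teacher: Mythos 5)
Your proposal is correct and follows essentially the same route as the paper's own proof: decompose $\sigma$ via $\sigma\mid_{L_0^{(F)}}=\varphi_F^m$, apply $(N_{F/K},\lambda_{F/K})$ coordinatewise using the norm-compatibility of the Lubin-Tate labelling and Lemma~\ref{square4-preliminary}, and then pass from $m$ to $m'$ via Remark~\ref{directproduct-diagram}(ii) and abelian local class field theory. You also correctly locate the only substantive input in Lemma~\ref{square4-preliminary}, exactly as in the paper.
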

\begin{proof}
Let $\sigma\in\text{Gal}(L/F)$. There exists $0\leq m\in\mathbb Z$ such that
$\sigma\mid_{L_0^{(F)}}=\varphi_F^m$ and $\varphi_F^{-m}\sigma\in
\text{Gal}(L/L_0^{(F)})$. Now,
\begin{equation*}
\pmb{\Phi}_{L/F}^{(\varphi_F)}(\sigma)=
\left(\pi_F^m.N_{L_0^{(F)}/F}{L_0^{(F)}}^\times,\Phi_{L/L_0^{(F)}}
^{(\varphi_K^d)}(\varphi_F^{-m}\sigma)\right)
\end{equation*}
and
\begin{equation*}
(N_{F/K},\lambda_{F/K})(\pmb{\Phi}_{L/F}^{(\varphi_F)}(\sigma))=
\left(\pi_K^m.N_{L_0^{(K)}/K}{L_0^{(K)}}^\times,\Phi_{L/L_0^{(K)}}
^{(\varphi_K^d)}(\varphi_F^{-m}\sigma)\right)
\end{equation*}
by the norm-compatibility of primes in the fixed Lubin-Tate labelling and by
Lemma \ref{square4-preliminary}. Now, there exists $0\leq m'\in\mathbb Z$ 
such that $\sigma\mid_{L_0^{(K)}}=\varphi_K^{m'}$ and 
$\varphi_K^{-m'}\sigma\in\text{Gal}(L/L_0^{(K)})$. 
By Remark \ref{directproduct-diagram} part (ii), it follows that
$\varphi_F^m\mid_{L_0^{(K)}}=\varphi_K^{m'}$ and $\varphi_F^{-m}\sigma=
\varphi_K^{-m'}\sigma$. By abelian local class field theory,
$N_{F/K} : \pi_F^m N_{L_0^{(F)}/F}{L_0^{(F)}}^\times\mapsto
\pi_K^{m'}N_{L_0^{(K)}/K}{L_0^{(K)}}^\times
=\pi_K^m.N_{L_0^{(K)}/K}{L_0^{(K)}}^\times $. Thus,
\begin{equation*}
\begin{aligned}
(N_{F/K},\lambda_{F/K})(\pmb{\Phi}_{L/F}^{(\varphi_F)}(\sigma)) &=
\left(\pi_K^m.N_{L_0^{(K)}/K}{L_0^{(K)}}^\times,\Phi_{L/L_0^{(K)}}
^{(\varphi_K^d)}(\varphi_F^{-m}\sigma)\right) \\
& =\left(\pi_K^{m'}.N_{L_0^{(K)}/K}{L_0^{(K)}}^\times,\Phi_{L/L_0^{(K)}}
^{(\varphi_K^d)}(\varphi_K^{-m'}\sigma)\right) \\
&=\pmb{\Phi}_{L/K}^{(\varphi_K)}(\sigma),
\end{aligned}
\end{equation*} 
which completes the proof.
\end{proof}
Finally, the inverse $\pmb{H}_{L/K}^{(\varphi)}
=(\pmb{\Phi}_{L/K}^{(\varphi)})^{-1}$ of the generalized 
Fesenko reciprocity map $\pmb{\Phi}_{L/K}^{(\varphi)}$ defined for 
the extension $L/K$ is the generalization of the Hazewinkel map for 
infinite $APF$-Galois sub-extensions $L/K$ of $K_{\varphi^d}/K$
satisfying $[\kappa_L:\kappa_K]=d$ and under the assumption that the 
local field $K$ satisfies the condition 
given by eq. (\ref{rootofunity}). More precisely, we have the following
proposition.
\begin{proposition}
The following square
\begin{equation*}
\SelectTips{cm}{}\xymatrix{
{K^\times/N_{L_0/K}L_0^\times\times U_{\widetilde{\mathbb X}{(L/K)}}
^\diamond/Y_{L/L_0}}
\ar[r]^-{\pmb{H}_{L/K}^{(\varphi)}}\ar[d]_{(\text{id}_{K^\times/N_{L_0/K}L_0
^\times},\text{Pr}_{\widetilde{K}})} & 
{\text{Gal}(L/K)}\ar[d]^{\txt{mod \text{Gal}(L/K)}'} \\
{K^\times/N_{L_0/K}L_0^\times\times U_{L_0}/N_{L/L_0}U_L}\ar[r]^-{h_{L/K}} & 
{\text{Gal}(L/K)^{ab}}
}
\end{equation*}
is commutative, where
\begin{equation*}
h_{L/K}: K^\times/N_{L_0/K}L_0^\times\times U_{L_0}/N_{L/L_0}U_L\rightarrow
\text{Gal}(L/K)^{ab}
\end{equation*}
is the Hazewinkel map of $L/K$.
\end{proposition}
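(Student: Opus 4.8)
The plan is to reduce the commutativity of the square to the corresponding statement for the totally ramified extension $L/L_0$ --- where it is the well-known compatibility between Fesenko's extended Hazewinkel map and the classical Hazewinkel map --- and then to account for the unramified contribution by the functoriality of abelian local class field theory along the tower $L/L_0/K$.

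First I would unwind both composites. Fix $\sigma\in\text{Gal}(L/K)$ and write $\sigma\mid_{L_0}=\varphi^m\mid_{L_0}$ with $0\leq m\in\mathbb Z$, so that $\tau:=\varphi^{-m}\sigma\in\text{Gal}(L/L_0)$ and $\sigma=\varphi^m\mid_L\,\tau$. Since $\pmb H_{L/K}^{(\varphi)}=(\pmb\Phi_{L/K}^{(\varphi)})^{-1}$ by Theorem~\ref{cocycle2} and Lemma~\ref{bijection}, it is equivalent to prove that the lower-left path of the square, applied to $\pmb\Phi_{L/K}^{(\varphi)}(\sigma)=(\pi_K^m N_{L_0/K}L_0^\times,\ \Phi_{L/L_0}^{(\varphi^d)}(\tau))$, returns $\overline\sigma\in\text{Gal}(L/K)^{ab}$. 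Writing $\Phi_{L/L_0}^{(\varphi^d)}(\tau)=U_\tau.Y_{L/L_0}$ with $U_\tau^{1-\varphi^d}=\Pi_{\varphi^d;L/L_0}^{\tau-1}$ and $\text{Pr}_{\widetilde K}(U_\tau)=\text{Pr}_{\widetilde{L_0}}(U_\tau)\in U_{L_0}$ (Definition~\ref{diamond-group-L/K}; here one also needs $\text{Pr}_{\widetilde K}(Y_{L/L_0})\subseteq N_{L/L_0}U_L$ for the left vertical arrow to be well defined, which is part of the construction of $Y_{L/L_0}$ in \cite{ikeda-serbest}), this amounts to the identity
\begin{equation*}
h_{L/K}\bigl(\pi_K^m N_{L_0/K}L_0^\times,\ \text{Pr}_{\widetilde{L_0}}(U_\tau)\,N_{L/L_0}U_L\bigr)=\overline{\varphi^m\mid_L}\cdot\overline\tau\qquad\text{in }\ \text{Gal}(L/K)^{ab}.
\end{equation*}

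The Hazewinkel map $h_{L/K}$ is a homomorphism, and in the coordinates of the statement it is the composite of the classical norm-residue inverse $K^\times/N_{L/K}L^\times\to\text{Gal}(L/K)^{ab}$ with the homomorphism $(\pi_K^m N_{L_0/K}L_0^\times,\,uN_{L/L_0}U_L)\mapsto \pi_K^m N_{L_0/K}(u)\bmod N_{L/K}L^\times$, so it splits into an ``unramified'' and a ``ramified'' factor. For the ramified factor, abelian class field theory for the tower $L/L_0/K$ identifies $h_{L/K}(1,\ \text{Pr}_{\widetilde{L_0}}(U_\tau)N_{L/L_0}U_L)$ with the image under $\text{Gal}(L/L_0)^{ab}\hookrightarrow\text{Gal}(L/K)^{ab}$ (the map induced by inclusion, which corresponds to $N_{L_0/K}$ on the norm-group side) of $h_{L/L_0}(\text{Pr}_{\widetilde{L_0}}(U_\tau)N_{L/L_0}U_L)$; by the $d=1$ case of the present proposition, which applies to $L/L_0$ by Remark~\ref{key-remark} and is recorded in \cite{ikeda-serbest}, together with $H_{L/L_0}^{(\varphi^d)}\circ\Phi_{L/L_0}^{(\varphi^d)}=\text{id}$, this equals $\overline\tau\in\text{Gal}(L/L_0)^{ab}$. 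For the unramified factor, functoriality of abelian local class field theory along $L_0/K$ and the fact that $\varphi$ restricts to the Frobenius on $K^{nr}$ give $h_{L/K}(\pi_K^m N_{L_0/K}L_0^\times,1)=(\varphi^m\mid_{L_0},1)$, the image of $\varphi^m\mid_L$ under the splitting section of (\ref{directproduct}) that sends $\varphi\mid_{L_0}$ to $\varphi\mid_L$. Because (\ref{directproduct}) is an isomorphism of groups, $\text{Gal}(L/K)^{ab}\simeq\text{Gal}(L_0/K)\times\text{Gal}(L/L_0)^{ab}$ with $\varphi^m\mid_L\mapsto(\varphi^m\mid_{L_0},1)$ and $\tau\mapsto(1,\overline\tau)$; multiplying the two factors yields the displayed identity, hence the commutativity of the square.

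The step that I expect to need the most care is the identification, in the paragraph above, of $h_{L/K}$ in the coordinates of the statement with the product of $h_{L_0/K}$ and the inclusion of $h_{L/L_0}$ --- equivalently, that the presentation $K^\times/N_{L/K}L^\times\simeq K^\times/N_{L_0/K}L_0^\times\times U_{L_0}/N_{L/L_0}U_L$ is the one dual to the group decomposition (\ref{directproduct}); this is the functoriality of the norm-residue symbol for $L/L_0/K$ (injectivity of $N_{L_0/K}$ on $U_{L_0}/N_{L/L_0}U_L$ modulo $N_{L/K}L^\times$, and splitting of the unramified quotient). Finally, for infinite $L/K$ one passes to the projective limit over the finite Galois subextensions $F/K$ with $L_0\subseteq F\subseteq K_{\varphi^d}$ and $[\kappa_F:\kappa_K]=d$: all the arrows in the square are compatible with restriction to such $F$ --- on the reciprocity side by the finite analogue of Theorem~\ref{square3}, on the Hazewinkel side by abelian class field theory --- and for finite $F/K$ the required identity can alternatively be read off directly from the Iwasawa--Neukirch description $\rho\circ\pmb\phi_{F/K}^{(\varphi)}=\iota_{F/K}$ established above, combined with the classical agreement of the Iwasawa--Neukirch and Hazewinkel maps.
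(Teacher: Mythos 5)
Your argument is correct, and it is worth noting that the paper itself gives \emph{no} proof of this final proposition (it is stated immediately before the bibliography), so what you have written is essentially the argument the authors leave implicit rather than an alternative to it. Your reduction is the natural and intended one: since by definition $\pmb{H}_{L/K}^{(\varphi)}(\pi_K^m N_{L_0/K}L_0^\times,\overline U)=\varphi^m\mid_L H_{L/L_0}^{(\varphi^d)}(\overline U)$ and, by the isomorphism (\ref{directproduct}), $\text{Gal}(L/K)^{ab}\simeq\text{Gal}(L_0/K)\times\text{Gal}(L/L_0)^{ab}$ with $\varphi^m\mid_L\mapsto(\varphi^m\mid_{L_0},1)$, the commutativity splits into an unramified statement (abelian class field theory for $L_0/K$, $\pi_K\mapsto\varphi\mid_{L_0}$) and the totally ramified statement that $\text{Pr}_{\widetilde K}$ intertwines $H_{L/L_0}^{(\varphi^d)}$ with the classical Hazewinkel map $h_{L/L_0}$, i.e. the $d=1$ case recorded in \cite{ikeda-serbest}, exactly as you argue via $\pmb{H}_{L/K}^{(\varphi)}=(\pmb{\Phi}_{L/K}^{(\varphi)})^{-1}$. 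The two points you flag are indeed the only delicate ones, and neither is addressed in the paper: first, $\text{Pr}_{\widetilde K}(Y_{L/L_0})\subseteq N_{L/L_0}U_L$ is needed merely for the left vertical arrow of the square to be well defined; second, since $h_{L/K}$ is never defined in the paper, one must check that your description of it through $K^\times/N_{L/K}L^\times$ (modelled on the map $\rho$ used earlier for the Iwasawa--Neukirch comparison) agrees with the product of the unramified reciprocity map and $h_{L/L_0}$ --- this works precisely because $N_{L_0/K}$ is surjective on units, $\pi_K^d$ lies in $N_{F/K}F^\times$ for every finite subextension $F/K$ of $L/K$, and $\text{Gal}(L/K)$ is the direct product (\ref{directproduct}), with the infinite case read, as in your closing paragraph, as a projective limit over finite subextensions. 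With those two points made explicit your proof is complete; the only cosmetic caution is that, as with the paper's own $\rho$, the formula for $h_{L/K}$ must be understood on the representatives $\pi_K^m$ of $K^\times/N_{L_0/K}L_0^\times$, which is legitimate since $\pi_K^{m}$ and $\pi_K^{m'}$ represent the same class only when $d\mid m-m'$, in which case they agree modulo $N_{L/K}L^\times$.
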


${}$

\noindent
\textsc{Department of Mathematics, Istanbul Bilgi University, 
Kurtulu\c s Deresi Cad. No. 47, Dolapdere, 34440 Beyo\v glu, Istanbul, Turkey}

\noindent
E-mail : \texttt{ilhan$@$bilgi.edu.tr}

\noindent
\textsc{G\"um\"u\c s Pala Mahallesi, G\"um\"u\c s Sok., \"Oz Aksu Sitesi, 
C-2/39, 34160 Avc\i lar, Istanbul, TURKEY}

\noindent
E-mail : \texttt{erols73$@$yahoo.com}

\end{document}